\definecolor{linkred}{rgb}{0.6,0,0}
\definecolor{linkblue}{rgb}{0,0,0.6}
\definecolor{orange}{rgb}{1,0.6,0}
\newtheorem{thm}{Theorem}
\newtheorem*{repeatthm}{Theorem~\ref{th:systole}}
\newtheorem*{repeatthm2}{Theorem~\ref{th:main}}
\newtheorem*{repeatthm3}{Theorem~\ref{th:asymp}}
\newtheorem{dfn}{Definition}	
\theoremstyle{remark}
\newtheorem{remark}{Remark}
\theoremstyle{plain}
\newtheorem{lem}[thm]{Lemma}
\newtheorem{prop}[thm]{Proposition}
\newtheorem{cor}[thm]{Corollary}
\providecommand{\arcsinh}{\mathop{\rm arcsinh}\nolimits}
\providecommand{\arccosh}{\mathop{\rm arccosh}\nolimits}
\providecommand{\Card}{\mathop{\rm Card}\nolimits}
\providecommand{\tr}{\mathop{\rm tr}\nolimits}
\providecommand{\GL}{\mathop{\rm GL}\nolimits}
\providecommand{\SL}{\mathop{\rm SL}\nolimits}
\providecommand{\PGL}{\mathop{\rm PGL}\nolimits}
\providecommand{\PSL}{\mathop{\rm PSL}\nolimits}
\providecommand{\sys}{\mathop{\rm sys}\nolimits}
\providecommand{\Vol}{\mathop{\rm Vol}\nolimits}
\providecommand{\Isom}{\mathop{\rm Isom}\nolimits}
\providecommand{\Sim}{\mathop{\rm Sim}\nolimits}
\providecommand{\sym}{\mathop{\rm sym}\nolimits}
\newcommand{\cev}[1]{\reflectbox{\ensuremath{\vec{\reflectbox{\ensuremath{#1}}}}}}
\title{Simple geodesics and Markoff quads} 
\author{Yi Huang and Paul Norbury}
\address{Department of Mathematics and Statistics, The University of Melbourne, Australia 3010.}
\email{\href{mailto:y.huang@ms.unimelb.edu.au}{y.huang@ms.unimelb.edu.au}, \href{mailto:norbury@unimelb.edu.au}{norbury@unimelb.edu.au}}
\subjclass[2010]{32G15, 58D27, 30F60}
\thanks{This work was supported by the Australian Research Council grant DP1094328.}
\begin{document}
\maketitle

\begin{abstract}
The action of the mapping class group of the thrice-punctured projective plane on its $\GL(2,\mathbb{C})$ character variety produces an algorithm for generating the simple length spectra of quasi-Fuchsian thrice-punctured projective planes.  We apply this algorithm to quasi-Fuchsian representations of the corresponding fundamental group to prove: a sharp upper-bound for the length of its shortest geodesic, a McShane identity and the surprising result of non-polynomial growth for the number of simple closed geodesic lengths. 
\end{abstract}

\section{Introduction}

\subsection{Background}

Closed geodesics on hyperbolic surfaces have extremely rich properties, arising in geometry, topology and number theory, and this is particularly true of \emph{simple} (that is: non-self-intersecting) closed geodesics. The central objects of our study are simple closed geodesics on 3-cusped projective planes. We describe an algorithm for generating these geodesics and their lengths, and use this algorithm to study systoles, McShane identities and simple length spectra of 3-cusped projective planes.

\begin{figure}[ht]
\begin{center}
\includegraphics[scale=.4]{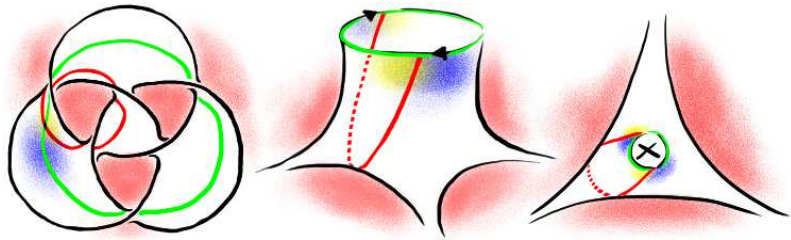}	
\end{center}
\caption{Simple curves on thrice-punctured projective planes.}
\label{fig:borromean}
\end{figure}

\textbf{Systoles:} The \emph{systole} $\sys(M)$ of a Riemannian manifold $M$ is the length of its shortest essential closed curve which is necessarily a simple closed geodesic. First mentioned in \cite{PuPSom}, Loewner's torus inequality gives the following relationship between the systole $\sys(T)$ of a Riemannian torus $T$ and its surface area $\Vol(T)$:
\begin{align*}
\sys(T)^2\leq\tfrac{2}{\sqrt{3}}\,\Vol(T),
\end{align*}
equality is realised for tori obtained by gluing opposite sides of regular hexagons. Since then, systolic inequalities have been obtained for other surfaces and higher dimensional manifolds such as Gromov's inequality \cite{GroFil} which holds for a large class of Riemannian manifolds. \newline
\newline
More recently, there has been growing interest in systolic hyperbolic geometry. For example, for orientable hyperbolic surfaces $X$ with $n\geq2$ cusps and/or boundary components \cite{SchCon}: 
\begin{align}
\sys(X)\leq 4\arccosh\left(\frac{3|\chi(X)|}{n}\right).
\end{align}
Hyperbolic systolic inequalities do not require a volume term on the right, because the area of a hyperbolic surface $X$ is topologically determined: it is $2\pi|\chi(X)|$.\newline
\newline
\textbf{McShane identities:} A \emph{McShane identity} may be thought of as a type of trigonometric identity for hyperbolic surfaces. It is usually a sum over functions of lengths of simple closed geodesics on a punctured/bordered hyperbolic surface. In particular, the structure of this sum is independent of the hyperbolic structure on the surface.\newline
\newline
McShane proved the first such identity in his doctoral dissertation \cite{McSRem}. Denote the collection of simple closed geodesics on a 1-cusped torus $X_{1,1}$ by $\mathcal{S}_{1,1}$, then:
\begin{align}
1=\sum_{\gamma\in\mathcal{S}_{1,1}}\frac{2}{1+\exp\ell_\gamma},
\end{align}
where $\ell_\gamma$ denotes the length of the simple closed geodesic $\gamma$. Each term in the above series corresponds to the probability that a geodesic launched from the cusp on $X_{1,1}$, up to its first point of self-intersection, does not intersect $\gamma$. McShane identities for other hyperbolic surfaces and quasi-Fuchsian representations have steadily followed \cite{AkiVar,HuHIde,HuaMcS,LuoDil,McSRem,McSSim,MirSim,NorLen,TWZGen}.\newline
\newline
\textbf{Simple length spectra:} Given a Riemannian surface, its \emph{length spectrum} is the multiset of lengths of closed geodesics on the surface. The Selberg trace formula interprets the length spectrum in terms of the spectrum of the Laplace-Beltrami operator \cite{SelHar}, and can be used to show that the number of closed geodesics of length less than $L$ on an orientable hyperbolic surface grows exponentially.\newline
\newline
The \emph{simple length spectrum} is the multiset of lengths of simple closed geodesics. Simple closed geodesics are relatively rare among closed geodesics. The growth rate of the simple length spectrum on an orientable hyperbolic surface is only polynomial in $L$. This was first proven using combinatorial arguments \cite{MRiSim,RivSim}. Mirzakhani gives a novel proof of a refinement of this result in \cite{MirGro}: let $\mathcal{S}(X)$ denote the set of simple closed geodesics on a hyperbolic surface $X$, for $L>0$ define
\begin{align*}
s_{X}(L)=\Card\left\{\gamma\in \mathcal{S}(X) \mid \ell_{\gamma}(X)<L\right\}.
\end{align*}
Then, the function $\eta:\mathcal{M}(X)\rightarrow\mathbb{R}_+$ defined by taking the limit
\begin{align}  \label{gropol}
\lim_{L\to\infty}\frac{s_X(L)}{ L^{\dim_{\mathbb{R}}\mathcal{M}(X)}}=\eta(X)>0
\end{align}
 is a continuous proper function. Mirzakhani's proof employs the ergodicity of the mapping class group action on the space of geodesic measured laminations, as well as her calculations of moduli spaces volumes \cite{MirSim} --- calculations utilising McShane identities.\newline
\newline
\textbf{Markoff triples:} A \emph{Markoff triple} is a solution $(x,y,z)\in\mathbb{C}^3$ to the equation:
\begin{align}  \label{triple}
x^2+y^2+z^2=xyz.
\end{align}
The hypersurface in $\mathbb{C}^3$ defined by \eqref{triple} is the relative character variety of the fundamental group of the once-punctured torus. In particular, this gives a real analytic diffeomorphism between the set of positive real Markoff triples and the Fuchsian component of the character variety --- a model for the Teichm\"uller space of hyperbolic 1-cusped tori.\newline
\newline
The following transformations:
\begin{align}\label{tripletrans}
(x,y,z)\mapsto
(x,y,xy-z)
\text{ and }(x,y,z)\mapsto(y,z,x),
\end{align}
take one Markoff triple to another. In particular, these transformations \eqref{tripletrans} generate the extended mapping class group of the punctured torus, and describe its action on the corresponding relative character variety. For Fuchsian characters, any triple $(x,y,z)\in\mathbb{R}^3_+$ consists of $2\cosh(\frac{1}{2}\cdot)$ of the lengths of an ordered triple of simple closed geodesics on a particular 1-cusped torus $X$. One is thus able to generate the entire simple length spectrum of $X$ by applying sequences of the transformations in \eqref{tripletrans} to $(x,y,z)$.\newline
\newline 
This relationship between positive real Markoff triples and the simple length spectra of hyperbolic 1-cusped tori was first exploited in \cite{CohApp}. In \cite{BowPro}, Bowditch uses a generalisation of this correspondence to derive a sharp systolic inequality for quasi-Fuchsian representations of 1-cusped tori, and also to establish a quasi-Fuchsian generalisation of McShane's original identity. The length generation algorithm can also be used to prove that quasi-Fuchsian representations of the punctured torus group have $L^2$~simple length growth rates \cite{HuaMod}.\newline

\subsection{Markoff Quads}
\begin{dfn}
\emph{Markoff quads} are 4-tuples $(a,b,c,d)\in\mathbb{C}^4$ of complex numbers satisfying:
\begin{align} \label{quad}
(a+b+c+d)^2=abcd.
\end{align}
\end{dfn}
The hypersurface in $\mathbb{C}^4$ defined by \eqref{quad} is the relative character variety of the fundamental group of the thrice-punctured projective plane $N_{1,3}$. In other words, we have the following bijective correspondence:
\begin{align*}
\left\{\text{ Markoff quads }\right\}\xleftrightarrow{1:1}&\left\{\text{ characters of }\GL(2,\mathbb{C})\text{-representations of }\pi_1(N_{1,3})\right\}.
\end{align*}
Characters of Fuchsian and quasi-Fuchsian representations are of special importance because they respectively arise as monodromy representations of hyperbolic surfaces and hyperbolic 3-manifolds. We now describe this relationship and introduce some notation for the rest of the paper.\newline
\newline
\textbf{Fuchsian:} Given a Fuchsian representation of the fundamental group of a (possibly non-orientable) surface $S$
\begin{align*}
\rho:\pi_1(S)\rightarrow\PSL^{\pm}(2,\mathbb{R})=\Isom^{\pm}(\mathbb{H}),
\end{align*}
the discrete subgroup $\rho(\pi_1(S))$ acts properly discontinuously on the hyperbolic plane $\mathbb{H}$ by (possibly orientation reversing) isometries. The quotient $\mathbb{H}/\rho(\pi_1(S))$ is a complete hyperbolic surface homeomorphic to $S$, and we denote it by $X_\rho$. By identifying the universal cover of $S$ with $\mathbb{H}$, Fuchsian representations induce a homeomorphism $h_\rho:S\rightarrow X_\rho$, canonical up to homotopy.\newline
\newline
\textbf{Quasi-Fuchsian:} Similarly, for a strictly quasi-Fuchsian representation
\begin{align*}
\rho:\pi_1(S)\rightarrow\PSL(2,\mathbb{C})=\Isom^+(\mathbb{H}^3), 
\end{align*}
the discrete subgroup $\rho(\pi_1(S))$ acts properly discontinuously on $\mathbb{H}^3$ by orientat\-ion preserving isometries. The quotient space $\mathbb{H}^3/\rho(\pi_1(S))$ is an orientable complete hyperbolic 3-manifold homeomorphic to $(0,1)\times S$, and we also denote it by $X_\rho$. In analogy to the Fuchsian case, quasi-Fuchsian representations induce a canonical (up to homotopy) embedding $h_\rho: S\hookrightarrow X_\rho$. \newline
\newline
\textbf{Length functions on character varieties:} Given a simple closed curve $\gamma$ on $S$, there is a unique simple closed geodesic on $X_\rho$ homotopy equivalent to $h_\rho(\gamma)$. This allows one to define a function $\ell_{\gamma}(\cdot)$ on the space of quasi-Fuchsian representations of $\pi_1(S)$ by taking the \emph{complex length} of the unique closed geodesic homotopy equivalent to $h_\rho(\gamma)$ in $X_\rho$. The complex length of a geodesic has real and imaginary parts respectively given by its geometric length and the angle of twisting of the normal bundle around the closed geodesic.

\subsection{Main results}
\begin{thm}[Systolic inequality] \label{th:systole}
Let $\rho$ denote a quasi-Fuchsian monodromy representation for a thrice-punctured projective plane, then
\begin{align}
\sys(X_\rho)\leq2\arcsinh(2).
\end{align}
In particular, the unique maximum of the systole function over the moduli space of all hyperbolic thrice-punctured projective planes is $2\arcsinh(2)$.
\end{thm}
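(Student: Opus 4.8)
The plan is to adapt Bowditch's trace-descent argument for once-punctured tori to the setting of Markoff quads.

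\emph{Step 1: the length--coordinate dictionary and reduction.} Fixing $a,b,c$ in \eqref{quad} leaves a monic quadratic in $d$ whose two roots $d,d'$ satisfy $d+d'=abc-2(a+b+c)$ and $dd'=(a+b+c)^2$; the resulting Vieta involution
\begin{align*}
\iota_d\colon(a,b,c,d)\longmapsto\bigl(a,b,c,\ abc-2(a+b+c)-d\bigr)
\end{align*}
together with coordinate permutations generates the (extended) mapping class group action on the character variety. I would first check that, for a (quasi-)Fuchsian $\rho$, the four entries of any quad in the corresponding orbit record $2\sinh\bigl(\tfrac{1}{2}\ell_{\gamma}(\rho)\bigr)$ as $\gamma$ ranges over a mapping-class-group family of one-sided simple closed curves on $N_{1,3}$ (so an entry equals $4$ precisely when $\ell_{\gamma}=2\arcsinh 2$), and that every essential simple closed curve which could realise the systole is, or is longer than, one of these coordinate curves. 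The theorem then reduces to showing that some entry of some quad in the orbit has modulus at most $4$ (equivalently $\mathrm{Re}\,\ell_{\gamma}\le 2\arcsinh 2$ for some orbit curve; in the Fuchsian case all entries may be taken positive after a sign normalisation).

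\emph{Step 2: descent to a minimal quad.} Following Bowditch, I would show that the functional $(a,b,c,d)\mapsto |a|+|b|+|c|+|d|$ attains a minimum on the orbit and that the minimal set is connected (a single quad or an adjacent pair); for quasi-Fuchsian $\rho$ this requires establishing a ``$(\mathcal B)$-type'' non-degeneracy — no entry wanders into the bad region, and the orbit graph grows in Fibonacci fashion away from the minimum — which is where the quasi-Fuchsian hypothesis really enters. At a minimal quad, replacing any single entry by its Vieta partner cannot decrease the functional, and since the partners multiply to $(\text{sum of the other three})^{2}$ the entry in hand is the one of smaller modulus; hence
\begin{align*}
|a|\le|b+c+d|,\quad |b|\le|a+c+d|,\quad |c|\le|a+b+d|,\quad |d|\le|a+b+c| .
\end{align*}
In the Fuchsian case positivity turns each of these into ``entry $\le\tfrac{1}{2}s$'', where $s=a+b+c+d$.

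\emph{Step 3: the inequality and the extremal surface.} Feeding these into the defining relation $s^{2}=abcd$: from $d\le s/2$ one obtains $abc=s^{2}/d\ge 2s$, and combining this with $a,b,c,d\le s/2$ and AM--GM a short bounded optimisation (which reduces first to three equal coordinates and then to $(4,4,4,4)$) yields $\min(a,b,c,d)\le 4$, with equality only for the fully symmetric quad $(4,4,4,4)$. Unwinding the dictionary gives $\sys(X_\rho)\le 2\arcsinh 2$, realised precisely by the complete hyperbolic $N_{1,3}$ whose quad is $(4,4,4,4)$ — a highly symmetric surface carrying four one-sided geodesics each of length exactly $2\arcsinh 2$ — which also establishes uniqueness of the maximiser over moduli space. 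The quasi-Fuchsian statement follows by running the same endgame with moduli in place of signs.

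The hard part will be Step~2 in the quasi-Fuchsian range: exactly as in Bowditch's original argument, ruling out the degenerate behaviour of the complex-valued Markoff-quad map — so that the $\ell^{1}$-type functional actually attains its minimum and the orbit is tree-like near it — is considerably more delicate than the Fuchsian case, where discreteness of the simple length spectrum makes the minimum automatic. A second, genuinely necessary point is the equality analysis isolating $(4,4,4,4)$, together with the verification in Step~1 that no simple closed geodesic of another topological type can undercut the coordinate curves.
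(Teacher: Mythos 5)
Your proposal follows essentially the same route as the paper: descend by Vieta flips to a quad at which no flip decreases the relevant quantity (the paper's ``sink''), use $dd'=(a+b+c)^2$ to convert minimality into the normalised conditions, run a short optimisation on the constraint surface to force an entry of modulus at most $4$, identify $(4,4,4,4)$ as the unique extremal quad, and check separately that the two-sided curves of that surface (trace $14$, length $2\arccosh 7$) cannot undercut it. Two caveats. First, in the quasi-Fuchsian endgame ``moduli in place of signs'' does not literally work: from $|d|\le|a+b+c|$ one cannot deduce $|d|\le\tfrac{1}{2}|a+b+c+d|$, since $|a+b+c+d|$ may be small by cancellation. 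The paper instead shows $|d|\le|d'|$ is equivalent to $\mathrm{Re}\bigl(\tfrac{d}{a+b+c+d}\bigr)\le\tfrac{1}{2}$, works with the four quantities $\tfrac{a}{s},\tfrac{b}{s},\tfrac{c}{s},\tfrac{d}{s}$ whose exact sum is $1$, and passes to moduli only at the last step via $|z|\ge\mathrm{Re}(z)$; your argument needs this reformulation to survive in the complex case. Second, the step you flag as hardest --- attainment of the minimum --- resolves more cheaply than you expect and without any BQ or quasi-Fuchsian hypothesis: the paper's descending-ray lemma shows that if the descent fails to terminate then some trace already has modulus at most $2$ (or some product $|ab|\le 4$), so either way $\Omega^3_\phi(4)\neq\varnothing$. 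The quasi-Fuchsian hypothesis is used only to translate traces into lengths and, for the uniqueness statement over moduli space, to reduce to a real quad in the fundamental domain.
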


\begin{remark}
The unique systolic maximum is realised by the 3-cusped projective plane with the largest isometry subgroup. This symmetric surface is doubly covered by a hyperbolic surface conformally equivalent to the unit sphere in $\mathbb{R}^3$ minus the 6 points where it meets the three axes. Its simple length spectrum can be generated from the integral Markoff quad~$(4,4,4,4)$.
\end{remark}

\begin{thm}[McShane identity]  \label{th:main}
Let $\rho$ be a quasi-Fuchsian representation of the thrice-punctured projective plane fundamental group $\pi_1(N_{1,3})$. Then,
\begin{align*}
\sum_{\gamma\in\Sim_2(N_{1,3})}\frac{1}{1+\exp{\tfrac{1}{2}\ell_{\gamma}(\rho)}}=\frac{1}{2},
\end{align*}
where the sum is over the collection $\Sim_2(N_{1,3})$ of free homotopy classes of essential, non-peripheral two-sided simple closed curves $\gamma$ on $N_{1,3}$.
\end{thm}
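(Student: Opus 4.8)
The plan is to adapt Bowditch's proof strategy for the quasi-Fuchsian McShane identity on the once-punctured torus to this setting, using the Markoff quad parametrization in place of Markoff triples. First I would fix a cusp of $N_{1,3}$ and consider the family of complete geodesics emanating from that cusp. Each such geodesic either self-intersects, spirals onto a simple closed geodesic, or is a proper simple bi-infinite ray; the last two cases form a measure-zero set, so one obtains a decomposition of a horocyclic segment (suitably normalized to length $1$) into gaps indexed by simple arcs or simple closed curves. The identity $\frac12 = \sum \frac{1}{1+\exp\frac12\ell_\gamma}$ should emerge by computing the total horocyclic length of the gap associated to each two-sided simple closed curve $\gamma\in\Sim_2(N_{1,3})$ and showing it equals $\frac{2}{1+\exp\frac12\ell_\gamma}$ (the factor $\frac12$ on the right-hand side reflecting that the horocycle around a cusp of the non-orientable surface is ``half'' the usual length, or equivalently that the local picture near the cusp is a half-disc quotient). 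The combinatorial backbone is the action of the mapping class group of $N_{1,3}$ on the Markoff quad variety via transformations analogous to \eqref{tripletrans}, organized on a tree (or the dual of the Farey-type tessellation) so that each simple closed curve corresponds to an edge and the gap widths can be summed telescopically along the tree.

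The key steps, in order, are: (1) establish the trace identities relating $\ell_\gamma(\rho)$ to the coordinate of the corresponding Markoff quad, and work out the relevant two- and three-term relations governing how quad entries change under the generators of the mapping class group action; (2) set up the combinatorial indexing structure — the analogue of Bowditch's $Q$-tree — recording the simple closed curves and the arcs bounding the gaps, and verify that the two-sided simple closed curves on $N_{1,3}$ are exactly enumerated by this structure (the one-sided curves and the peripheral curves must be carefully excluded; this is where the non-orientability genuinely enters); (3) prove the gap-measure formula: for each two-sided $\gamma$, the set of cusp-launched geodesics that are ``blocked'' by $\gamma$ before self-intersecting has horocyclic measure exactly $\frac{2}{1+\exp\frac12\ell_\gamma}$ — this is a hyperbolic-trigonometry computation in the ideal quadrilateral or pair-of-pants neighborhood of $\gamma$, valid verbatim in the quasi-Fuchsian range by analytic continuation once proven in the Fuchsian case; (4) prove the complementary gaps have total measure zero, i.e. that the ``non-simple'' and ``escaping'' directions form a null set, using a Birman–Series-type argument; and (5) assemble the pieces and normalize to get $\frac12$.

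I expect the main obstacle to be step (2) together with the convergence/null-set claim in step (4), rather than the individual gap computation. On a non-orientable surface one must be scrupulous about which isotopy classes of simple closed curves are two-sided versus one-sided, and the mapping class group action on Markoff quads is richer than in the torus case (four entries, a larger symmetry group), so identifying precisely which orbit data corresponds to $\Sim_2(N_{1,3})$ — and checking that the ``gaps'' one writes down actually tile the horocycle with no overlaps and no omissions — requires care. The convergence of the series (hence the legitimacy of the rearrangement implicit in summing over the tree) in the genuinely quasi-Fuchsian, non-discrete-on-the-circle regime is the analytic heart: here I would invoke the algorithm from the earlier part of the paper controlling the growth of $|a_n|$ along paths in the quad tree, mirroring how Bowditch controls Markoff triples, to guarantee absolute convergence and the vanishing of the complementary measure. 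Once those are in hand, steps (1) and (3) are essentially bookkeeping plus a single trigonometric lemma, and step (5) is immediate.
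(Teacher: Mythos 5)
Your proposal is essentially sound but follows a genuinely different route from the paper. You propose the \emph{geometric} McShane strategy: decompose a unit horocycle at a cusp into gaps indexed by simple curves, compute each gap width trigonometrically, kill the complementary Cantor set by a Birman--Series argument, and then pass from the Fuchsian locus to the quasi-Fuchsian space by analytic continuation. The paper never touches the horocycle picture in the proof of Theorem~\ref{th:main}: it works entirely combinatorially on the dual curve complex $\Omega$, defining $\Psi(\vec{e})=\tfrac{d}{a+b+c+d}$ on oriented edges so that the vertex and edge relations telescope to $\sum_{\vec{e}\in C(T)}\Psi(\vec{e})=1$ for every finite subtree $T$ (Lemma~\ref{equals1}), then exhausting $\Omega$ by neighbourhoods $B_n(T)$ of a finite attracting tree and using the estimate $|h(e)-h(ab)|\leq\kappa|h(ac)+h(bc)|$ together with the Fibonacci-growth convergence (Theorem~\ref{converge}, Lemma~\ref{converge2}) to show the error terms vanish, yielding $\sum_{\{\alpha,\beta\}\in\Omega^2}h(ab)=\tfrac12$ for \emph{every} BQ-Markoff map; the face relation $ab=2\cosh(\tfrac12\ell_\gamma)+2$ then converts $h(ab)$ into $\tfrac{1}{1+\exp\tfrac12\ell_\gamma}$. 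What the paper's route buys is a strictly more general statement (Theorem~\ref{th:bqmcshane} holds for all $\phi\in\Phi_{BQ}$, whether or not they come from quasi-Fuchsian representations) and it sidesteps the horocycle tiling and null-set analysis on a non-orientable surface entirely. What your route buys is a transparent geometric meaning for each summand in the Fuchsian case, essentially recovering \cite{NorLen} and then continuing it.

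Two cautions if you pursue your version. First, the gap decomposition itself does not exist off the Fuchsian locus --- there is no horocycle to tile in the quasi-Fuchsian $3$-manifold --- so what you analytically continue is the \emph{identity}, not the gap computation; your phrase ``valid verbatim in the quasi-Fuchsian range by analytic continuation'' conflates these. Second, for the identity theorem you need the series to define a holomorphic function on the (connected) quasi-Fuchsian space, which requires \emph{locally uniform} absolute convergence, not merely convergence at each point; the Fibonacci bound constant $\kappa$ in Theorem~\ref{converge} depends on $\phi$, so you must check it can be chosen locally uniformly in the character. This is doable but is precisely the analytic content you cannot wave away, and it is the same machinery the paper deploys --- so your route does not save work on the hard part, it only relocates the derivation of the identity from the tree to the Fuchsian horocycle.
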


\begin{remark}
This result is known for Fuchsian representations owing to the second author's work in \cite{NorLen}. Moreover, Hu, Tan  and Zhang \cite{HuHIde} have derived a McShane identity for solutions of Markoff-Hurwitz equations (see subsection~\ref{markoff-hurwitz}). In the $n=4$ case, their identity coincides with ours for quasi-Fuchsian thrice-punctured projective planes after a coordinate change. Thus, our result affirmatively answers their question of whether their identity has a geometric interpretation in the $n=4$ case.
\end{remark}

Take note that theorem~\ref{th:main} is a series over $\Sim_2(N_{1,3})$, the set of two-sided simple closed curves on $N_{1,3}$, rather than over the collection $\mathcal{S}_2(X_\rho)$ of two-sided simple closed \emph{geodesics} on the hyperbolic 3-manifold $X_\rho$. This is because for non-Fuchsian representations $\rho$, the geodesic representatives of $\Sim_2(N_{1,3})$ constitutes only a subset of $\mathcal{S}_2(X_\rho)$. Similarly, we consider the subset of the simple length spectrum of a quasi-Fuchsian representation $\rho$ corresponding to the set $\Sim_1(N_{1,3})$ of \emph{one-sided} simple closed curves on $N_{1,3}$, and study the growth rate of the following quantity:
\begin{align*}
s_\rho(L)=\Card\left\{
\gamma\in\Sim_1(N_{1,3}) \mid |\ell_{\gamma}(\rho)|<L\right\}.
\end{align*}
When $\rho$ is Fuchsian, the value of $s_\rho(L)$ is equal to the number $s_{X_\rho}(L)$ of geodesics on $X_\rho$ below length $L$.

\begin{thm} \label{th:asymp}
Given a quasi-Fuchsian representation $\rho$ of the thrice-punctured projective plane $N_{1,3}$,
\begin{align*}
\lim_{L\to\infty}\frac{s_\rho(L)}{L^m}>0
\end{align*}
for some $m$ satisfying $2.430<m < 2.477$.
\end{thm}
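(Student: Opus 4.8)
The plan is to reduce the growth-rate estimate to a counting problem on the tree of Markoff quads, following Bowditch's strategy for the once-punctured torus but adapted to the quaternary equation $(a+b+c+d)^2 = abcd$. First I would establish that the one-sided simple closed curves on $N_{1,3}$ are in bijective correspondence with the coordinate entries appearing in Markoff quads reachable from a given base quad by the mapping class group action; concretely, each vertex of the associated (infinite, locally finite) ``quad tree'' is labelled by a Markoff quad, and the one-sided curves correspond to the entries $a,b,c,d$ appearing along this tree. The complex length $\ell_\gamma(\rho)$ of the one-sided curve with label $x$ satisfies (up to a universal constant) $|x| \asymp \exp\bigl(\tfrac12|\ell_\gamma(\rho)|\bigr)$, so that counting $\gamma$ with $|\ell_\gamma(\rho)| < L$ amounts to counting quad-entries of modulus below $e^{L/2}$ — and in the Fuchsian/positive-real case these are genuine positive reals with a clean recursion.

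The second step is to control the \emph{branching} of the quad tree and the \emph{multiplicative growth} of entries along edges. Starting from a quad $(a,b,c,d)$, the Vieta-type move replaces one entry, say $d$, by $d' = d'' $ determined by the quadratic $(a+b+c+d)^2 = abcd$ in the variable $d$; since this is degree two, there are two roots $d, d'$ with $d + d' $ and $d d'$ expressible in $a,b,c$, giving the neighbour relation. I would show that for positive real quads the ``larger'' replacement satisfies a bound of the form $d' \le C\,(\text{product of some subset of } a,b,c)$, and conversely $d'$ is bounded below, so that the new maximal entry grows by a controlled multiplicative factor at each step away from the root. The upshot is that the number of quad-entries below a threshold $T$ is squeezed between two expressions of the form $T^{m_\pm}$, where the exponents $m_\pm$ are determined by the combinatorics: roughly, $m$ is the unique exponent for which a certain weighted sum over the neighbours of a vertex equals $1$, i.e.\ the critical exponent of a self-similar branching process on the quad tree. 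Substituting $T = e^{L/2}$ converts this into the stated asymptotic $s_\rho(L) \asymp L^m$; since the Fuchsian case gives the genuine geodesic count and the quasi-Fuchsian deformation only perturbs lengths by a bounded (Lipschitz in the representation) amount on the relevant finite-depth portions of the tree, the same exponent governs all quasi-Fuchsian $\rho$, and in particular $\lim_{L\to\infty} s_\rho(L)/L^m > 0$.

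The numerical bounds $2.430 < m < 2.477$ I would obtain by making the branching estimate quantitative: truncate the quad tree at finite depth, compute explicitly (by hand or machine) the number of distinct entries generated and their size relative to the root, and extract upper and lower bounds on the critical exponent from the sub- and super-multiplicative versions of the edge inequality. Equivalently, $m$ is pinned down by two transcendental equations of the shape $\sum_i \lambda_i^{-m} = 1$ coming from the worst-case and best-case edge growth rates $\lambda_i$; solving these numerically yields the interval. The explicit constants $2.430$ and $2.477$ are then just the endpoints after rounding outward.

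The main obstacle I anticipate is the second step: obtaining two-sided multiplicative bounds on how a quad entry changes under the Vieta move that are tight enough to force the exponent into an interval of width only about $0.05$. Unlike the Markoff triple equation $x^2+y^2+z^2=xyz$, where the replacement $z' = xy - z$ is affine in $z$ and the growth analysis is classical, the quad equation's replacement involves the full quadratic and the ``small root vs.\ large root'' dichotomy is more delicate — one must carefully identify, along each branch of the tree, which entry is being replaced and ensure the sum-of-entries term $a+b+c+d$ does not cause cancellation that would break the lower bound. Handling the sign/ordering bookkeeping so that the positive-real cone is preserved under all the moves, and that the quasi-Fuchsian characters stay in a region where the length--entry comparison $|x| \asymp \exp(\tfrac12|\ell_\gamma|)$ remains valid, is where the real work lies; the rest is then a (careful) branching-process computation.
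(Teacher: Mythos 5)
Your overall architecture matches the paper's: reduce $s_\rho(L)$ to counting entries of Markoff quads in the tree, convert traces to lengths via $2\arcsinh(\tfrac{1}{2}x)\approx 2\log x$, and transfer the exponent from a model case to general quasi-Fuchsian $\rho$. But there are two genuine gaps. First, the numerical bounds $2.430<m<2.477$ cannot be extracted from a finite-depth truncation of the quad tree together with ``sub- and super-multiplicative versions of the edge inequality.'' These constants are precisely Baragar's bounds for the exponent of the $n=4$ Markoff--Hurwitz equation $a_1^2+\cdots+a_4^2=a_1\cdots a_4$, and the paper simply cites \cite{BarExp} for them after observing that the quads generated from $(4,4,4,4)$ correspond (via $a=a_1^2$, etc.) to integral Markoff--Hurwitz solutions. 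Establishing that the counting function of these solutions is comparable to $T^m$ --- including the existence of a well-defined exponent, which is itself nontrivial since the upper and lower bounds come from different arguments --- is a substantial standalone result; your heuristic of a critical exponent solving $\sum_i\lambda_i^{-m}=1$ for a self-similar branching process does not engage with the actual difficulty, namely that the multiplicative growth factor along an edge depends on the whole current quad and degenerates near the ends of the tree (along spirals around $2$-cells the growth of $\log^+|\phi|$ is only linear, not proportional to the Fibonacci function's exponential branching).

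Second, your transfer from the model case to arbitrary quasi-Fuchsian $\rho$ via ``the quasi-Fuchsian deformation only perturbs lengths by a bounded (Lipschitz in the representation) amount on the relevant finite-depth portions of the tree'' is not a proof and is false as stated: the perturbation is not additively bounded, and a comparison valid only on finite-depth portions says nothing about the asymptotics as $L\to\infty$. What is actually needed --- and what the paper's Theorem~\ref{converge} provides --- is a uniform two-sided multiplicative comparison $\kappa^{-1}F_e(X)\le\log^+|\phi(X)|\le\kappa F_e(X)$ over the entire infinite tree, valid for every BQ-Markoff map (in particular for every quasi-Fuchsian character, where traces are complex and could a priori be small or exhibit cancellation in $a+b+c+d$, exactly the danger you flag but do not resolve). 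Applying this to both $\phi$ and the integral map $\phi_0$ of the $(4,4,4,4)$ quad gives $\log^+|\phi|\asymp\log^+|\phi_0|$, which is what preserves the exponent. Without the Fibonacci-growth machinery of Section~\ref{sec:fib}, your reduction to the positive-real model case is unjustified precisely in the quasi-Fuchsian regime the theorem is about.
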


\subsection*{Acknowledgements:}  The authors are grateful to Craig Hodgson for useful conversations, to Andrew Elvey-Price and Greg McShane for helping us to improve the bounds in Theorem~\ref{th:asymp}, and to Ser Peow Tan and Hengnan Hu for conversations about their work.

\section{Markoff Quads}  \label{sec:mq}

Consider a 4-tuple $(\alpha,\beta,\gamma,\delta)$ of distinct one-sided simple closed curves on a thrice-punctured projective plane $N_{1,3}$ that pairwise intersect once.  Figure~\ref{fig:flip} shows two such 4-tuples $({\color{red}\alpha},{\color{red}\beta},{\color{red}\gamma},{\color{blue}\delta})$ and $({\color{red}\alpha},{\color{red}\beta},{\color{red}\gamma},{\color{blue}\delta'})$; the depicted crossed circle is a cross-cap which represents an embedded M\"obius strip. 
\begin{figure}[ht]
\begin{center}
\includegraphics[scale=.4]{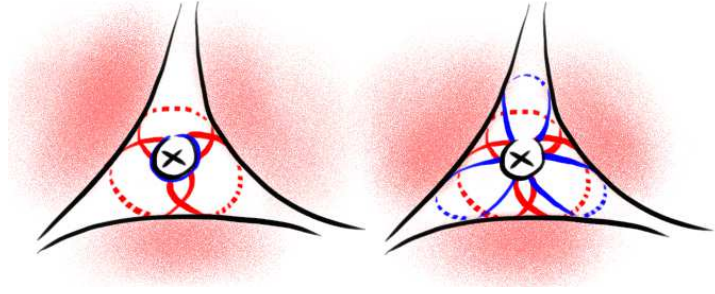}	
\end{center}
\caption{Flipping the blue curve.}
\label{fig:flip}
\end{figure}

Up to homotopy, the curves ${\color{blue}\delta}$ and ${\color{blue}\delta'}$ are the only one-sided simple closed curves that intersect each of the curves ${\color{red}\alpha},{\color{red}\beta},{\color{red}\gamma}$ exactly once. We call the process of replacing ${\color{blue}\delta}$ with ${\color{blue}\delta'}$ and vice versa, a \emph{flip}. For any quasi-Fuchsian representation $\rho:\pi_1(N_{1,3})\rightarrow\PSL(2,\mathbb{C})$, a Fricke trace identity \cite{MagRin} shows that
\begin{align*}
(a,b,c,d)=(2\sinh\tfrac{1}{2}\ell_{\alpha}(\rho),2\sinh\tfrac{1}{2}\ell_{\beta}(\rho),2\sinh\tfrac{1}{2}\ell_{\gamma}(\rho),2\sinh\tfrac{1}{2}\ell_{\delta}(\rho))
\end{align*}
satisfies equation \eqref{quad} and is therefore a \emph{Markoff quad}. That is:
\begin{align*}
(a+b+c+d)^2=abcd.
\end{align*}
Just as we can flip from $(\alpha,\beta,\gamma,\delta)$ to $(\alpha,\beta,\gamma,\delta')$, a new Markoff quad may be obtained via the following transformation:
\begin{align}  \label{flip}
(a,b,c,d)\mapsto(a,b,c,d'=abc-2a-2b-2c-d),
\end{align}
where $d'=2\sinh\frac{1}{2}\ell_{\delta'}(\rho)$. Similarly flip $\alpha$ to $\alpha'$, $\beta$ to $\beta'$ or $\gamma$ to $\gamma'$, to correspondingly obtain three other transformations:
\begin{align*}
(a,b,c,d)\mapsto\arraycolsep=1.4pt\def\arraystretch{1.2}
\begin{array}{l}
(bcd-2b-2c-2d-a,b,c,d),\\ 
(a,acd-2a-2c-2d-b,c,d),\\ 
(a,b,abd-2a-2b-2d-c,d),
\end{array}
\end{align*}
which take $(a,b,c,d)$ to new Markoff quads. Every one-sided simple closed curve can be uniquely obtained by some sequence of flips \cite{SchCom}, and thus \eqref{flip} gives us an algorithm for generating the (one-sided) simple length spectrum of hyperbolic 3-cusped projective planes $X_\rho$.\newline
\newline
We begin this section by considering the trace identities needed for this algorithm, before detailing how to store the combinatorics of Markoff quads (and hence the simple length spectrum) for a 3-cusped projective plane in its curve complex.

\subsection{Trace identities} \label{sec:trid} The fundamental group $\pi_1(N_{1,3})$ for thrice-punctured projective planes is the free froup $F_3$ of rank 3. Any representation $\rho:\pi_1(N_{1,3})\rightarrow \PSL(2,\mathbb{C})$ of a free group admits a lift to a representation
\begin{align*}
\tilde{\rho}:\pi_1(N_{1,3})\rightarrow \SL(2,\mathbb{C}).
\end{align*}
The character $\xi=\tr\tilde{\rho}$ completely determines the geometry of $X_\rho$, and so we consider the $\PSL(2,\mathbb{C})$-character variety of
\begin{align*}
\pi_1(N_{1,3})=F_3=\left\langle\,\alpha_1,\alpha_2,\alpha_3\mid -\; \right\rangle. 
\end{align*}
We set $A_i\in\SL(2,\mathbb{C})$ to denote the matrix $\tilde{\rho}(\alpha_i)$.\newline
\newline
Goldman showed \cite{GolTra} that any $\SL(2,\mathbb{C})$-character is determined by the values 
\begin{align*}
(\tr A_1,\tr A_2,\tr A_3,\tr A_1A_2,\tr A_2A_3,\tr A_3A_1,\tr A_1A_2A_3)\in\mathbb{C}^7.
\end{align*}
Thus, the $\SL(2,\mathbb{C})$-character variety for $F_3$ may be embedded as a subvariety in $\mathbb{C}^7$. In particular, the character variety is a hypersurface defined by Fricke's relation \cite{MagRin} for matrices in $\GL(2,\mathbb{C})$  --- in fact for $\PGL(2,\mathbb{C})$ since the identity is homogeneous: given three matrices $A_1,A_2,A_3\in \GL(2,\mathbb{C})$, set $A_0=A_1A_2A_3$. Then,
\begin{align}  \label{fricke}
4\det A_0=&
(\tr A_0)^2+\tr A_1\cdot\tr A_2\cdot\tr A_3\cdot\tr A_0
+\tr A_1A_2\cdot\tr A_2A_3\cdot\tr A_3A_1\\
&+\frac{1}{2}
\sum_{\sym}%
\left\{\begin{array}{l}(\tr A_i)^2\cdot\det A_jA_k
-\det A_i\cdot\tr A_j\cdot\tr A_k\cdot\tr A_jA_k\\
+\det A_i\cdot(\tr A_jA_k)^2-\tr A_0\cdot\tr A_i\cdot\tr A_jA_k
\end{array}
\right\}.\nonumber
\end{align}
The symmetric sum here is taken over all possible choices for $\{i,j,k\}=\{1,2,3\}$; the factor of $\tfrac{1}{2}$ in \eqref{fricke} compensates for indices such as $(i,j,k)=(1,2,3)$ and $(1,3,2)$ giving repeated terms. The proof of \eqref{fricke} uses the fact that it extends to a relation on $M(2,\mathbb{C})$ which is quadratic in each entry of $A_i$.\newline
\newline
For our purposes, we study the \emph{relative character variety} consisting of \emph{type-preserv\-ing} representations of $F_3$ (i.e.\ where peripheral elements are parabolic). The peripheral elements of the thrice-punctured projective plane are conjugate to $A_1A_2$,  $A_2A_3$, $A_3A_1$ or their inverses, so we impose the constraints:
\begin{align}  \label{eq:cuspidal}
\tr A_1A_2=\tr A_2A_3=\tr A_3A_1=2.
\end{align}
Moreover, instead of considering $\SL(2,\mathbb{C})$ characters, we consider characters of $\SL^{\pm}(2,\mathbb{C})$-representations such that
\begin{align}  \label{eq:1sided}
\det A_1=\det A_2=\det A_3=-1.
\end{align}
This choice may seem a little unnatural in $\GL(2,\mathbb{C})$ since we can simply replace $A_k$ by $iA_k$ to recover a $\SL(2,\mathbb{C})$ character. We choose $\det A_k=-1$ because it is natural when restricting to Fuchsian representations. With this normalisation, Fuchsian representations $\rho:F_3\rightarrow \PSL^{\pm}(2,\mathbb{R})$ have real coefficients. Note that when lifting from $\PSL^{\pm}(2,\mathbb{R})$ to $\SL^{\pm}(2,\mathbb{R})$, one-sided curves necessarily have negative determinant.\newline 
\newline
Since $\det A_i=-1$, then $A_i$ and $A_i^{-1}$ are not conjugate (as they would be in $\SL(2,\mathbb{C})$),  we need to specify orientations on the simple closed curves representing their conjugacy classes. In figure~\ref{fig:3},
\begin{figure}[ht]
	\centerline{\includegraphics[scale=0.4]{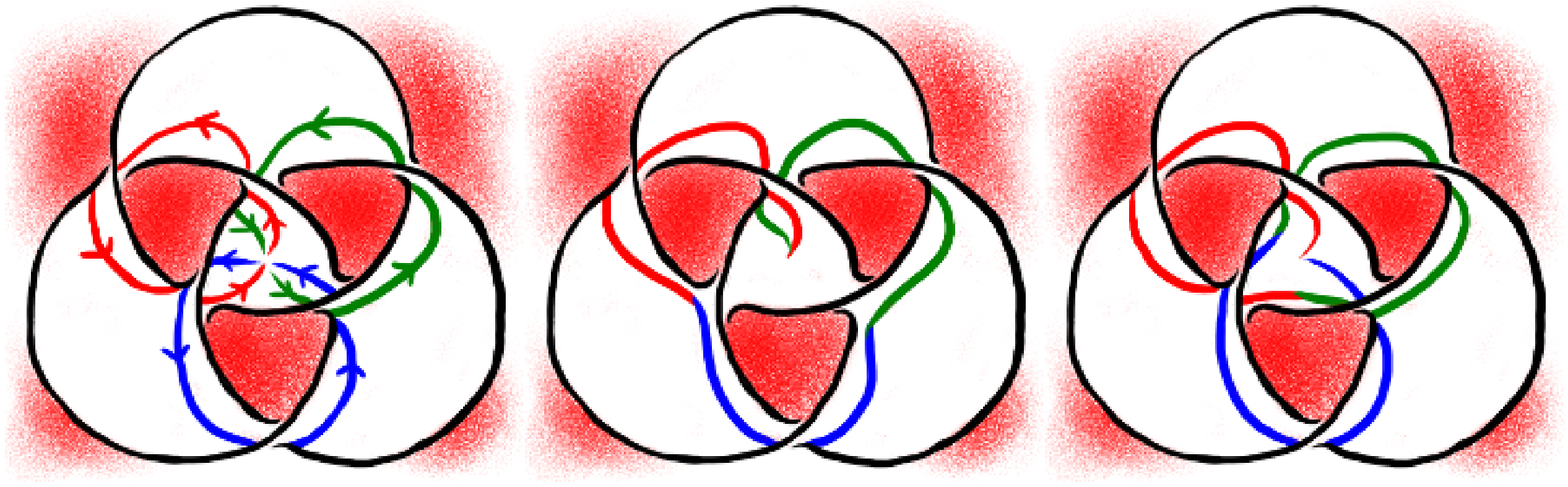}}
	\caption{Representative curves for ${\color{red}A_1},{\color{blue}A_2},{\color{green}A_3}$, ${\color{red}A_1}{\color{blue}A_2}{\color{green}A_3}$ and ${\color{red}A_1}{\color{green}A_3}{\color{blue}A_2}$.}
	\label{fig:3}
\end{figure}
we see that there is a choice of orientation for simple close curves representing $A_1$, $A_2$ and $A_3$ (anticlockwise) so that the curves representing $A_1A_2A_3$ and $A_1A_3A_2$ are simple.  These are the two choices of $A_1^{\pm 1}A_2^{\pm 1}A_3^{\pm 1}$ (up to conjugation and inversion) which are simple,  and we choose
\begin{align}
A_4=A_0^{-1}=(A_1A_2A_3)^{-1}\text{ and }A_4'=(A_1A_3A_2)^{-1}.
\end{align}
Set $a=\tr A_1$, $b=\tr A_2$, $c=\tr A_3$, $d=\tr A_4$ and $d'=\tr A_4'$, then \eqref{fricke} reorganises to yield  \eqref{quad}:
\begin{align*}
(a+b+c+d)^2=abcd\text{ and }(a+b+c+d')^2=abcd'
\end{align*}
which means that $(a,b,c,d)$ and $(a,b,c,d')$ are Markoff quads. In addition, since $d$ and $d'$ are the roots of the polynomial
\begin{align*}
p(x)=x^2+(2a+2b+2c-abc)x+(a+b+c)^2=(x-d)(x-d'),
\end{align*}
the following identities must hold:
\begin{align}\label{eq:edge}
d+d'+2a+2b+2c=abc\text{ and }dd'=(a+b+c)^2.
\end{align}
It should be noted that these are precisely the sum and product relations in \cite{GolTra}.

\begin{remark}
In \cite{MPTCha}, Maloni, Palesi and Tan study a different \emph{relative character variety} of representations of $F_3$ into $\SL(2,\mathbb{C})$ which arises from the four-punctured sphere --- their $A_i$ are constrained to be parabolic.  
\end{remark}

Successive applications of equation~\eqref{eq:edge} enables one to generate the trace (and hence the length) of every one-sided simple closed homotopy class on $X$. We now explain how to generate the traces of all of the 2-sided simple closed homotopy classes.\newline
\newline
Any two one-sided simple closed curves ${\color{red}\gamma_i},{\color{blue}\gamma_j}$ intersecting exactly once live inside an embedded punctured M\"obius strip, as depicted in Figure~\ref{fig:mob}.  They uniquely induce a two-sided simple closed curve as a boundary component (with the other boundary component peripheral in $N_{1,3}$).
\begin{figure}[ht]  
	\centerline{\includegraphics[height=2.5cm]{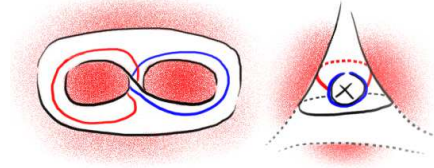}}
	\caption{Punctured M\"obius strip}
	\label{fig:mob}
\end{figure}

The following trace identity in $\GL(2,\mathbb{C})$
\begin{equation}  \label{eq:mobid}
\tr A_iA_j+\det A_j\cdot\tr A_iA_j^{-1}=\tr A_i\cdot\tr A_j
\end{equation} 
relates the complex lengths of peripheral curves $\alpha$ and $\beta$ of a punctured M\"obius strip to the complex lengths of ${\color{red}\gamma_i}$ and ${\color{blue}\gamma_j}$:
\begin{equation}   \label{eq:mobident} 
\cosh\left(\tfrac{1}{2}\ell_{\alpha}\right)+\cosh\left(\tfrac{1}{2}\ell_{\beta}\right)=2\sinh\left(\tfrac{1}{2}\ell_{\gamma_i}\right)\sinh\left(\tfrac{1}{2}\ell_{\gamma_j}\right).
\end{equation}
Since any 2-sided geodesic on a 3-cusped projective plane necessarily bounds a pair of pants, equation \eqref{eq:mobident} allows one to obtain the length of any 2-sided simple closed geodesic.

\subsection{The Curve Complex}
Equations~\eqref{eq:edge} and \eqref{eq:mobid} give us an algorithm to generate the entire length spectrum of a 3-cusped projective plane, starting from a corresponding Markoff quad. The combinatorics of this algorithm can be stored in terms of the \emph{curve complex} of $N_{1,3}$.\newline
\newline
Consider the geometric realisation of the abstract simplicial complex $\Omega^*$ with its $n$-simplices given by subsets of $n+1$ distinct homotopy classes of one-sided simple closed curves in $N_{1,3}$ that pairwise intersect once. Identifications of simplices as the faces of higher dimensional simplicies is given by inclusion. This is a pure simplicial $3$-complex, and its $1$-skeleton has been previously described by Scharlemann \cite{SchCom} as being the 1-skeleton of the cell complex formed from a tetrahedron by repeated stellar subdivision of the faces, but not the edges.\newline
\newline
The curve complex $\Omega$ that we're concerned with is the dual of $\Omega^*$. The decision to take the dual accords with Bowditch's conventions in \cite{BowPro,BowMar}. We now describe and assign notation for the cells of $\Omega$.\newline
\newline
\noindent
\textbf{The vertices, or $0$-cells of $\Omega$ are:}
\begin{align*}
\Omega^0:=\left\{
\{\alpha,\beta,\gamma,\delta\}\
\begin{array}{|l}
   \alpha,\beta,\gamma,\delta\text{ are homotopy classes of one-sided simple closed}\\ 
  \text{curves that pairwise geometrically intersect once}
\end{array}
\right\}
\end{align*} 
\textbf{The edges, or $1$-cells of $\Omega$ are:}
\begin{align*}
\Omega^1:=\left\{
\{\alpha,\beta,\gamma\}\ 
\begin{array}{|l}
 \alpha,\beta,\gamma\text{ are homotopy classes of one-sided simple closed}\\ 
 \text{curves that pairwise geometrically intersect once}
 \end{array}
 \right\}
\end{align*} 
Observe that each edge may be interpreted as a flip from one $0$-cell to another. Hence, the 1-skeleton of $\Omega$ is a 4-regular tree (i.e.\ each vertex has degree 4). Further,  the connectedness of this cell-complex described in \cite[Theorem~3.1]{SchCom} means that flips generate all possible $0$-cells, and hence all one-sided simple closed geodesics.\newline
\newline
\textbf{The faces, or $2$-cells of $\Omega$ are:}
\begin{align*}
\Omega^2:=\left\{
\{\alpha,\beta\}\
\begin{array}{|l}
\alpha,\beta\text{ are homotopy classes of one-sided simple closed curves}\\ 
\text{that intersect geometrically once}
\end{array}
\right\}
\end{align*} 
It follows from the observation in the previous subsection regarding punctured M\"obius strips embedded in $S$ that each face represents a unique homotopy class of essential, non-peripheral two-sided simple closed curves on $N_{1,3}$.\newline
\newline
\textbf{The 3-cells $\Omega^3$ of $\Omega$ are:}
\begin{align*}
\Omega^3:=\left\{\ \{\alpha\}\mid \alpha\text{ is an homotopy class of one-sided simple closed curves }\right\}
\end{align*} 
We later sometimes denote $3$-cells by capital letters, and use:
\begin{align*}
\vec{\Omega}^1=\left\{ \vec{e}=\{\alpha,\beta,\gamma;\delta'\rightarrow\delta\}\mid \{\alpha,\beta,\gamma\}\in\Omega^1\right\}.
\end{align*}
to denote the collection of \emph{oriented edges} of $\Omega$. In particular, $\{\alpha,\beta,\gamma;\delta'\rightarrow\delta\}$ points from $\{\delta'\}$ to $\{\delta\}$. Figure~\ref{fig:complex} illustrates the local geometry of an oriented edge.
\begin{figure}[ht]
\begin{center}
\includegraphics[scale=.75]{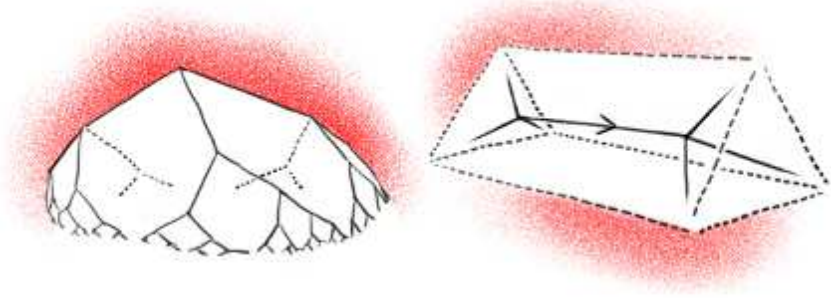}	
\end{center}
\caption{A $3$-cell (left) and an oriented edge (right).}
\label{fig:complex}
\end{figure}

\subsection{Markoff maps and characters}

Given a representation $\rho:F_3\to \GL(2,\mathbb{C})$ satisfying the trace condition \eqref{eq:cuspidal} and the determinant condition \eqref{eq:1sided}, we use Greek letters for simple closed curves and the corresponding Latin letters for the trace of the image of any homotopy class it defines. We decorate $\Omega$ with trace data by assigning to every 3-cell $\{\alpha\}\in\Omega^3$ its corresponding trace $\tr\rho(\alpha)=a$ thus defining a function:
\begin{align*}
\phi:\Omega^3\rightarrow\mathbb{C}\text{ by }\phi(\alpha)=\tr\rho(\alpha).
\end{align*}
A Markoff map induced from $\rho$ may be thought of as the character corresponding to $\rho$ restricted to one-sided simple closed homotopy classes. Our previous discussions in subsection~\ref{sec:trid} mean that the data carried by a Markoff map suffices to recover the whole character.\newline
\newline
We introduce the language of Markoff maps to allude to Bowditch's work \cite{BowPro,BowMar}; lower-dimensional simplices in $\Omega$ may be interpreted as mnemonics for encoding the following relations:\newline
\newline
\textbf{Vertex relation}: for $\{\alpha,\beta,\gamma,\delta\}\in\Omega^0$,  \eqref{quad} is equivalent to:
\begin{equation}  \label{vertrel}
\frac{d}{a+b+c+d}=\frac{a+b+c+d}{abc}.
\end{equation}
where $a=\tr\rho(\alpha)$, $b=\tr\rho(\beta)$, $c=\tr\rho(\gamma)$ and $d=\tr\rho(\delta)$. Note that the set of values of any four 3-cells which meet at a vertex corresponds to a Markoff quad.\newline
\newline
\textbf{Edge relation}: an edge $e=(\alpha,\beta,\gamma)\in\Omega^1$ lies in the intersection of the two 0-cells $(\alpha,\beta,\gamma,\delta)$ and $(\alpha,\beta,\gamma,\delta')$, and \eqref{eq:edge} yields:
\begin{align}  \label{edgerel}
\frac{a+b+c+d}{abc}+\frac{a+b+c+d'}{abc}=1,
\end{align}
where $d'=\tr\rho(\delta')$ and the others are as previously defined. Since each edge joins two vertices, the edge relation therefore tells us how to flip from one Markoff quad to another.\newline
\newline
\textbf{Face relation}: given $\{\alpha,\beta\}\in\Omega^2$ and $\epsilon$ the unique non-peripheral two-sided simple closed homotopy class disjoint from $\alpha$ and $\beta$, from \eqref{eq:mobid} we have:
\begin{align} \label{facerel}
ab=e+2
\end{align}
where $a=\tr\rho(\alpha)$, $b=\tr\rho(\beta)$ and $e=\tr\rho(\epsilon)$.\newline
\newline
We stress once again that these three relations allow us to generate the character for $\rho$ from a starting Markoff quad: the vertex and edge relations generate the traces for all the one-sided simple closed curves and the face relation then produces the traces for all of the two-sided simple closed curves.\newline
\newline
Thus, we're led to consider general maps $\phi:\Omega^3\rightarrow \mathbb{C}$ satisfying the edge and vertex relations. We call such functions \emph{Markoff maps}, and let $\Phi$ denote the collection of all Markoff maps.  In keeping with our notation for representations, we use Greek and Latin letters respectively for 3-cells (one-sided curves) and their image under some $\phi\in\Phi$.
\begin{lem}\label{lem:markoff}
The collection of Markoff maps and the collection of characters induced from $\SL^{\pm}(2,\mathbb{C})$-representations satisfying \eqref{eq:cuspidal} and \eqref{eq:1sided} are in canonical bijection.
\end{lem}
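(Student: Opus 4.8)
The asserted bijection is the restriction map $R$ taking a character (of a representation satisfying \eqref{eq:cuspidal} and \eqref{eq:1sided}) to its values on one-sided simple closed homotopy classes; subsection~\ref{sec:trid} already shows that $R$ lands in the set $\Phi$ of Markoff maps, so it remains to prove $R$ is bijective. The plan is to compare both $\Phi$ and the set of characters through one common coordinate, namely ``evaluate at a fixed base vertex of $\Omega$''. Fix the base vertex $v_0=\{\alpha_1,\alpha_2,\alpha_3,\alpha_4\}\in\Omega^0$ of subsection~\ref{sec:trid}, where $\alpha_1,\alpha_2,\alpha_3$ are the free generators of $\pi_1(N_{1,3})$ and $\alpha_4$ is represented by $(\alpha_1\alpha_2\alpha_3)^{-1}$, and let $Q\colon\Phi\to\{(a,b,c,d):(a+b+c+d)^2=abcd\}$ be $\phi\mapsto(\phi(\alpha_1),\phi(\alpha_2),\phi(\alpha_3),\phi(\alpha_4))$, which is well defined by the vertex relation. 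Then $Q\circ R$ is exactly the map sending a character to its Markoff quad $(\tr A_1,\tr A_2,\tr A_3,\tr A_4)$.

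First I would check that $Q$ is injective: the $1$-skeleton of $\Omega$ is a connected $4$-regular tree, and along any edge the flip \eqref{flip} computes the new fourth value from the quad at the adjacent vertex, so the four values of $\phi$ at $v_0$ propagate uniquely along the tree and determine $\phi$ on every $3$-cell. Next I would check that $Q\circ R$ is a bijection onto the set of Markoff quads. For injectivity, note that a character here is determined by its $7$-tuple of Goldman trace coordinates \cite{GolTra}, which under \eqref{eq:cuspidal} and \eqref{eq:1sided} equals $(\tr A_1,\tr A_2,\tr A_3,2,2,2,-\tr A_4)$ and is thus recovered from $(\tr A_1,\tr A_2,\tr A_3,\tr A_4)$. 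For surjectivity, given a Markoff quad $(a,b,c,d)$, the computation in subsection~\ref{sec:trid} shows that the associated $7$-tuple satisfies Fricke's relation \eqref{fricke}, so the classical realisation theorem of Fricke and Vogt \cite{MagRin,GolTra} produces a representation realising these traces with $\det A_i=-1$ and with \eqref{eq:cuspidal} holding; its character $\xi$ then satisfies $(Q\circ R)(\xi)=(a,b,c,d)$. Both Goldman's determination result and the Fricke--Vogt reconstruction transfer to our $\SL^{\pm}(2,\mathbb{C})$ setting through the rescaling $A_k\mapsto iA_k$ of subsection~\ref{sec:trid}, which lands in $\SL(2,\mathbb{C})$.

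Granting ``$Q$ injective'' and ``$Q\circ R$ bijective'', the map $R$ is forced to be bijective: it is surjective because any $\phi\in\Phi$ equals $R(\xi)$ for the $\xi$ with $(Q\circ R)(\xi)=Q(\phi)$, and injective because $R(\xi)=R(\xi')$ forces $(Q\circ R)(\xi)=(Q\circ R)(\xi')$; since $R$ is the restriction map this is the claimed canonical bijection. I expect the main obstacle to be the surjectivity input --- having a reconstruction theorem that outputs an \emph{honest} $\SL^{\pm}(2,\mathbb{C})$-representation with prescribed trace coordinates, the prescribed determinants $\det A_i=-1$, and \eqref{eq:cuspidal} satisfied exactly (not merely up to the Fricke relation). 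This is classical, but one should remark that reducibility of the reconstructed representation is harmless here, since the vertex and edge relations defining $\Phi$ are purely algebraic trace identities valid for every representation in the family.
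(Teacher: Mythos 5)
Your argument is correct, and it reaches the bijection by a genuinely different route from the paper's. You factor everything through evaluation at a base vertex: $Q$ injective on $\Phi$ by propagating the quad along the $4$-regular tree via \eqref{flip}, $Q\circ R$ injective by Goldman's determination of a character by its seven trace coordinates (which, under \eqref{eq:cuspidal} and \eqref{eq:1sided}, collapse to the quad), and $Q\circ R$ surjective by an abstract Fricke--Vogt realisation theorem. The paper instead proves surjectivity of $R$ by writing down explicit matrices realising an arbitrary Markoff quad --- one family for the case where some $\phi(\{\alpha\})=0$ and a second family, normalised by $1/(a+b+c+d)$, for nowhere-zero $\phi$ --- and treats injectivity as already settled by the preceding discussion that the vertex, edge and face relations recover the whole character from a Markoff map. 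Your version has the merit of making the injectivity bookkeeping completely explicit, which the paper leaves implicit; the paper's version has the merit of being self-contained exactly where you correctly identify the main obstacle, namely producing an honest $\SL^{\pm}(2,\mathbb{C})$-representation with $\det A_i=-1$ and \eqref{eq:cuspidal} holding on the nose. Note that the degenerate quads (those with $a+b+c+d=0$, forced when some coordinate vanishes) are precisely where a blanket appeal to the realisation theorem is most delicate, and they are precisely why the paper's proof splits into two cases; if you pursue your route you should either verify that the cited realisation statement covers these reducible/degenerate characters in the $\SL^{\pm}$ normalisation, or fall back on an explicit construction for that stratum as the paper does.
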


\begin{proof}
The restriction of any such character $\xi:F_3\rightarrow\mathbb{C}$ to the one-sided simple closed homotopy classes may be thought of as a Markoff map. Hence, it suffices to show that every Markoff map may be induced by a $\SL^{\pm}(2,\mathbb{C})$-representations satisfying \eqref{eq:cuspidal} and \eqref{eq:1sided}.\newline
\newline
Given a Markoff map $\phi:\Omega^3\rightarrow\mathbb{C}$, if there is a 3-cell $\{\alpha\}$ on which $\phi(\{\alpha\})=0$, fix an arbitrary 0-cell $\{\alpha,\beta,\gamma,\delta\}$ lying on the boundary of $\{\alpha\}$. Then consider the representation:
\begin{align}
\rho:F_3&=\left\langle \alpha,\beta,\gamma\right\rangle\rightarrow\SL^{\pm}(2,\mathbb{C})\\
\alpha&\mapsto
\left[\begin{matrix}
0 & 1\\
1 & 0
\end{matrix}\right],
\beta\mapsto
\left[\begin{matrix}
b & 1\\
1 & 0
\end{matrix}\right],
\gamma\mapsto
\left[\begin{matrix}
0 & 1\\
1 & c
\end{matrix}\right].\notag
\end{align}
It is easy to check that $\rho$ satisfies the desired trace and determinant conditions and that it induces $\phi$. If $\phi$ is nowhere-zero, choose an arbitrary 0-cell $\{\alpha,\beta,\gamma,\delta\}$, since $\phi$ is nowhere-zero,
\begin{align*}
(a+b+c+d)^2=abcd\neq0.
\end{align*}
This in turn means that the following representation is well-defined:
\begin{align}\label{eq:representation}
\rho:F_3&=\left\langle \alpha,\beta,\gamma\right\rangle\rightarrow\SL^{\pm}(2,\mathbb{C})\\
\alpha&\mapsto
\frac{1}{a+b+c+d}\left[\begin{matrix}
ab & b(a+c)\\
a(a+d) & a(a+c+d)
\end{matrix}\right],\notag\\
\beta&\mapsto
\frac{1}{a+b+c+d}\left[\begin{matrix}
ab & -b(b+d)\\
-a(b+c) & b(b+c+d)
\end{matrix}\right],\notag\\
\gamma&\mapsto
\frac{1}{a+b+c+d}\left[\begin{matrix}
ab+c(a+b+c+d) & b(a+c)\\
-a(b+c) & -ab
\end{matrix}\right].\notag
\end{align}
We omit the check to see that $\rho$ satisfies \eqref{eq:cuspidal} and \eqref{eq:1sided} and induces $\phi$.
\end{proof}

The above lemma means that the relative character variety that we wish to study may be characterised as the variety of Markoff maps. Among this collection of Markoff maps, we wish to focus on those which correspond to characters of quasi-Fuchsian representations. This leads us to define BQ-Markoff maps:\newline
\newline
For $k\geq0$ and $\phi\in\Phi$, define the set $\Omega_\phi^3(k)\subseteq \Omega^3$ by:
\begin{align}
\Omega_{\phi}^3(k):=\left\{\{\alpha\}\in\Omega^3\mid |\phi(\{\alpha\})|=|a|\leq k \right\}.
\end{align}
This set allows us to keep track of one-sided simple curves with trace less than $k$, and we similarly define $\Omega^2_\phi(k)\subset\Omega^2$ for two-sided simple curves. Every two-sided simple curve corresponds to a unique 2-cell $\{\alpha,\beta\}$ --- the shared face of the two 3-cells $\{\alpha\}$ and $\{\beta\}$. We define:
\begin{align}
\Omega^2_\phi(k):=\left\{\{\alpha,\beta\}\in\Omega^2\mid |\phi(\{\alpha\})\phi(\{\beta\})|=|ab|\leq k\right\}.
\end{align}
Note that in using $|ab|$ instead of $|ab-2|$ for the conditions imposed, the set $\Omega^2_\phi(k)$ doesn't quite correspond to the set of two-sided simple curves with trace less than $k$. Although we will find this definition more suited to our analysis. In addition, we later focus on the following collection of Markoff maps $\Phi_{BQ}\subset\Phi$:
\begin{align}  \label{BQ}
\Phi_{BQ}:=\left\{
\phi\in\Phi\ 
\begin{array}{|l}
 \Omega^2_\phi(k)\text{ is finite for any }k,\\
 \text{and for any }\{\alpha,\beta\}\in\Omega^2_\phi(4),\,ab\notin[0,4]
\end{array}
\right\}.
\end{align}
We show in section~\ref{sec:mcshane} that these are sufficient conditions to guarantee the existence of a McShane identity for a given Markoff map. These conditions are similar to Bowditch's BQ-condition, which is a conjectural trace-based characterisation of quasi-Fuchsian representations. The following result shows that our condition is also necessary for quasi-Fuchsian representations of the thrice-punctured projective plane.

\begin{lem}
Markoff maps obtained from quasi-Fuchsian representations lie in $\Phi_{BQ}$.
\end{lem}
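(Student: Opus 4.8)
The plan is to verify the two defining conditions of $\Phi_{BQ}$ separately, using the $\SL(2,\mathbb{C})$ trace-length dictionary together with elementary properties of discrete subgroups of $\Isom(\mathbb{H}^3)$. Let $\rho$ be a quasi-Fuchsian representation, $\tilde\rho\colon F_3=\pi_1(N_{1,3})\to\SL^{\pm}(2,\mathbb{C})$ a lift, and $\phi$ an induced Markoff map, so that $\phi(\{\alpha\})=\tr\tilde\rho(\alpha)$ and the face relation $ab=e+2$ holds, where $\epsilon$ is the essential non-peripheral two-sided simple closed curve associated to a $2$-cell $\{\alpha,\beta\}\in\Omega^2$, $e=\tr\tilde\rho(\epsilon)$, and $\det\tilde\rho(\epsilon)=1$. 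Set $\Gamma=\rho(\pi_1(N_{1,3}))$. The structural facts I would record first are: $\rho$ is discrete and faithful, so $\Gamma$ is torsion-free (as $F_3$ is); $\rho$ is type-preserving, so the parabolic elements of $\Gamma$ are precisely the peripheral ones; and $\Gamma$ is geometrically finite (being quasi-Fuchsian), so the convex core of $X_\rho=\mathbb{H}^3/\Gamma$ has finite volume and its cusps correspond to the punctures of $N_{1,3}$.

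The key observation is that for \emph{every} essential non-peripheral two-sided simple closed curve $\epsilon$ on $N_{1,3}$, the element $\rho(\epsilon)$ is loxodromic: it is non-trivial since $\epsilon$ is essential and $\rho$ faithful; it is not parabolic since $\epsilon$ is non-peripheral and $\rho$ type-preserving; and it is not elliptic since $\Gamma$ is torsion-free. Because $\det\tilde\rho(\epsilon)=1$, being loxodromic is equivalent to $e=\tr\tilde\rho(\epsilon)\notin[-2,2]$, hence $ab=e+2\notin[0,4]$; this establishes the second condition defining $\Phi_{BQ}$ (and in particular $ab\neq0$). Loxodromicity also guarantees that each $2$-cell determines an honest closed geodesic of $X_\rho$, and since distinct $2$-cells carry distinct unoriented free homotopy classes of curves on $N_{1,3}$, faithfulness of $\rho$ makes the resulting map from $\Omega^2$ to the set of unoriented closed geodesics of $X_\rho$ injective.

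It remains to show $\Omega^2_\phi(k)$ is finite for each $k$. If $\{\alpha,\beta\}\in\Omega^2_\phi(k)$ then $|e|=|ab-2|\leq k+2$; writing the complex length as $\ell_\epsilon(\rho)=x+iy$ with $x=\Re\ell_\epsilon(\rho)>0$ the geometric length, and using $e=\pm2\cosh\tfrac12\ell_\epsilon(\rho)$ together with the identity $\big|\cosh\tfrac12(x+iy)\big|^2=\sinh^2\tfrac x2+\cos^2\tfrac y2\geq\sinh^2\tfrac x2$, we obtain $\sinh\tfrac x2\leq\tfrac{k+2}{2}$, i.e.\ the geodesic representative of $\epsilon$ in $X_\rho$ has length at most $L_k:=2\arcsinh\tfrac{k+2}{2}$. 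By the injectivity just noted, $\#\Omega^2_\phi(k)$ is at most the number of unoriented closed geodesics of $X_\rho$ of length $\leq L_k$, which is finite by a standard compactness argument: every closed geodesic lies in the convex core $C$, which has finite volume; a closed geodesic of length $\leq L_k$ can penetrate each cuspidal end of $C$ to depth at most $L_k/2$, so all such geodesics meet a fixed compact set $K\subset C$; lifting to $\mathbb{H}^3$ and conjugating so that the axis of a representative $g\in\Gamma$ meets a fixed lift $\tilde K$ of $K$, we get $d(p,gp)\leq L_k+2\,\mathrm{diam}(\tilde K)$ for a fixed $p\in\tilde K$, and proper discontinuity of the $\Gamma$-action on $\mathbb{H}^3$ permits only finitely many such $g$. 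Hence $\Omega^2_\phi(k)$ is finite, and $\phi\in\Phi_{BQ}$.

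The substantive step is this last finiteness assertion, and within it the only delicate point is the behaviour of short closed geodesics near the cusps of $X_\rho$: this is exactly where geometric finiteness (equivalently, finiteness of the convex-core volume) is used, since for a general discrete group one would need extra input to rule out infinitely many short geodesics escaping out a cusp. The trace-length estimate and the exclusion of $[0,4]$ are routine by comparison, so the write-up should concentrate on making the compactness argument of the third paragraph precise (or on citing it).
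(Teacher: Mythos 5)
Your proof is correct, and the second half of your argument (excluding $ab\in[0,4]$ via loxodromicity of $\rho(\epsilon)$) is exactly the paper's. For the finiteness of $\Omega^2_\phi(k)$, however, you take a genuinely different route. The paper first shows $\Omega^3_\phi(m)$ is finite by citing discreteness of the simple length spectrum for the \emph{one-sided} curves, and then bounds $\Card\Omega^2_\phi(k)$ by $(\Card\Omega^3_\phi(m))^2$ using the fact that each $2$-cell is the intersection of a unique pair of $3$-cells. You instead work directly with the \emph{two-sided} curve attached to each $2$-cell: the trace bound $|ab|\leq k$ gives $|e|\leq k+2$, the identity $|2\cosh\tfrac12(x+iy)|^2=4(\sinh^2\tfrac{x}{2}+\cos^2\tfrac{y}{2})$ converts this to a bound on the real length, and finiteness follows from injectivity of $\Omega^2\to\Sim_2(N_{1,3})$ plus the standard convex-core compactness argument. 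Your route is more self-contained (you prove the discreteness the paper merely cites) and it also quietly sidesteps a gap in the paper's squaring step: $|ab|\leq k$ does not by itself force both $|a|$ and $|b|$ below any fixed $m$, so the paper's reduction to $\Omega^3_\phi(m)$ implicitly needs a positive lower bound on $|a|$ over one-sided curves (which does hold, via $|2\sinh\tfrac12\ell|\geq2\sinh\tfrac12\mathrm{Re}(\ell)\geq2\sinh\tfrac12\sys$, but is not stated). One nitpick on your side: ``not elliptic since $\Gamma$ is torsion-free'' needs discreteness as well, to exclude infinite-order elliptics; you have recorded discreteness, so this is only a matter of phrasing.
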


\begin{proof}
Given a Markoff map $\phi$ arising from a quasi-Fuchsian representation $\rho$, consider the multiset of complex numbers obtained from evaluating $\phi$ on $\Omega^3_\phi(m)$. Since this multiset is a subset of the simple trace spectrum of $\rho$, which is obtained (up to sign) from taking $2\sinh(\tfrac{1}{2}\cdot)$ of the simple length spectrum, the discreteness of the simple length spectrum ensures that $\Omega^3_\phi(m)$ is finite. Any 2-cell in $\Omega^2_\phi(m)$ is the intersection of precisely one pair of 3-cells, hence the cardinality of $\Omega^2_\phi(m)$ is bounded by the square of the cardinality of $\Omega^3_\phi(m)$ and is finite.\newline
\newline
Next, if $ab\in[0,4]$ for some $\{a,b\}\in\Omega^2$, then there is a 2-sided non-peripheral homotopy class $\epsilon$ whose trace is $ab-2\in[-2,2]$, thus contradicting the fact that quasi-Fuchsian representations have neither parabolics nor elliptics.
\end{proof}

\subsection{Markoff-Hurwitz numbers}\label{markoff-hurwitz} 
The \emph{Markoff-Hurwitz equation} is given by
\begin{align}  \label{MarkHur}
a_1^2+...+a_n^2=a_1...a_n.
\end{align}
Its quadratic nature means its solution variety admits a discrete group action generated by $a_i\mapsto a_1...a_n/a_i-a_i$, i.e.\ one can obtain new solutions from old via flips.  Previously, only the $n=3$ solutions of the Markoff-Hurwitz equation were known to have a length spectrum interpretation---for a hyperbolic punctured torus.  One observation of this paper is the association of solutions of the $n=4$ solutions of the Markoff-Hurwitz equation with Markoff quads, and hence the $\SL^{\pm}(2,\mathbb{C})$ relative character variety of the thrice-punctured projective plane. The substitution $a=a_1^2$, $b=a_2^2$, $c=a_3^2$, $d=a_4^2$ into \eqref{quad} defines a map between solutions $(a_1,\ldots,a_4)$ of \eqref{MarkHur} for $n=4$ and Markoff quads.  The map is a quotient by the $Z_2^3$-action $(a_1,a_2,a_3,a_4)\mapsto(\pm a_1,\pm a_2,\pm a_3,\pm a_4)$ (even number of minus signs) on solutions of the Markoff-Hurwitz equation.  Flips of Markoff quads correspond to flips of solutions of the Markoff-Hurwitz equation.

\subsection{Teichm\"uller space}\label{sec:teich}
The remainder of this section deals with Markoff maps corresponding to Fuchsian representations. From the proof of lemma~\ref{lem:markoff}, we see that these are precisely the real Markoff maps. We focus on the Teichm\"uller component of the real relative character variety, showing that it consists of the positive real Markoff maps.

The Teichm\"uller space $\mathcal{T}(S)$ of a surface $S$ encodes all the ways of assigning a complete finite-area hyperbolic metric to $S$, up to homotopy. Concretely, it may be expressed as:
\begin{align*}
\mathcal{T}(S):=\{\, (X,f)\mid f:S\rightarrow X\text{ is a homeomorphism }\}/\sim
\end{align*}
where $(X_1,f_1)\sim(X_2,f_2)$ if and only if
\begin{align*}
f_2\circ f_1^{-1}:X_1\rightarrow X_2
\end{align*}
is homotopy equivalent to a hyperbolic isometry. We denote these equivalence classes, or \emph{marked surfaces}, by $[X,f]$.\newline
\newline
An \emph{ideal triangulation} of $S$ is, up to homotopy, a triangulation of $S$ with vertices at the punctures of $S$. Given a marked surface $[X,f]$, the image $f(\sigma)$ of an arc $\sigma$ on $S$ pulls tight to a unique homotopy equivalent geodesic arc on $X$. Thus, any ideal triangulation on $S$ is represented by an (geodesic) ideal triangulation on $X$ --- a maximal collection of simple bi-infinite geodesic arcs with both ends up cusps. For our purposes, we restrict to ideal triangulations $\triangle$ on thrice-punctured projective planes $S$ representable by paths with \emph{distinct end points}.\newline
\newline
Horocycles of length $1$ around a cusp are always simple on a complete hyperbolic surface. Thus, given an ordered ideal triangulation $(\sigma_1,\sigma_2,\sigma_3,\tau_1,\tau_2,\tau_3)$ on $X$, we obtain lengths $(s_1,s_2,s_3,t_1,t_2,t_3)$ of these infinite geodesic arcs truncated at the three length $1$ horocycles bounding cusps $1,2,3$. The $\lambda$-lengths for $X$ with respect to this ordered ideal triangulation is then given by:
\begin{align*}
(\lambda_1,\lambda_2,\lambda_3,\mu_1,\mu_2,\mu_3)=(\exp\tfrac{1}{2}s_1,\exp\tfrac{1}{2}s_2,\exp\tfrac{1}{2}s_3,\exp\tfrac{1}{2}t_1,\exp\tfrac{1}{2}t_2,\exp\tfrac{1}{2}t_3).
\end{align*}
In \cite{PenDec}, Penner shows that these $\lambda$-lengths form global coordinates on the Teichm\"uller space of any punctured surface. This is also true for the Teichm\"uller space of punctured non-orientable surfaces.\newline
\newline
The following lemma is a topological correspondence which is promoted to a geometric correspondence below. 
\begin{lem}\label{th:bijection}
There is a natural bijection between $\Omega^0$ and
\begin{align*}
\left\{
\text{ the collection of ideal triangulations of }S\text{ with distinct end points }
\right\}
\end{align*}
given by sending $\{\alpha,\beta,\gamma,\delta\}\in\Omega^0$ to the unique (up to homotopy) ideal triangulation where each arc intersects precisely two of the geodesics in $\{\alpha,\beta,\gamma,\delta\}$.
\end{lem}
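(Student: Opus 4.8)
The plan is to build the correspondence explicitly in both directions, and then recover the quadruple of curves from the triangulation in order to see that the two constructions are mutually inverse.

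\emph{The forward map.} Given $\{\alpha,\beta,\gamma,\delta\}\in\Omega^0$, I would first put the four curves into a standard generic position, so that each pair meets transversally in a single point and the $\binom{4}{2}=6$ intersection points are distinct (legitimate by the bigon criterion plus a general-position perturbation). An Euler-characteristic count then shows the four curves fill $N_{1,3}$: the graph $K=\alpha\cup\beta\cup\gamma\cup\delta$ has $6$ vertices and $12$ edges, so $\chi(K)=-6$, whence $N_{1,3}\setminus K$ is a disjoint union of open disks and once-punctured open disks, in fact four disks and three once-punctured disks. For each of the six pairs, say $\{\alpha,\beta\}$, the paragraph before the lemma already supplies an embedded punctured M\"obius strip $M_{\alpha\beta}$ carrying $\alpha$ and $\beta$; inside it there is a unique arc running from the cusp to the two-sided boundary and crossing $\alpha$ once and $\beta$ once, and extending this through the pair of pants cut off by that boundary yields a candidate arc meeting only $\alpha$ and $\beta$ among the four curves, once each, with distinct endpoints. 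The real content is that the six candidate arcs can be realised pairwise disjointly and that their union cuts $N_{1,3}$ into four ideal triangles (four because the area $2\pi|\chi(N_{1,3})|=4\pi$ is the area of four ideal triangles), hence forms an ideal triangulation $\triangle(\{\alpha,\beta,\gamma,\delta\})$ with distinct endpoints and the asserted ``two of the four curves'' incidence; I would verify this by following the candidate arcs through the explicit $(4\text{ disks})+(3\text{ once-punctured disks})$ decomposition of $N_{1,3}\setminus K$, where each arc is forced to be a short concatenation of corner pieces, so the complementary regions can be read off directly.

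\emph{The inverse and bijectivity.} Conversely, given an ideal triangulation $\triangle$ of $N_{1,3}$ with distinct endpoints, the same count forces exactly six arcs and four triangles. I would then show the combinatorial type is rigid — in particular that any three of the four triangles glue up along the three arcs between them into an embedded M\"obius strip — so that its core is an essential one-sided simple closed curve crossing precisely those three arcs, once each. This produces four one-sided curves $\alpha,\beta,\gamma,\delta$, naturally indexed by the triangle each omits; one checks directly that the cores indexed by $T_i$ and $T_j$ meet exactly once, in the interior of the arc bordering the two remaining triangles, so that $\{\alpha,\beta,\gamma,\delta\}\in\Omega^0$ and the arc bordering $T_k,T_l$ is exactly the one crossed by the pair omitting $T_k,T_l$. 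Since the four triangles, hence the four cores, are determined by $\triangle$, and running the forward construction on this quadruple returns $\triangle$, the two maps are mutually inverse; naturality under the extended mapping class group of $N_{1,3}$ is immediate from the topological nature of the constructions. One may also shortcut the bookkeeping: by Scharlemann's connectedness result \cite{SchCom} together with the change-of-coordinates principle, $\Omega^0$ is a single mapping-class-group orbit, so once both constructions are seen to be equivariant it suffices to check well-definedness (and the $\Omega^0$-membership of the recovered quadruple) on one explicit example, e.g.\ the triangulation attached to the Markoff quad $(4,4,4,4)$.

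\emph{Main obstacle.} The delicate step is the simultaneous realisation. Producing each of the six arcs in isolation, and each of the four core curves in isolation, is routine; the work is in showing that the choices are mutually compatible — that there is no obstruction to making all six arcs pairwise disjoint, that the four complementary regions are genuine ideal triangles rather than, say, a once-punctured bigon, and, on the inverse side, that a distinct-endpoint triangulation cannot have a self-glued (M\"obius) triangle so that any three of its triangles really do bound a M\"obius strip. I expect to resolve this either via the explicit $(4,4,4,4)$ model combined with the mapping-class-group transitivity of $\Omega^0$, or by the direct corner-by-corner analysis of $N_{1,3}\setminus K$ sketched above.
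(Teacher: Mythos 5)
Your route is genuinely different from the paper's, and the difference is exactly where your proposal is incomplete. The paper never puts the four curves in general position or analyses complementary regions. Instead it runs a three-way correspondence mediated by two-sided curves: every essential non-peripheral two-sided simple closed curve on $N_{1,3}$ is simultaneously the boundary of a thickened pair of once-intersecting one-sided curves (the punctured M\"obius strip of figure~\ref{fig:mob}) \emph{and} the boundary of a thickened arc between distinct punctures. Pairs of intersection points between two such two-sided curves then correspond both to single intersection points of the associated one-sided pairs and to single intersection points of the associated arcs (punctures counting as single intersection points). Thus ``the six arcs are pairwise disjoint outside the punctures'' translates directly into ``the four one-sided curves pairwise intersect exactly once'', which is the definition of $\Omega^0$; existence, uniqueness and bijectivity all fall out of this dictionary at once. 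That single observation disposes of precisely the step you correctly isolate as the main obstacle --- the simultaneous disjoint realisation of the six arcs and the identification of the complementary regions --- and you do not carry that step out: you offer two candidate resolutions (a corner-by-corner analysis of $N_{1,3}\setminus K$, or mapping-class-group transitivity plus one model) but execute neither, so the central claim of the lemma is asserted rather than proved.

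Two further points. The claim that ``an Euler-characteristic count then shows the four curves fill'' is not right as stated: $\chi(N_{1,3}\setminus K)=4$ is consistent with, say, an essential annulus component compensated by extra disks, so filling needs a separate argument (it is again immediate from the paper's correspondence, since a curve disjoint from all four would be disjoint from all six arcs of a triangulation). Your transitivity shortcut is salvageable --- $F$ acts simply transitively on the vertices of the $4$-regular tree, so $\Omega^0$ is one orbit, and you do construct an explicit inverse --- but be aware that it is not obviously non-circular in the paper's logical order: the analysis of $\Gamma(N_{1,3})$ in subsection~\ref{sec:mcg} invokes Lemma~\ref{th:bijection}, so you would need to extract the transitivity of the flips on $\Omega^0$ from Scharlemann's connectivity result and the explicit homeomorphism $f_4$ alone, and then still verify well-definedness and uniqueness of the triangulation on the chosen model.
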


\begin{proof}
Any essential two-sided simple closed curve is a boundary component of a thickening of a unique pair of once-intersecting one-sided simple closed curves (figure~\ref{fig:mob}), and a boundary component of a thickening of a unique arc joining distinct punctures.  By alternately thinking of a two-sided curve as boundary components of these two thickenings, we see that pairs of intersection points between two 2-sided simple closed curves correspond to single intersection points between two one-sided simple closed curves and between two arcs joining distinct punctures (where the punctures count as single intersection points). Hence the six arcs obtained in this way  are disjoint outside the punctures if and only if the homotopy classes in
$\{\alpha,\beta,\gamma,\delta\}\in\Omega^0$ pairwise intersect exactly once.
\end{proof}
\begin{figure}[ht]
\begin{center}
\includegraphics[scale=.6]{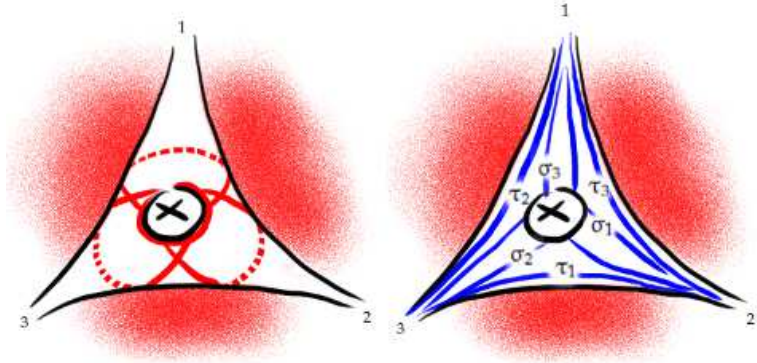}	
\end{center}
\caption{A 4-tuple of {\color{red}curves} corresponding to a {\color{blue}triangulation}.}
\label{fig:triangulation}
\end{figure}
\begin{lem}
The $\lambda$-lengths for an ideal triangulation $\triangle$ of $N_{1,3}$ identifies the Teichm\"ull\-er space $\mathcal{T}(N_{1,3})$ as:
\begin{align*}
\left\{
\hspace{-1mm}\begin{array}{r|l}&\mu_1\mu_2\mu_3+\mu_1\lambda_2\lambda_3+\lambda_1\mu_2\lambda_3+\lambda_1\lambda_2\mu_3=\lambda_1\lambda_2\mu_1\mu_2\\
(\lambda_1,\lambda_2,\lambda_3,\mu_1,\mu_2,\mu_3)\in\mathbb{R}_+^6 
 & \mu_1\mu_2\mu_3+\mu_1\lambda_2\lambda_3+\lambda_1\mu_2\lambda_3+\lambda_1\lambda_2\mu_3=\lambda_1\lambda_3\mu_1\mu_3\\
  & \mu_1\mu_2\mu_3+\mu_1\lambda_2\lambda_3+\lambda_1\mu_2\lambda_3+\lambda_1\lambda_2\mu_3=\lambda_2\lambda_3\mu_2\mu_3
\end{array}
\right\}.
\end{align*}
\end{lem}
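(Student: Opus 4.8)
The plan is to obtain the description directly from Penner's $\lambda$-length coordinates on \emph{decorated} Teichm\"uller space, which are available for punctured non-orientable surfaces as noted just above, and then to restrict from the decorated to the undecorated space by imposing the length-$1$ normalisation on the three cusp horocycles that was used to define the $\lambda$-lengths in the first place. Concretely: once $\triangle$ is fixed, Penner's theorem gives a real-analytic bijection from the decorated Teichm\"uller space $\widetilde{\mathcal{T}}(N_{1,3})$ --- that is, $\mathcal{T}(N_{1,3})$ together with a choice of horocycle at each of the three cusps --- onto $\mathbb{R}_+^6$, sending a decorated surface to its six $\lambda$-lengths $(\lambda_1,\lambda_2,\lambda_3,\mu_1,\mu_2,\mu_3)$. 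Since the horocycle length about a cusp varies continuously and monotonically from $0$ to $\infty$, every point of $\mathcal{T}(N_{1,3})$ has a canonical decoration by the length-$1$ horocycles, and this identifies $\mathcal{T}(N_{1,3})$ with the locus in $\widetilde{\mathcal{T}}(N_{1,3})$ where all three horocycles have length $1$. So the task reduces to writing the condition ``the horocycle about cusp $k$ has length $1$'', for $k=1,2,3$, as a polynomial equation in the $\lambda$-lengths and checking that the three equations so obtained are the displayed ones.

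For that I would use Penner's formula: in an ideal triangle whose sides, opposite its three ideal vertices, have $\lambda$-lengths $p$, $q$, $r$, the horocyclic segment cut off at the vertex opposite the $\lambda$-length-$p$ side has length $p/(qr)$, and the length of the horocycle about a cusp is the sum of these contributions over all corners of $\triangle$ incident to that cusp. Reading off the combinatorial type of $\triangle$ from Figure~\ref{fig:triangulation} (equivalently, from the bijection of Lemma~\ref{th:bijection}): $\triangle$ consists of four ideal triangles, each having exactly one vertex at each of the three cusps; the six edges pair up so that $\lambda_i$ and $\mu_i$ are the two edges joining the two cusps other than cusp $i$; and the four triangles are $\{\lambda_1,\lambda_2,\mu_3\}$, $\{\lambda_1,\lambda_3,\mu_2\}$, $\{\lambda_2,\lambda_3,\mu_1\}$ and $\{\mu_1,\mu_2,\mu_3\}$, so that each cusp receives exactly one corner from each triangle. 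At the cusp whose four corners are the ones opposite $\mu_3$, $\lambda_3$, $\lambda_3$, $\mu_3$ in those four triangles respectively, Penner's formula gives $\mu_3/(\lambda_1\lambda_2)+\lambda_3/(\lambda_1\mu_2)+\lambda_3/(\lambda_2\mu_1)+\mu_3/(\mu_1\mu_2)=1$, which after multiplying through by $\lambda_1\lambda_2\mu_1\mu_2$ becomes exactly $\mu_1\mu_2\mu_3+\mu_1\lambda_2\lambda_3+\lambda_1\mu_2\lambda_3+\lambda_1\lambda_2\mu_3=\lambda_1\lambda_2\mu_1\mu_2$. Applying the cyclic relabelling $1\mapsto 2\mapsto 3\mapsto 1$ of the cusps yields the other two displayed equations; the left-hand side is common to all three precisely because that polynomial is invariant under this relabelling. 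Together with the first paragraph, this identifies $\mathcal{T}(N_{1,3})$ with the stated subvariety of $\mathbb{R}_+^6$.

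The hard part will not be the algebra --- clearing denominators above is routine --- but two setup points. First, I must be sure Penner's decorated coordinates and his horocycle-length formula are correctly transported to the non-orientable surface $N_{1,3}$; this can be done either by pulling $\triangle$ back to an ideal triangulation of the orientation double cover and working equivariantly, or by citing the non-orientable extension of Penner's construction already invoked above. Second, I need the combinatorial type of $\triangle$ pinned down exactly --- which corner of which triangle sits at which cusp, and which edge it faces --- since it is this that produces the at-first-surprising common left-hand side of the three equations; reading it off carefully from Figure~\ref{fig:triangulation} (or checking that every ideal triangulation of $N_{1,3}$ with distinct endpoints has this type) is where the real care is needed.
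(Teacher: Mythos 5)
The paper states this lemma without proof, so there is nothing to compare against directly; your argument is correct and is clearly the intended one. Penner's corner formula $p/(qr)$ applied to the four triangles you list gives exactly the horocyclic segment lengths $H_a,H_b,H_c,H_d$ appearing in Lemma~\ref{lem:horo} and Figure~\ref{fig:horocycle} (e.g.\ $\mu_1/(\lambda_2\lambda_3)=\sqrt{ad}/\sqrt{a^2bc}=\sqrt{d/abc}=H_d$), which independently confirms your reading of the combinatorial type of $\triangle$ --- each triangle carries one edge of each index $i$ and hence one corner at each cusp, the corner at cusp $i$ facing the index-$i$ edge. Dividing your cusp-$3$ equation by $\lambda_1\lambda_2\mu_1\mu_2$ reproduces the first displayed relation, and the cyclic symmetry of the common left-hand side gives the other two, so the proof is complete once the non-orientable version of Penner's theorem (already invoked by the paper) is cited.
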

These $\lambda$-lengths of an ideal triangulation may be expressed in terms of the Markoff quad of the associated quadruple of one-sided geodesics in $\Omega^0$ corresponding to the  used to the define these $\lambda$-lengths. 
\begin{align}  \label{quambda}
&(a,b,c,d)=
\left(\frac{\lambda_2\lambda_3}{\lambda_1}, \frac{\lambda_1\lambda_3}{\lambda_2},\frac{\lambda_1\lambda_2}{\lambda_3}, \frac{\mu_1\mu_2}{\lambda_3}=\frac{\mu_1\mu_3}{\lambda_2}=\frac{\mu_2\mu_3}{\lambda_1}\right),\\
&(\lambda_1,\lambda_2,\lambda_3,\mu_1,\mu_2,\mu_3)=(\sqrt{bc},\sqrt{ac},\sqrt{ab},\sqrt{ad},\sqrt{bd},\sqrt{cd}\,).
\notag
\end{align}

Thus, we may also use positive Markoff quads to globally parametrise the Teichm\"uller space:
\begin{prop}   \label{th:teich}
Given an ordered 4-tuple $(\alpha,\beta,\gamma,\delta)$ intersecting a fixed triangulation on $N_{1,3}$ as per figure~\ref{fig:triangulation}, then the map
\begin{align*}
\mathcal{T}(N_{1,3})&\rightarrow \left\{(a,b,c,d)\in\mathbb{R}_+^4\mid(a+b+c+d)^2=abcd\right\}\\
[X,f]&\mapsto (2\sinh\tfrac{1}{2}\ell_{\alpha}(X),2\sinh\tfrac{1}{2}\ell_{\beta}(X),2\sinh\tfrac{1}{2}\ell_{\gamma}(X),2\sinh\tfrac{1}{2}\ell_{\delta}(X))
\end{align*}
is a real-analytic diffeomorphism, where $\ell_{\alpha}(X)$ denotes the length of the geodesic representative of $f_*(\alpha)$ on $X$. We call these global coordinates the \emph{trace coordinates} for $\mathcal{T}(N_{1,3})$.
\end{prop}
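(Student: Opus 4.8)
The plan is to factor the map of the statement through Penner's $\lambda$-length coordinates. By Lemma~\ref{th:bijection}, the ordered $4$-tuple $(\alpha,\beta,\gamma,\delta)$ is the $0$-cell of $\Omega$ dual to the fixed ideal triangulation $\triangle$ of figure~\ref{fig:triangulation}, and the $\lambda$-length lemma above (which rests on Penner~\cite{PenDec}) identifies $\mathcal{T}(N_{1,3})$, through the $\lambda$-lengths of $\triangle$ taken with respect to the canonical length-$1$ horocycles, with the real-analytic submanifold
\[V=\{(\lambda_1,\lambda_2,\lambda_3,\mu_1,\mu_2,\mu_3)\in\mathbb{R}_+^6\mid\text{the three }\lambda\text{-length relations hold}\},\]
the identification $\mathcal{T}(N_{1,3})\to V$ being a real-analytic diffeomorphism. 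Writing $W:=\{(a,b,c,d)\in\mathbb{R}_+^4\mid(a+b+c+d)^2=abcd\}$ for the target, equation~\eqref{quambda} describes a substitution
\[F\colon(\lambda_1,\lambda_2,\lambda_3,\mu_1,\mu_2,\mu_3)\longmapsto\Bigl(\tfrac{\lambda_2\lambda_3}{\lambda_1},\tfrac{\lambda_1\lambda_3}{\lambda_2},\tfrac{\lambda_1\lambda_2}{\lambda_3},\tfrac{\mu_1\mu_2}{\lambda_3}\Bigr),\]
and by the Fricke trace identity recalled in section~\ref{sec:mq} the Markoff quad of $(\alpha,\beta,\gamma,\delta)$ on a marked surface $[X,f]$ is precisely $(2\sinh\tfrac{1}{2}\ell_\alpha(X),\dots,2\sinh\tfrac{1}{2}\ell_\delta(X))$, which is exactly what $F$ computes from the $\lambda$-lengths of $\triangle$ on $X$. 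Hence, once $F$ is seen to carry $V$ into $W$, the map of the proposition is $F|_V$ postcomposed with $\mathcal{T}(N_{1,3})\to V$, and it remains only to show that $F|_V\colon V\to W$ is a real-analytic diffeomorphism.

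First I would verify that $F$ is well defined on $V$ and lands in $W$. Subtracting the three $\lambda$-length relations in pairs forces $\lambda_1\mu_1=\lambda_2\mu_2=\lambda_3\mu_3$, so the three candidate expressions $\mu_1\mu_2/\lambda_3$, $\mu_1\mu_3/\lambda_2$, $\mu_2\mu_3/\lambda_1$ for the fourth coordinate coincide; feeding the resulting values into any one of the three relations and using $\sqrt{abcd}=a+b+c+d>0$ converts it into $(a+b+c+d)^2=abcd$, so $F(V)\subseteq W$. Conversely I would introduce $G\colon(a,b,c,d)\mapsto(\sqrt{bc},\sqrt{ac},\sqrt{ab},\sqrt{ad},\sqrt{bd},\sqrt{cd})$, which is real-analytic on $\mathbb{R}_+^4$, check that $G(W)\subseteq V$ — each of the three $\lambda$-length relations reduces to $(a+b+c+d)\sqrt{abcd}=abcd$, true on $W$ because $\sqrt{abcd}=a+b+c+d$ — and verify $F\circ G=\mathrm{id}_W$ and $G\circ F=\mathrm{id}_V$ by direct substitution, the second again using that $\lambda_i\mu_i$ is independent of $i$ on $V$. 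Since $F$ is assembled from ratios of positive real-analytic functions and $G$ from square roots of positive real-analytic functions, both are real-analytic; therefore $F|_V$ is a real-analytic diffeomorphism with inverse $G|_W$, and composing with the real-analytic diffeomorphism $\mathcal{T}(N_{1,3})\to V$ completes the proof.

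The one step I expect to require real care is surjectivity — that \emph{every} positive real Markoff quad is realised by an actual hyperbolic structure — and this is precisely what the explicit inverse $G$ settles, provided one is scrupulous that $G(W)$ satisfies all three Penner relations, i.e.\ that the single equation $(a+b+c+d)^2=abcd$ is no weaker than those three relations together with the consistency condition $\lambda_1\mu_1=\lambda_2\mu_2=\lambda_3\mu_3$. No boundary phenomena or square-root ambiguities enter, since every map in play is between open subsets of positive orthants on which $\sqrt{\cdot}$ is real-analytic and single-valued.
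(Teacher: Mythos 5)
Your route is the same as the paper's: both factor the trace map through Penner's $\lambda$-length coordinates and reduce the proposition to the change of variables \eqref{quambda}. Your algebraic bookkeeping is correct and in fact more complete than the paper's one-line argument — deducing $\lambda_1\mu_1=\lambda_2\mu_2=\lambda_3\mu_3$ from the three relations, checking that each relation becomes $(a+b+c+d)\sqrt{abcd}=abcd$, and exhibiting the explicit analytic inverse $G$ all check out, and this is exactly what is needed to see that \eqref{quambda} really is a bijection between the two real-analytic models.

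The one step you gloss over is the geometric content of \eqref{quambda} itself, and your attribution of it is wrong. The Fricke trace identity only shows that $(2\sinh\tfrac{1}{2}\ell_\alpha(X),\dots,2\sinh\tfrac{1}{2}\ell_\delta(X))$ \emph{satisfies} the Markoff quad equation; it says nothing about that tuple being equal to $\bigl(\tfrac{\lambda_2\lambda_3}{\lambda_1},\tfrac{\lambda_1\lambda_3}{\lambda_2},\tfrac{\lambda_1\lambda_2}{\lambda_3},\tfrac{\mu_1\mu_2}{\lambda_3}\bigr)$ for the $\lambda$-lengths of the dual triangulation. That identity — that the trace of the one-sided geodesic $\alpha$ is literally this ratio of truncated arc-length exponentials — is the genuinely geometric input, and it is precisely the step the paper's proof isolates as requiring hyperbolic trigonometry together with repeated use of the ideal Ptolemy relation (citing \cite{HuaMod}). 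Without it, your argument only shows that $F$ is a real-analytic diffeomorphism from Penner's variety $V$ onto the Markoff quad variety $W$; it does not show that the composite $\mathcal{T}(N_{1,3})\to V\to W$ is the map in the statement of the proposition. So you should either derive $2\sinh\tfrac{1}{2}\ell_\alpha(X)=\lambda_2\lambda_3/\lambda_1$ (and its companions) from the cross-ratio/Ptolemy description of $\lambda$-lengths, or cite it correctly; everything else in your write-up stands.
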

\begin{proof}
With a little hyperbolic trigonometry and successive applications of the ideal Ptolemy relation \cite{HuaMod}, we can show that \eqref{quambda} explicitly gives the desired diffeomorphism between the trace coordinates and the $\lambda$-coordinates for $\mathcal{T}(N_{1,3})$. The fact that this map is real-analytic is then a simple consequence of the real-analyticity of the $\lambda$-lengths.
\end{proof}
\begin{cor}
The set of positive Markoff quads is the Teichm\"uller component of the real character variety.
\end{cor}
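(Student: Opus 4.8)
The plan is to deduce the corollary from Proposition~\ref{th:teich} by checking that the set $\mathcal{Q}_+$ of positive Markoff quads is a connected component of the real relative character variety. First I would identify the latter with the real Markoff quad variety $\mathcal{Q}_{\mathbb{R}}:=\{(a,b,c,d)\in\mathbb{R}^4\mid(a+b+c+d)^2=abcd\}$: by Lemma~\ref{lem:markoff}, together with the remark of \S\ref{sec:teich} that real Markoff maps are exactly the real-coefficient characters, a real character is recorded by its Markoff quad at a fixed base $0$-cell, and conversely \eqref{eq:edge}, being division-free, propagates any real Markoff quad along the tree $\Omega^1$ to a well-defined real Markoff map; the resulting correspondence $\mathcal{Q}_{\mathbb{R}}\leftrightarrow\{\text{real characters}\}$ is plainly bi-real-analytic, and under it the trace-coordinate map of Proposition~\ref{th:teich} becomes the holonomy embedding of $\mathcal{T}(N_{1,3})$, with image $\mathcal{Q}_+$.

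Next I would verify that $\mathcal{Q}_+$ is both open and closed in $\mathcal{Q}_{\mathbb{R}}$. Openness is immediate, since $\mathcal{Q}_+=\mathcal{Q}_{\mathbb{R}}\cap(0,\infty)^4$ is the intersection with an open orthant. Closedness is the one substantive point. The key observation is that every positive Markoff quad satisfies $a+b+c+d\geq 16$: writing $s=a+b+c+d>0$, the AM--GM inequality gives $abcd\leq\bigl(\tfrac{s}{4}\bigr)^{4}$, and combining with $s^{2}=(a+b+c+d)^{2}=abcd$ yields $s^{2}\leq s^{4}/256$, hence $s\geq 16$ (with equality only at $(4,4,4,4)$). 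Consequently, if $(a_n,b_n,c_n,d_n)\in\mathcal{Q}_+$ converges to some $(a,b,c,d)\in\mathcal{Q}_{\mathbb{R}}$, then all coordinates of the limit are $\geq 0$ and $a+b+c+d\geq 16$; were some coordinate, say $d$, equal to $0$, then $(a+b+c)^{2}=abcd=0$ would force $a=b=c=0$ and hence $a+b+c+d=0$, a contradiction. So the limit lies in $\mathcal{Q}_+$; equivalently $\mathcal{Q}_+=\mathcal{Q}_{\mathbb{R}}\cap\{x_i\geq 0\ \forall\, i\}\cap\{x_1+x_2+x_3+x_4\geq 16\}$ is cut out by closed conditions, so $\mathcal{Q}_+$ is closed in $\mathcal{Q}_{\mathbb{R}}$.

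Finally, Teichm\"uller space $\mathcal{T}(N_{1,3})$ is connected (a standard fact, provable here via global Fenchel--Nielsen or $\lambda$-length coordinates), so by Proposition~\ref{th:teich} its diffeomorphic image $\mathcal{Q}_+$ is a connected subset of $\mathcal{Q}_{\mathbb{R}}$. A connected subset that is simultaneously open and closed is a connected component, so $\mathcal{Q}_+$ is a connected component of the real relative character variety --- the one containing the image of Teichm\"uller space, that is, the Teichm\"uller component.

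I expect the closedness step to be the only genuine obstacle. A priori one might fear that a sequence of Fuchsian (positive) characters degenerates to a non-positive real character; indeed $\mathcal{Q}_{\mathbb{R}}$ does contain non-positive real Markoff quads such as $(-1,-1,2,2)$, and the explicit path $s\mapsto\bigl(-\tfrac{2s}{2+s},-\tfrac{2s}{2+s},s,s\bigr)$, $s\in[0,2]$, joins such a quad inside $\mathcal{Q}_{\mathbb{R}}$ to the singular point $(0,0,0,0)$. The bound $a+b+c+d\geq 16$ is exactly what keeps $\mathcal{Q}_+$ uniformly away from the coordinate hyperplanes, so that it is closed as well as open, and the argument goes through.
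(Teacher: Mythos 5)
Your argument is correct and is essentially the paper's: both rest on Proposition~\ref{th:teich} for the identification with Teichm\"uller space and on the AM--GM inequality applied to $(a+b+c+d)^2=abcd$ to show that positive quads cannot degenerate onto a coordinate hyperplane. Your repackaging of AM--GM as the uniform bound $a+b+c+d\geq 16$, together with the explicit openness and connectedness checks, is a slightly cleaner and more complete write-up of the same idea.
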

\begin{proof}
Proposition~\ref{th:teich} proves that the set of positive Markoff quads is real-analytic\-ally diffeomorphic to Teichm\"uller space.  It remains to show that it is a connected component of the real character variety.  Suppose that one of the coordinates vanishes, say $d=0$.  Then by \eqref{quad}, $a+b+c=0$.  But if this point lies in the limit of a path in the set of \emph{positive} Markoff quads then each of $a$, $b$ and $c$ must tend to 0 along the path.  In particular, at some point on the path $abcd<256$. But this contradicts \eqref{quad} since
\begin{align*}
(a+b+c+d)^2\geq 16\sqrt{abcd}>abcd
\end{align*}
where the first inequality is the arithmetic mean-geometric mean inequality.
\end{proof}

\subsection{The mapping class group.}   \label{sec:mcg}
Markoff quads are points on the $\SL^{\pm}(2,\mathbb{C})$ relative character variety of the thrice-punctured projective plane, which is the hypersurface in $\mathbb{C}^4$ defined by equation~\eqref{quad}. The transformation \eqref{flip},
\begin{align*} 
(a,b,c,d)\mapsto(a,b,c,d'=abc-2a-2b-2c-d),
\end{align*}
combined with the following even permutations
\begin{align}
(a,b,c,d)\mapsto(b,a,d,c),(c,d,a,b),(d,c,b,a)
\end{align}
generate the pure mapping class group of the thrice-punctured projective plane, and specify its action on the corresponding relative character variety. \emph{Pure} here means that we restrict to elements of the mapping class group that fix punctures.\newline
\newline
We construct an explicit homeomorphism $f_4:N_{1,3}\rightarrow N_{1,3}$ that takes $(\alpha,\beta,\gamma,\delta)$ to $(\alpha,\beta,\gamma,\delta')$. Consider the hexagonal fundamental domain of $N_{1,3}$ obtained by cutting along $\sigma_1,\sigma_2$ and $\sigma_3$.
\begin{figure}[ht]
\begin{center}
\includegraphics[scale=.6]{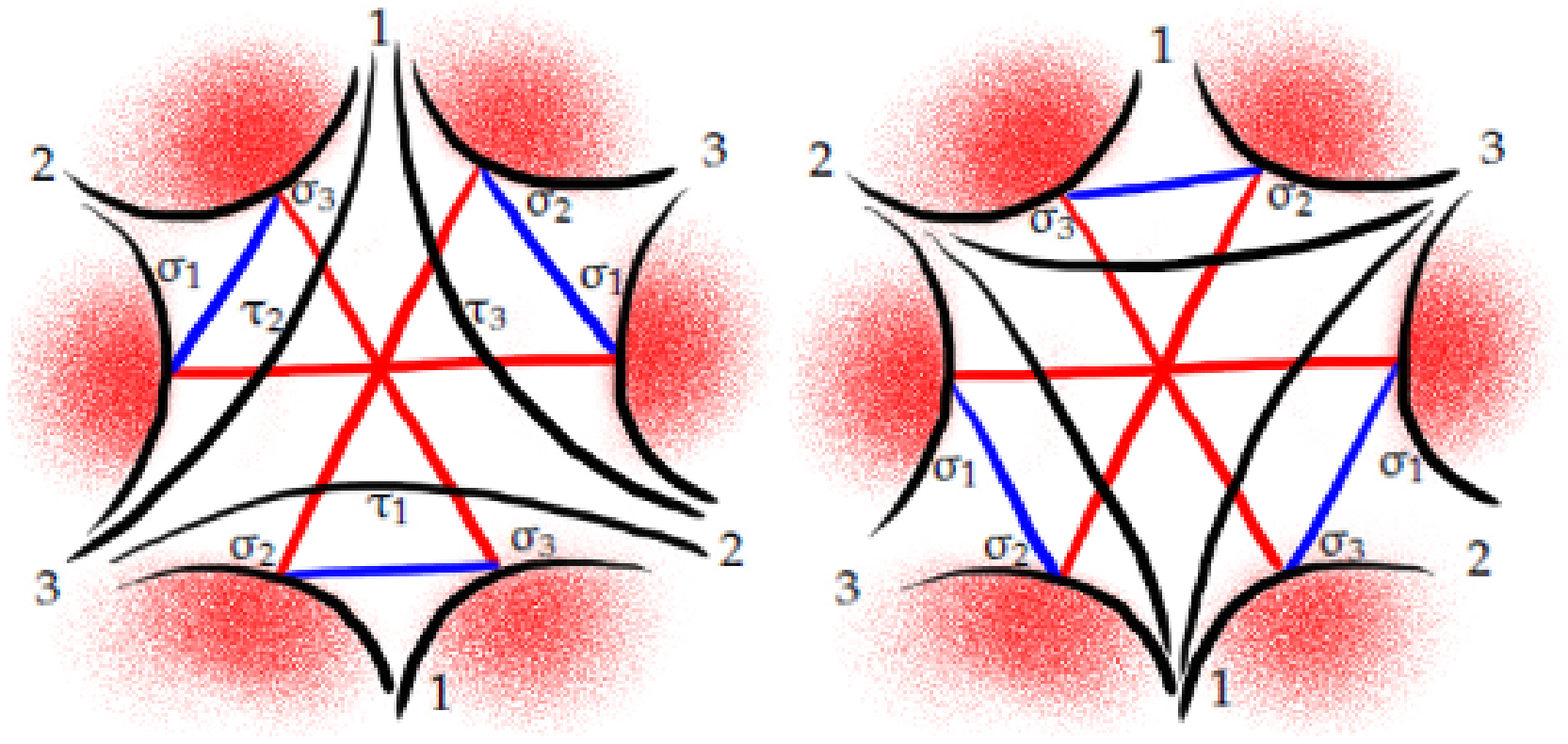}	
\end{center}
\caption{The map $f_4$ fixing ${\color{red}\alpha},{\color{red}\beta},{\color{red}\gamma}$, but switching ${\color{blue}\delta}$ and ${\color{blue}\delta'}$.}
\label{fig:flipmap}
\end{figure}
From figure~\ref{fig:flipmap}, we see that a rotation by $\pi$ of this fundamental domain fixes the labeling of the punctures and fixes each of $\alpha$, $\beta$ and $\gamma$ whilst taking $\alpha$ to $\alpha'$. The action of the mapping class $[f_4]\in\Gamma(N_{1,3})$ therefore takes the Markoff quad $(a,b,c,d)$ corresponding to a marked surface $[X,f]$ to
\begin{align*}
[f_4](a,b,c,d)=(a,b,c,abc-2a-2b-2c-d),
\end{align*}
that is: $[f_4]$ corresponds to a flip in the fourth entry. By symmetry, there are four flips $[f_1],[f_2],[f_3],[f_4]\in\Gamma(N_{1,3})$ which flip the corresponding entries of $(a,b,c,d)$. Let $F\leq\Gamma(N_{1,3})$ denote the subgroup generated by these four flips.

\begin{lem}
$F\cong\mathbb{Z}_2\ast\mathbb{Z}_2\ast\mathbb{Z}_2\ast\mathbb{Z}_2$, where each $\mathbb{Z}_2$ is generated by one of the $[f_i]$.
\end{lem}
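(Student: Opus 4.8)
The plan is to prove the freeness of $F$ by exhibiting a ping-pong (Table-Tennis Lemma) setup on the relative character variety, using the four flip involutions $[f_1],[f_2],[f_3],[f_4]$. Since each $[f_i]$ is manifestly an involution (applying the flip $d\mapsto abc-2a-2b-2c-d$ twice returns the original $d$, and similarly for the other three coordinates), the only thing to establish is that no nontrivial reduced word $w=[f_{i_1}][f_{i_2}]\cdots[f_{i_k}]$ with $i_j\neq i_{j+1}$ acts as the identity. Rather than work on the full complex character variety, I would work on the positive real Markoff quads, which by the corollary following Proposition~\ref{th:teich} form (a connected component of) the real character variety and are real-analytically diffeomorphic to $\mathcal{T}(N_{1,3})$; since $F$ acts by polynomial automorphisms, if $w$ acts trivially on this open set it acts trivially everywhere by analytic continuation.

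The key observation to exploit is the following monotonicity of flips, which comes from the edge relation $d+d'+2a+2b+2c=abc$ together with $dd'=(a+b+c)^2$: given a positive Markoff quad $(a,b,c,d)$, one of $d,d'$ is at least $a+b+c$ and the other at most $a+b+c$, so a flip either strictly increases or strictly decreases the flipped coordinate (the boundary case $d=d'=a+b+c$ being a single fixed point of $[f_4]$, which one checks does not obstruct the argument after a generic choice of basepoint). First I would set up, for each $i\in\{1,2,3,4\}$, the subset $U_i$ of positive Markoff quads on which the $i$-th coordinate is the strictly larger of its two flip-partners, i.e.\ where applying $[f_i]$ strictly decreases the $i$-th coordinate; equivalently $U_i$ is where the $i$-th coordinate exceeds the sum of the other three. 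One checks that $[f_i](U_j)\subseteq U_i$ for $j\neq i$: flipping coordinate $i$ makes it large (it lands in $U_i$), while flipping coordinate $j$ when we are not in $U_j$ makes coordinate $j$ large, so its complement behaves correctly. This is the standard ping-pong configuration: the sets $U_1,\dots,U_4$ are disjoint (at most one coordinate can exceed the sum of the other three, by the AM–GM argument already used in the excerpt: $(a+b+c+d)^2=abcd$ forces no coordinate to dominate too wildly — in fact on the Teichm\"uller component one can pin this down exactly), and any reduced word $w=[f_{i_1}]\cdots[f_{i_k}]$ applied to a point in some $U_j$ with $j\neq i_k$ lands in $U_{i_1}$, hence moves that point and cannot be the identity.

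The main obstacle I anticipate is handling the degenerate locus cleanly: the flip $[f_i]$ has a fixed hypersurface (where the $i$-th coordinate equals $a+b+c$ with the sign conventions above, i.e.\ the discriminant locus of the quadratic $p(x)$), and near it the strict inequalities defining $U_i$ and its complement both fail, so the ping-pong sets do not cover everything. The remedy is to run the argument on the dense open set where all the relevant discriminants are nonzero and to observe that $F$-translates of this set are again dense open, so a word acting as the identity there acts as the identity on the whole component, hence everywhere by analyticity; alternatively one can thicken the $U_i$ slightly and verify the ping-pong inclusions survive. A secondary, purely bookkeeping point is matching the even-permutation action $(a,b,c,d)\mapsto(b,a,d,c)$ etc.\ with the labelling so that all four $[f_i]$ are simultaneously available as the stated coordinate flips; this is already spelled out in subsection~\ref{sec:mcg} and requires no further work. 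Once ping-pong applies, the Table-Tennis Lemma immediately yields $F\cong\mathbb{Z}_2\ast\mathbb{Z}_2\ast\mathbb{Z}_2\ast\mathbb{Z}_2$ with each free factor generated by the corresponding $[f_i]$.
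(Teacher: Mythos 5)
Your ping-pong argument is correct, but it is not the route the paper takes. The paper's proof is a two-line argument riding on combinatorial structure already in place: the flips act on the $1$-skeleton of the dual curve complex $\Omega$, which (by Scharlemann's theorem) is a $4$-regular tree, and a reduced word in the $[f_i]$ traces a non-backtracking path from the base vertex $\{\alpha,\beta,\gamma,\delta\}$, hence strictly increases distance from the origin at every step and cannot act trivially. Your version replaces that combinatorial input with explicit algebra on the Teichm\"uller component: the sets $U_i=\{\,i\text{-th coordinate}>\text{sum of the other three}\,\}$ are disjoint (adding two such inequalities gives $0>$ a sum of positive terms --- no AM--GM is needed), nonempty (e.g.\ $(4,4,4,36)\in U_4$), and the inclusion $[f_i](U_j)\subseteq U_i$ for $j\neq i$ does follow from $dd'=(a+b+c)^2$ exactly as you indicate, since being in $U_j$, $j\neq i$, forces the $i$-th coordinate strictly below the sum of the other three. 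Two bookkeeping points: in the final step the basepoint must be taken in some $U_j$ with $j\notin\{i_1,i_k\}$ (both ends of the word), which is possible precisely because there are four generators and only two forbidden indices; and your worry about the degenerate locus evaporates once the basepoint is chosen inside some $U_j$, because every subsequent image then lands strictly inside one of the $U_i$ and the fixed hypersurfaces $\{d=a+b+c\}$ are never met, so no thickening or density argument is required. As for what each approach buys: the paper's is essentially free given the tree structure of $\Omega$ (which is needed throughout the paper anyway), whereas yours is self-contained modulo the edge relation \eqref{eq:edge}, and your walls $\{d=a+b+c\}$ are exactly the hyperplanes $H_d=\tfrac12$ cutting out the octahedral fundamental domain in the proof of theorem~\ref{thm:moduli}, so the same computation gets reused there.
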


\begin{proof}
First observe that each $[f_i]$ is indeed order $2$. To see that there are no other relations, consider the action of a reduced string of flips on the 1-skeleton of the curve complex: since the 1-skeleton is a 4-regular tree, performing each flip in a sequence of flips necessarily takes us farther from the origin.
\end{proof}

We now consider a different subgroup in $\Gamma(N_{1,3})$: the stabiliser of $\{\alpha,\beta,\gamma,\delta\}$. Due to lemma~\ref{th:bijection}, this subgroup must also stabilise $\triangle$ --- the triangulation corresponding to $\{\alpha,\beta,\gamma,\delta\}$.

\begin{lem}
$\mathrm{Stab}(\triangle)\cong\mathbb{Z}_2\times\mathbb{Z}_2$.
\end{lem}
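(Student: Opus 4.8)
The plan is, via Lemma~\ref{th:bijection} (whose bijection is mapping-class-group equivariant), to replace $\mathrm{Stab}(\triangle)$ by the stabiliser $\mathrm{Stab}_{\Gamma(N_{1,3})}(v_0)$ of the vertex $v_0=\{\alpha,\beta,\gamma,\delta\}\in\Omega^0$ --- i.e.\ of the unordered four-element set of one-sided curves --- and then to show that the natural action of $\mathrm{Stab}_{\Gamma(N_{1,3})}(v_0)$ on the four curves of $v_0$ is an isomorphism onto the Klein four-group $\{e,(12)(34),(13)(24),(14)(23)\}\leq S_4$.

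That the homomorphism $\mathrm{Stab}_{\Gamma(N_{1,3})}(v_0)\to S_4$ is injective I would prove by recalling from the construction in Lemma~\ref{th:bijection} that the six arcs of $\triangle$ correspond bijectively to the six unordered pairs of curves in $v_0$ (each arc meets precisely two of the four curves): a mapping class inducing the trivial permutation of $\{\alpha,\beta,\gamma,\delta\}$ therefore fixes every arc of $\triangle$, and since $\triangle$ has distinct endpoints and the pure mapping class group fixes each puncture, it is the identity on each arc and hence isotopic to the identity. For surjectivity onto the Klein four-group, note that the three permutation mapping classes inducing $(a,b,c,d)\mapsto(b,a,d,c),(c,d,a,b),(d,c,b,a)$ (Subsection~\ref{sec:mcg}) fix the set $\{\alpha,\beta,\gamma,\delta\}$ and realise the three nontrivial double transpositions, so the image contains that group. (In fact, writing $V$ for the subgroup these three classes generate, conjugating a flip by any of them is again a flip --- for instance $p[f_4]p^{-1}=[f_3]$ for $p\colon(a,b,c,d)\mapsto(b,a,d,c)$ --- so $V$ normalises $F$ and $FV=\Gamma(N_{1,3})$ by Subsection~\ref{sec:mcg}, while $\mathrm{Stab}_F(v_0)=\{e\}$ by the tree argument behind $F\cong\mathbb{Z}_2\ast\mathbb{Z}_2\ast\mathbb{Z}_2\ast\mathbb{Z}_2$; hence any $g=fv$ fixing $v_0$ has $f=e$, i.e.\ $g\in V$, giving $\mathrm{Stab}(\triangle)=V$ directly.)

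What remains --- and what I expect to be the delicate point --- is that the image is no larger than this Klein four-group. For this I would use that every element of $\mathrm{Stab}(\triangle)$ belongs to the \emph{pure} mapping class group and hence fixes each of the three punctures of $N_{1,3}$, combined with a natural bijection between the three punctures and the three perfect matchings of $\{\alpha,\beta,\gamma,\delta\}$: a pair $\{\gamma_i,\gamma_j\}$ of once-intersecting one-sided curves spans an embedded once-punctured M\"obius strip and so singles out the puncture inside it, and one checks --- from the hexagonal fundamental domain (Figure~\ref{fig:flipmap}) or Figure~\ref{fig:triangulation} --- that two complementary pairs single out the same puncture. Since the induced permutation of the perfect matchings is exactly the image of $\mathrm{Stab}(\triangle)\to S_4$ under $S_4\to S_3$, fixing the punctures forces this to be trivial, so the image lies in $\ker(S_4\to S_3)=\{e,(12)(34),(13)(24),(14)(23)\}$. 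Combining the three steps gives $\mathrm{Stab}(\triangle)\cong\mathbb{Z}_2\times\mathbb{Z}_2$. The only genuinely new topological input is the puncture/perfect-matching correspondence of this last paragraph (equivalently: that no transposition or $3$-cycle of $\{\alpha,\beta,\gamma,\delta\}$ is realised by a puncture-fixing mapping class); the rest is bookkeeping with results already in place.
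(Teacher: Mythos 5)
Your proof is correct, but it follows a genuinely different route from the paper's. The paper argues directly with the ideal triangulation: $\triangle$ has four complementary triangles $T_1,\dots,T_4$, each with its three vertices at the three distinct punctures, so a puncture-label-preserving map of $T_1$ onto any $T_j$ is unique up to isotopy; hence an element of $\mathrm{Stab}(\triangle)$ is determined by the image of $T_1$, giving order $4$, and the group is Klein four because ``by symmetry'' all non-trivial elements have the same order. You instead work with the dual object (the quadruple of one-sided curves), set up the permutation representation into $S_4$, and pin the image down from below via the explicit classes $[\varphi_1],[\varphi_2],[\varphi_3]$ and from above via the puncture/perfect-matching correspondence together with purity; injectivity is an Alexander-method-style rigidity statement (each arc of $\triangle$ is recovered from a pair of curves, so fixing all four curves fixes all six arcs). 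What your version buys: it makes explicit \emph{why} the stabiliser in the pure mapping class group is only the Klein four-group and not all of $S_4$ --- a point the paper's proof absorbs silently into the phrase ``so as to preserve puncture-labeling'' --- and your parenthetical remark ($V$ normalises $F$, $\mathrm{Stab}_F(v_0)=\{e\}$ by the tree action, hence $\mathrm{Stab}(\triangle)=V$) essentially pre-proves the later lemmas on normality and on $\Gamma(N_{1,3})=F\rtimes\mathrm{Stab}(\triangle)$. The costs are that you invoke two facts at the same level of assertion as the paper itself: that the bijection of Lemma~\ref{th:bijection} is equivariant, and that the double transpositions are realised by pure mapping classes (which the paper states in Subsection~\ref{sec:mcg}); and your ``one checks'' step --- that complementary pairs single out the same puncture --- deserves a sentence, e.g.\ it follows at once from purity of $[\varphi_2]$, which carries the punctured M\"obius strip of $\{\alpha,\beta\}$ to that of $\{\gamma,\delta\}$ while fixing each puncture. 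The paper's proof is shorter; yours is more self-contained on the delicate containment.
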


\begin{proof}
There are four triangles $T_1,T_2,T_3,T_4$ induced by the triangulation $\triangle$ on $N_{1,3}$, and any element of $\mathrm{Stab}(\triangle)$ must take $T_1$ to one these four triangles. Since there is a unique way to map $T_1$ to any of these four triangles so as to preserve puncture-labeling, knowing the image of $T_1$ determines the entire mapping class. By symmetry, these mapping classes must have the same order, hence $\mathrm{Stab}(\triangle)$ is the Klein four-group.
\end{proof}

This stabiliser subgroup is given by:
\begin{align*}
\left\{[\mathrm{id}],
\begin{array}{l}
[\varphi_1]:(a,b,c,d)\mapsto (b,a,d,c),\\
{[\varphi_2]}:(a,b,c,d)\mapsto(c,d,a,b), \\
{[\varphi_3]}:(a,b,c,d)\mapsto (d,c,b,a),
\end{array}
\right\}
\end{align*}
when thought of as acting on the trace coordinates for $\mathcal{T}(N_{1,3})$.

\begin{lem}
The subgroups $F$ and $\mathrm{Stab}(\triangle)$ generate the whole mapping class group $\Gamma(N_{1,3})$.
\end{lem}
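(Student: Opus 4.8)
The plan is to show that every mapping class of $N_{1,3}$ fixing the punctures can be written as a product of flips and elements of $\mathrm{Stab}(\triangle)$. The key geometric input is Lemma~\ref{th:bijection}: vertices of $\Omega^0$ correspond bijectively to ideal triangulations of $N_{1,3}$ with distinct endpoints, and $\Gamma(N_{1,3})$ acts on this set. So it suffices to prove that the subgroup $\langle F, \mathrm{Stab}(\triangle)\rangle$ acts transitively on $\Omega^0$: if so, then given any $g\in\Gamma(N_{1,3})$, pick $h\in\langle F,\mathrm{Stab}(\triangle)\rangle$ with $h\cdot(g\cdot v_0)=v_0$ where $v_0$ is the base vertex corresponding to $\triangle$; then $hg$ stabilises $v_0$, hence $hg\in\mathrm{Stab}(\triangle)$, and therefore $g\in\langle F,\mathrm{Stab}(\triangle)\rangle$.

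For the transitivity, first recall from the discussion of the curve complex that the $1$-skeleton of $\Omega$ is a connected $4$-regular tree, and that an edge of $\Omega$ precisely encodes a flip $[f_i]$ of the adjacent vertex. Hence for any two vertices $v, v'\in\Omega^0$ there is an edge-path from $v$ to $v'$; traversing it corresponds to a sequence of flips, but each flip must be performed \emph{in the coordinate slot that is not shared along the traversed edge}. The subtlety is that the four generators $[f_1],\ldots,[f_4]$ of $F$ are defined relative to a \emph{fixed} ordering $(\alpha,\beta,\gamma,\delta)$ of the base vertex, so a flip at a distant vertex $v$ is a conjugate $w [f_i] w^{-1}$ of one of them by the word $w\in F$ joining $v_0$ to $v$ --- but such conjugates automatically lie in $F$, so each step of the edge-path stays inside $F$. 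Consequently $F$ alone already acts transitively on $\Omega^0$, which gives transitivity of the larger group a fortiori. (In fact this shows $\Gamma(N_{1,3}) = F \cdot \mathrm{Stab}(\triangle)$ as a set.)

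The step I expect to be the main obstacle is the bookkeeping in the previous paragraph: one must check carefully that walking along an edge of the tree really is realised by one of the flips $[f_i]$ (in the appropriate slot, after relabelling), i.e.\ that the $F$-action on the vertex set of the tree is the standard action of $\mathbb{Z}_2\ast\mathbb{Z}_2\ast\mathbb{Z}_2\ast\mathbb{Z}_2$ on its Bass--Serre tree. This is essentially the content of the preceding lemma ($F\cong \mathbb{Z}_2\ast\mathbb{Z}_2\ast\mathbb{Z}_2\ast\mathbb{Z}_2$ acting freely on reduced words, hence on the tree) together with the identification of $\Omega^1$-edges with flips made when the curve complex was introduced; once those are invoked, transitivity of $F$ on $\Omega^0$ is immediate, and the argument closes as above.
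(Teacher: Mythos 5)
Your proof is correct and is essentially the paper's argument: both rest on the transitivity of $F$ on $\Omega^0$ (Scharlemann's connectivity of the flip graph, i.e.\ that flips generate all $0$-cells) followed by the orbit--stabiliser observation that any mapping class fixing the base vertex lies in $\mathrm{Stab}(\triangle)$. The only difference is presentational: you act directly on the set of $4$-tuples of one-sided curves, whereas the paper phrases the same argument through the action on trace coordinates and invokes the $(4,4,4,4)$ surface to pass from a permutation of the traces $a,b,c,d$ to a permutation of the curves themselves.
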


\begin{proof}
Given an arbitrary element $[h]\in\Gamma(N_{1,3})$, the action of $[h]$ on $(a,b,c,d)=[X,f]\in\mathcal{T}(N_{1,3})$ produces another Markoff quad $(\bar{a},\bar{b},\bar{c},\bar{d})$ corresponding to the traces of $(h_*(\alpha), h_*(\beta),h_*(\gamma),h_*(\delta)$). Since the four flips $[f_i]$ generate all Markoff quads associated to the Fuchsian representation for $X$, there is an element $[g]\in F$ such that $[g]\circ[h]=[g\circ h]$ simply permutes $a,b,c,d$. By choosing $X$ to be a surface where there are only four simple one-sided geodesics with traces $\{a,b,c,d\}$ (e.g.: the $(4,4,4,4)$ surface), we see that $[g\circ h]\in\mathrm{Stab}(\triangle)$.
\end{proof}

\begin{lem}
$F$ is a normal subgroup of $\Gamma(N_{1,3})$.
\end{lem}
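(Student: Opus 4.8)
The plan is to lean on the preceding lemma, which says that $\Gamma(N_{1,3})$ is generated by $F$ together with $\mathrm{Stab}(\triangle)=\{[\mathrm{id}],[\varphi_1],[\varphi_2],[\varphi_3]\}$. To show $F\trianglelefteq\Gamma(N_{1,3})$ it suffices to check that each generator of $\Gamma(N_{1,3})$ conjugates $F$ into itself. The generators coming from $F$ do this automatically, so the entire content is to verify that each $[\varphi_j]$ normalizes $F$; since $\mathrm{Stab}(\triangle)$ is finite this amounts to the finitely many membership statements $[\varphi_j][f_i][\varphi_j]^{-1}\in F$ for $i=1,\dots,4$ and $j=1,2,3$.

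Next I would record how $\mathrm{Stab}(\triangle)$ and the flips act on an ordered Markoff quad $(a,b,c,d)$. Each $[\varphi_j]$ permutes the coordinate labels by one of the three double transpositions $(1\,2)(3\,4)$, $(1\,3)(2\,4)$, $(1\,4)(2\,3)$; write $\sigma_j$ for this permutation of $\{1,2,3,4\}$. The flip $[f_i]$ leaves the other three coordinates fixed and replaces the $i$-th coordinate by the value prescribed in \eqref{flip}; for $i=4$ this replacement is $d\mapsto abc-2a-2b-2c-d$, and the cases $i=1,2,3$ are the analogous formulas, each of which is \emph{symmetric} in the three coordinates it does not alter (this symmetry is exactly the permutation symmetry of \eqref{quad} and \eqref{eq:edge}).

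The key step is then a short direct computation with these coordinate transformations: conjugating ``flip the $i$-th coordinate, symmetrically in the remaining three'' by a coordinate permutation $\sigma$ yields ``flip the $\sigma(i)$-th coordinate, symmetrically in the remaining three'', because $\sigma$ merely permutes the three unflipped slots among themselves and the flip formula is invariant under such permutations. Hence, as transformations of the relative character variety, $[\varphi_j]\circ[f_i]\circ[\varphi_j]^{-1}=[f_{\sigma_j(i)}]$, and since a mapping class is determined by its action here (equivalently, one can check the identity directly on the representative homeomorphisms $f_i$ and $\varphi_j$ of $N_{1,3}$), the same identity holds in $\Gamma(N_{1,3})$. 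Because $\sigma_j$ is a bijection of $\{1,2,3,4\}$ and each $[f_k]\in F$, this gives $[\varphi_j]\,F\,[\varphi_j]^{-1}=F$, and combined with the trivial case it shows every generator of $\Gamma(N_{1,3})$ normalizes $F$, so $F\trianglelefteq\Gamma(N_{1,3})$.

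The only delicate point is the third step: one must use the symmetry of the flip formula in the three untouched coordinates correctly, and justify passing from an identity of coordinate transformations back to an identity in $\Gamma(N_{1,3})$. The latter is legitimate because the $\Gamma(N_{1,3})$-action on the relative character variety (equivalently on $\mathcal{T}(N_{1,3})$, via Proposition~\ref{th:teich}) is effective, but if one prefers it can be sidestepped entirely by observing the conjugation identity at the level of the explicit homeomorphisms constructed in this subsection.
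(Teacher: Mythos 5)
Your proposal is correct and follows essentially the same route as the paper: reduce normality to checking that each $[\varphi_j]$ conjugates each flip $[f_i]$ to another flip, then verify $[\varphi_j]\circ[f_i]\circ[\varphi_j]^{-1}=[f_{\sigma_j(i)}]$ using the action on trace coordinates. The paper simply records the three conjugation identities for $[f_1]$ and invokes symmetry, whereas you additionally explain \emph{why} they hold (symmetry of the flip formula in the three untouched coordinates) and why an identity of coordinate transformations lifts to one in $\Gamma(N_{1,3})$ — useful elaborations, but the argument is the same.
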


\begin{proof}
Note that it suffices to show that $\mathrm{Stab}(\triangle)$ preserves $\{[f_1],[f_2],[f_3],[f_4]\}$. We perform this check for $[f_1]$, the rest follow by symmetry:
\[
[\varphi_1]^{-1}\circ [f_1]\circ [\varphi_1]=[f_2],\,[\varphi_2]^{-1}\circ [f_1]\circ [\varphi_2]=[f_3]\text{, and }[\varphi_3]^{-1}\circ [f_1]\circ [\varphi_3]=[f_4].
\]
\end{proof}
Since $F$ and $\mathrm{Stab}$ generate $\Gamma(N_{1,3})$ and their intersection is the trivial group, we obtain the following result:
\begin{thm}
$\Gamma(N_{1,3})=F\rtimes\mathrm{Stab}(\triangle)\cong(\mathbb{Z}_2\ast\mathbb{Z}_2\ast\mathbb{Z}_2\ast\mathbb{Z}_2)\rtimes(\mathbb{Z}_2\times\mathbb{Z}_2)\cong\mathbb{Z}_2\ast(\mathbb{Z}_2\times\mathbb{Z}_2)$. In particular:
\begin{align*}
\Gamma(N_{1,3})&\cong
\left\langle
\begin{array}{r|l} 
f_1,f_2,f_3,f_4, & f_1^2=f_2^2=f_3^2=f_4^2=g^2=h^2=1,gh=hg\\
g,h & g^{-1}f_1g=f_2,h^{-1}f_1h=f_3,g^{-1}f_3g=f_4
\end{array}
\right\rangle\\
&\cong\left\langle f,g,h\mid f^2=g^2=h^2=1,gh=hg\right\rangle.
\end{align*}
\end{thm}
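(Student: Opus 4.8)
The plan is to obtain all three isomorphisms essentially formally from the lemmas just established; the only genuine input is pinning down the conjugation action.

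\emph{First isomorphism.} The four preceding lemmas supply exactly the hypotheses of the internal recognition theorem for semidirect products: $F$ is normal in $\Gamma(N_{1,3})$, $\mathrm{Stab}(\triangle)$ is a subgroup, $F\cdot\mathrm{Stab}(\triangle)=\Gamma(N_{1,3})$, and $F\cap\mathrm{Stab}(\triangle)=\{[\mathrm{id}]\}$ --- the intersection being trivial because a nontrivial reduced word in the flips $[f_i]$ carries the base vertex $\{\alpha,\beta,\gamma,\delta\}$ to a strictly farther vertex of the $4$-regular tree (the $1$-skeleton of $\Omega$), while every element of $\mathrm{Stab}(\triangle)$ fixes that vertex. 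Hence $\Gamma(N_{1,3})=F\rtimes\mathrm{Stab}(\triangle)$, and inserting the known isomorphism types $F\cong\mathbb{Z}_2\ast\mathbb{Z}_2\ast\mathbb{Z}_2\ast\mathbb{Z}_2$ and $\mathrm{Stab}(\triangle)\cong\mathbb{Z}_2\times\mathbb{Z}_2$ yields the second displayed isomorphism, once we record the action.

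\emph{The action and the presentation.} Reading the conjugations off the proof that $F$ is normal, and setting $g=[\varphi_1]$, $h=[\varphi_2]$ (so $gh=[\varphi_3]$), conjugation by $[\varphi_1],[\varphi_2],[\varphi_3]$ permutes $\{[f_1],[f_2],[f_3],[f_4]\}$ by the permutations $(12)(34),(13)(24),(14)(23)$ respectively: identifying $\{1,2,3,4\}$ with the elements of $\mathbb{Z}_2\times\mathbb{Z}_2$, this is the left regular action of the Klein four-group on its four free $\mathbb{Z}_2$-factors. Feeding the defining relations of $F$, of $\mathrm{Stab}(\triangle)$, and these conjugation relations into the standard presentation of a semidirect product produces precisely the first presentation displayed in the statement of the theorem (the conjugation relations not listed there, such as $g^{-1}f_2g=f_1$ or $h^{-1}f_2h=f_4$, are consequences of the three that are listed together with $g^2=h^2=1$ and $gh=hg$, so they may be dropped).

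\emph{Third isomorphism.} Apply Tietze transformations to that presentation. Using $g^2=h^2=1$ and $gh=hg$ we have $f_2=gf_1g$, $f_3=hf_1h$, and $f_4=gf_3g=ghf_1hg$, so $f_2,f_3,f_4$ can be eliminated from the generating set; the relations $f_i^2=1$ then follow from $f_1^2=1$ by conjugation, and the three conjugation relations become tautologies after the substitutions. What remains is $\langle f_1,g,h\mid f_1^2=g^2=h^2=1,\ gh=hg\rangle$, which, renaming $f:=f_1$, is the last presentation in the statement and is visibly the free product $\langle f\rangle\ast\langle g,h\rangle\cong\mathbb{Z}_2\ast(\mathbb{Z}_2\times\mathbb{Z}_2)$.

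\emph{Main obstacle.} Nothing here is deep; the single point needing care is to confirm in the Tietze step that every relation of the original presentation really does become a consequence of $\{f_1^2,g^2,h^2,[g,h]\}$ after the substitutions, so that no hidden relation among $f_1,g,h$ is introduced --- a short finite verification of the four square relations and three conjugation relations, exactly as indicated. Conceptually, and as an alternative bypassing presentations altogether, this is the general fact that for any group $Q$ the semidirect product $\bigl(\ast_{q\in Q}\mathbb{Z}_2\bigr)\rtimes Q$, with $Q$ permuting the factors by its left regular action, is isomorphic to $\mathbb{Z}_2\ast Q$: the factor indexed by the identity of $Q$ together with a copy of $Q$ generate the whole group with trivial intersection, and the canonical surjection $\mathbb{Z}_2\ast Q\to\bigl(\ast_{q\in Q}\mathbb{Z}_2\bigr)\rtimes Q$ carries the kernel of $\mathbb{Z}_2\ast Q\to Q$ isomorphically onto $\ast_{q\in Q}\mathbb{Z}_2$, both kernels being that free product by the Kurosh subgroup theorem. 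Specialising to $Q=\mathbb{Z}_2\times\mathbb{Z}_2$ recovers the final isomorphism with no bookkeeping at all.
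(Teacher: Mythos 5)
Your proposal is correct and follows exactly the route the paper intends: the paper offers no separate proof beyond the remark that $F$ and $\mathrm{Stab}(\triangle)$ generate $\Gamma(N_{1,3})$ with trivial intersection, relying on the preceding lemmas for normality and the isomorphism types, and then reading off the presentation and collapsing it to $\mathbb{Z}_2\ast(\mathbb{Z}_2\times\mathbb{Z}_2)$. You supply the details the paper leaves implicit (the trivial-intersection argument via the $4$-regular tree, the check that the three listed conjugation relations generate all eight, and the Tietze reduction), and your closing remark identifying the action as the left regular action of $\mathbb{Z}_2\times\mathbb{Z}_2$ is a clean conceptual summary of why the final isomorphism holds.
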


\subsection{The moduli space}
Recall that the moduli space $\mathcal{M}(N_{1,3})$ of hyperbolic structures on $N_{1,3}$ is given by $\mathcal{T}(N_{1,3})/\Gamma(N_{1,3})$. Since $F$ is a normal subgroup of $\Gamma(N_{1,3})$, the space $\mathcal{T}(N_{1,3})/F$ must be a finite cover of $\mathcal{M}(N_{1,3})$. To better see what $\mathcal{T}(N_{1,3})/F$ looks like, we first define another global coordinate chart for $\mathcal{T}(N_{1,3})$. 
\begin{lem}\label{lem:horo}
The Teichm\"uller space $\mathcal{T}(N_{1,3})$ may be real-analytically identified with the following (open) 3-simplex:
\begin{align*}
\{(H_a,H_b,H_c,H_d)\in\mathbb{R}_+^4\mid H_a+H_b+H_c+H_d=1\},
\end{align*}
we call this the \emph{horocyclic coordinate} for $\mathcal{T}(N_{1,3})$.
\end{lem}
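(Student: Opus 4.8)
The plan is to obtain the stated identification as a composition of two real-analytic diffeomorphisms. First, Proposition~\ref{th:teich} identifies $\mathcal{T}(N_{1,3})$ with the variety $V:=\{(a,b,c,d)\in\mathbb{R}_+^4\mid (a+b+c+d)^2=abcd\}$ of positive Markoff quads via the trace coordinates. Second, I claim the radial rescaling $\Psi\colon V\to\Delta$, $\Psi(a,b,c,d)=\tfrac{1}{a+b+c+d}(a,b,c,d)$, is a real-analytic diffeomorphism onto the open $3$-simplex $\Delta:=\{(H_a,H_b,H_c,H_d)\in\mathbb{R}_+^4\mid H_a+H_b+H_c+H_d=1\}$. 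Composing, the map $\mathcal{T}(N_{1,3})\to\Delta$ is $[X,f]\mapsto\tfrac1L\bigl(2\sinh\tfrac12\ell_\alpha(X),\dots,2\sinh\tfrac12\ell_\delta(X)\bigr)$, where $L$ is the sum of those four quantities. So the whole content of the lemma is that $\Psi$ is a real-analytic diffeomorphism.

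The map $\Psi$ is manifestly well defined (its four coordinates are positive and sum to $1$) and real-analytic, since $a+b+c+d>0$ on $V$. For the inverse, fix $(H_a,H_b,H_c,H_d)\in\Delta$ and ask which positive multiples $r(H_a,H_b,H_c,H_d)$, $r>0$, lie on $V$: substituting into $(a+b+c+d)^2=abcd$ and using $H_a+H_b+H_c+H_d=1$ gives $r^2=r^4H_aH_bH_cH_d$. Because the two sides of the Markoff-quad equation have different homogeneity degrees ($2$ versus $4$), this has the unique positive solution $r=(H_aH_bH_cH_d)^{-1/2}$; this is the same degree-mismatch phenomenon that drives the arithmetic--geometric-mean argument in the corollary to Proposition~\ref{th:teich}. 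Hence $\Psi$ is a bijection with $\Psi^{-1}(H_a,H_b,H_c,H_d)=(H_aH_bH_cH_d)^{-1/2}(H_a,H_b,H_c,H_d)$, which is real-analytic on $\Delta$ since $H_aH_bH_cH_d>0$ there. A one-line verification that $\Psi^{-1}\circ\Psi$ and $\Psi\circ\Psi^{-1}$ are the respective identities (each using the defining relation of $V$) then shows $\Psi$ and $\Psi^{-1}$ are mutually inverse real-analytic maps, hence diffeomorphisms, so no Jacobian computation is needed; $V$ is already a real-analytic manifold by Proposition~\ref{th:teich}.

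The name ``horocyclic coordinate'' is justified by a short remark (best placed after the proof rather than inside it): combining \eqref{quambda} with Penner's formula that, in a decorated ideal triangle with side $\lambda$-lengths $\lambda_i,\lambda_j,\lambda_k$, the horocyclic segment at the vertex opposite the side of $\lambda$-length $\lambda_i$ has length $\lambda_i/(\lambda_j\lambda_k)$, and feeding in $(\lambda_1,\lambda_2,\lambda_3,\mu_1,\mu_2,\mu_3)=(\sqrt{bc},\sqrt{ac},\sqrt{ab},\sqrt{ad},\sqrt{bd},\sqrt{cd})$ together with $abcd=(a+b+c+d)^2$, every one of the twelve horocyclic arcs cut out by $\triangle$ simplifies to one of $H_a,H_b,H_c,H_d$, each value occurring exactly once at each of the three cusps; thus $H_a+H_b+H_c+H_d=1$ is precisely the assertion that each cusp is bounded by a horocycle of length $1$, and the three defining equations of the $\lambda$-length model of $\mathcal{T}(N_{1,3})$ collapse to this single simplex relation. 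There is no genuine obstacle in the proof itself; the only points requiring a little care are that $V$ be a manifold (supplied by Proposition~\ref{th:teich}) and that each radial ray through a point of $\Delta$ meet $V$ in exactly one point, which is the homogeneity observation above.
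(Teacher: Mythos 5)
Your proof is correct and is essentially the paper's own argument: the paper's proof simply writes down the map $H_a=a/(a+b+c+d)$, etc., together with the inverse $a=\sqrt{H_a/(H_bH_cH_d)}$, which is exactly your $\Psi$ and your $\Psi^{-1}(H)= (H_aH_bH_cH_d)^{-1/2}H$ in expanded form. Your homogeneity argument for why each ray meets the Markoff-quad variety exactly once, and the remark on the horocyclic interpretation, are just more explicit versions of what the paper leaves implicit.
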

\begin{proof}
The explicit diffeomorphisms between the horocyclic coordinates and the trace coordinates is given as follows:
\begin{align*}
H_a=\sqrt{\frac{a}{bcd}}=\frac{a}{a+b+c+d},\,&H_b=\sqrt{\frac{b}{acd}}=\frac{b}{a+b+c+d}\\
H_c=\sqrt{\frac{c}{abd}}=\frac{c}{a+b+c+d},\,&H_d=\sqrt{\frac{d}{abc}}=\frac{d}{a+b+c+d},
\end{align*}
and the inverse map is given by:
\[
a=\sqrt{\frac{H_a}{H_bH_cH_d}},\,b=\sqrt{\frac{H_b}{H_aH_cH_d}},\,c=\sqrt{\frac{H_c}{H_aH_bH_d}},\,d=\sqrt{\frac{H_d}{H_aH_bH_c}}.
\]
\end{proof}
\begin{remark}
The horocyclic coordinates are so named because they correspond to the lengths of horocyclic segments on the length $1$ horocycles at the cusps of a marked surface $[X,f]$. Coupled with the labeling in figure~\ref{fig:triangulation}, figure~\ref{fig:horocycle} illustrates this correspondence.
\end{remark}
\begin{figure}[ht]
\begin{center}
\includegraphics[scale=.5]{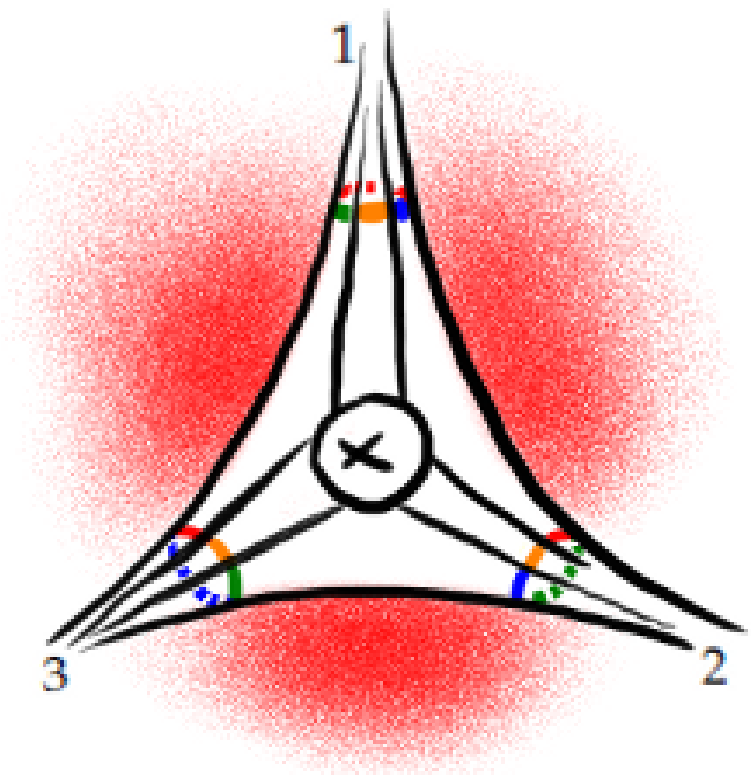}	
\end{center}
\caption{Horocyclic segments (each) of length ${\color{red}H_a},{\color{blue}H_b},{\color{green}H_c},{\color{orange}H_d}$.}
\label{fig:horocycle}
\end{figure}

\begin{thm}\label{thm:moduli}
The moduli space $\mathcal{M}(N_{1,3})$ of a thrice-punctured projective plane is homeomorphic to an open $3$-ball with an open hemisphere of order 2 orbifold points glued on, and a line of orbifold points running straight through the center of this $3$-ball ---  joining two antipodal points of this orbifold hemisphere. The orbifold points on this line are of order $2$, except for the very center point of the $3$-ball, which is order 4.
\end{thm}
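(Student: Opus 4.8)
The plan is to realise $\mathcal{M}(N_{1,3})$ explicitly as a quotient of the open $3$-simplex and then to read off its orbifold structure from a fundamental domain. By definition $\mathcal{M}(N_{1,3})=\mathcal{T}(N_{1,3})/\Gamma(N_{1,3})$, and by the preceding structure theorem $\Gamma(N_{1,3})=F\rtimes\mathrm{Stab}(\triangle)$ with $F\cong\mathbb{Z}_2\ast\mathbb{Z}_2\ast\mathbb{Z}_2\ast\mathbb{Z}_2$ and $\mathrm{Stab}(\triangle)\cong\mathbb{Z}_2\times\mathbb{Z}_2$. I would use the horocyclic coordinates of Lemma~\ref{lem:horo} to identify $\mathcal{T}(N_{1,3})$ with the open simplex $\Delta=\{(H_a,H_b,H_c,H_d)\in\mathbb{R}_+^4:H_a+H_b+H_c+H_d=1\}$, and first compute the generators in these coordinates: $\mathrm{Stab}(\triangle)$ acts as the Klein four-group of double transpositions permuting $(H_a,H_b,H_c,H_d)$, while the flip $[f_i]$ acts by $H_i\mapsto 1-H_i$ and $H_j\mapsto H_jH_i/(1-H_i)$ for $j\neq i$. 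This is a nonlinear involution whose fixed-point set is precisely the wall $W_i=\{H_i=\tfrac12\}$ and which swaps the two sides of $W_i$.

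Next I would build a fundamental domain for $F$. Because $H_a+H_b+H_c+H_d=1$, at most one coordinate can exceed $\tfrac12$, so there is a deterministic reduction: whenever $H_i>\tfrac12$, apply $[f_i]$. A short computation shows that this strictly decreases the trace-sum $a+b+c+d$; since $F$ acts properly discontinuously on $\mathcal{T}(N_{1,3})$ — equivalently, since the simple length spectrum is discrete — the reduction terminates. Hence $D=\{(H_a,H_b,H_c,H_d)\in\Delta:H_i\le\tfrac12\text{ for all }i\}$ is a fundamental domain for $F$; combinatorially $D$ is the solid octahedron obtained by truncating the four vertices of $\Delta$ at the walls $W_i$, with its four "ideal" faces $\{H_i=0\}$ (which do not belong to Teichm\"uller space) removed, and with the $W_i$ appearing as mirror boundary. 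That distinct interior points of $D$ lie in distinct $F$-orbits — confluence of the reduction — can be extracted from the fact that the $1$-skeleton of the curve complex is the $4$-regular tree on which $F$ acts, via flips, simply transitively on vertices.

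The last geometric step is to quotient $D$ by $\mathrm{Stab}(\triangle)$, which acts on the octahedron as the Klein four subgroup of its rotation group generated by the three half-turns about the axes through opposite vertices. Taking, say, $D'=D\cap\{H_a=\max(H_a,H_b,H_c,H_d)\}$ as a fundamental domain, one checks that $D'$ is a $3$-ball and records the residual identifications on its boundary (each "ordering face" $\{H_a=H_j\}$ is folded onto itself along its sub-locus where the remaining two coordinates agree). Passing to $\mathcal{M}(N_{1,3})$, the orbifold locus is then the image of the fixed-point sets of the torsion elements of $\Gamma(N_{1,3})$: the flips — which form a single conjugacy class — contribute the two-dimensional reflector locus, which descends to one open disc, the "open hemisphere" glued onto the ball; the order-two elements of $\mathrm{Stab}(\triangle)$ contribute the one-dimensional stratum, an arc of order-two orbifold points running through the centre of the ball with its two ends on the hemisphere; and the unique fixed point of a full conjugate of $\mathrm{Stab}(\triangle)\cong\mathbb{Z}_2\times\mathbb{Z}_2$ — the symmetric surface with horocyclic coordinates $(\tfrac14,\tfrac14,\tfrac14,\tfrac14)$, i.e.\ trace coordinates $(4,4,4,4)$ — is the order-four point at the centre. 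Reassembling the fundamental domain with these identifications yields the stated description.

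The main obstacle is the combinatorial bookkeeping in the last two steps: proving that $D$ (and then $D'$) is a genuine fundamental domain rather than merely a set surjecting onto the quotient, and carrying out the two successive quotients carefully enough to confirm that the gluings produce exactly an \emph{open} $3$-ball with a single \emph{open} hemisphere attached and a single central arc, with the claimed orbifold orders — in particular keeping track of which portions of $\partial\Delta$ (the ideal faces, absent from Teichm\"uller space) disappear, so that the resulting space is open rather than compact. Everything else is a routine verification.
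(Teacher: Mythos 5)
Your proposal follows essentially the same route as the paper: pass to the horocyclic coordinates of Lemma~\ref{lem:horo}, observe that each flip $[f_i]$ is an involution fixing the wall $H_i=\tfrac12$ and swapping its two sides, take the resulting open octahedron as a fundamental domain for $F$ with four mirror faces of order-2 points, and then quotient by $\mathrm{Stab}(\triangle)$ acting as the Klein four-group of half-turns. The only difference is that you supply an explicit justification (the reduction algorithm and the simply transitive action of $F$ on the vertices of the $4$-regular tree) for the octahedron being a genuine fundamental domain, a point the paper's proof leaves implicit; this is a correct and welcome addition rather than a divergence.
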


\begin{proof}
In the horocyclic coordinates described in lemma~\ref{lem:horo}, the flips generating $F$ act as follows:
\begin{align*}
[f_1]&:(H_a,H_b,H_c,H_d)\mapsto(1-H_a,H_b\frac{H_a}{1-H_a},H_c\frac{H_a}{1-H_a},H_d\frac{H_a}{1-H_a}),\\
[f_2]&:(H_a,H_b,H_c,H_d)\mapsto(H_a\frac{H_b}{1-H_b},1-H_b,H_c\frac{H_b}{1-H_b},H_d\frac{H_b}{1-H_b}),\\
[f_3]&:(H_a,H_b,H_c,H_d)\mapsto(H_a\frac{H_c}{1-H_c},H_b\frac{H_c}{1-H_c},1-H_c,H_d\frac{H_c}{1-H_c}),\\
[f_4]&:(H_a,H_b,H_c,H_d)\mapsto(H_a\frac{H_d}{1-H_d},H_b\frac{H_d}{1-H_d},H_c\frac{H_d}{1-H_d},1-H_d).
\end{align*}

From this, we see that the fixed points of $[f_1],[f_2],[f_3],[f_4]$ are respectively given by imposing the following conditions on the horocyclic coordinates:
\begin{align*}
H_a=\tfrac{1}{2},H_b=\tfrac{1}{2},H_c=\tfrac{1}{2},H_d=\tfrac{1}{2}.
\end{align*}
The region in $\mathcal{T}(N_{1,3})$ enclosed by these four planes is therefore a fundamental domain for $\mathcal{T}(N_{1,3})/F$. In this case, this fundamental domain is an octahedron. Since $[f_1]$ acts by swapping the two regions separated by $H_a=\tfrac{1}{2}$, the image of these fixed points in $\mathcal{T}(N_{1,3})/F$ are order 2 (reflection) orbifold points. Similar comments hold for each of the $[f_i]$. Thus, $\mathcal{T}(N_{1,3})/F$ is an open octahedron with four triangles of order 2 orbifold points glued onto a collection of four non-adjacent sides.\newline
\newline
Finally, by noting that $\mathrm{Stab}(\triangle)$ acts on the horocyclic coordinates by:
\begin{align*}
[\varphi_1]&:(H_a,H_b,H_c,H_d)\mapsto(H_b,H_a,H_d,H_c),\\
[\varphi_2]&:(H_a,H_b,H_c,H_d)\mapsto(H_c,H_d,H_a,H_b),\\
[\varphi_3]&:(H_a,H_b,H_c,H_d)\mapsto(H_d,H_c,H_b,H_a).
\end{align*}
We obtain the desired result.
\end{proof}

\begin{remark} 
The interior $3$-ball of $\mathcal{M}(N_{1,3})$ may be geometrically interpreted as the set of 3-cusped projective planes which have a unique unordered 4-tuple of geodesics whose flips are strictly longer.
\end{remark}

\subsection{Integral Markoff quads}

To conclude this section, we characterise the positive integral Markoff quads. Positive integral Markoff triples are of importance in number theory. They arise in approximating real numbers \cite{CasInt}, the Markoff spectrum is closely related to the Lagrange spectrum and Markoff's theorem provides an integral Markoff triples-based characterisation of indefinite binary quadratic forms \cite{CohApp,MarSur}.

\begin{thm}
Every positive integer Markoff quad may be generated by a sequence of flips and coordinate permutations from precisely one of the following eight integer Markoff quads:
\begin{align}
\begin{array}{cccc}
(1,5,24,30),&(1,6,14,21),&(1,8,9,18),&(1,9,10,10),\\
(2,3,10,15),&(2,5,5,8),&(3,3,6,6),&(4,4,4,4).
\end{array}
\end{align}
\end{thm}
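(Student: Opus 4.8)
The plan is to run a descent argument of exactly the same flavour as Markoff's classical reduction theory for triples, but adapted to the flip structure on Markoff quads worked out above. The key quantitative input is the vertex relation $(a+b+c+d)^2=abcd$ together with the edge relation $d+d'=abc-2a-2b-2c$: given a positive integer quad $(a,b,c,d)$ with, say, $d$ the largest entry, flipping the largest coordinate replaces $d$ by $d'=abc-2a-2b-2c-d = (a+b+c)^2/d$ (using $dd'=(a+b+c)^2$), which is again a positive integer and is \emph{smaller} than $d$ whenever $(a+b+c)^2 < d^2$, i.e.\ whenever $d>a+b+c$. So I would first show: if $(a,b,c,d)$ is a positive integer quad with $d\ge a+b+c$ and $d$ not equal to the other three in some degenerate way, the flip in $d$ strictly decreases the maximum; iterating, every positive integer quad reduces under flips and permutations to one that is \emph{minimal}, meaning no single flip decreases the (sorted) tuple, equivalently $2x \ge yzw - 2y-2z-2w-x$ reorganised as $x \le \tfrac12(yzw-2y-2z-2w)$ fails to decrease, i.e.\ the largest entry $d$ satisfies $d \le a+b+c$. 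The finiteness and uniqueness of the eight listed quads then becomes a bounded search.

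The main steps, in order: (1) Verify that flips and even permutations preserve positive integrality — integrality from the polynomial identity $dd'=(a+b+c)^2$ forcing $d' = (a+b+c)^2/d \in \mathbb{Z}$ once one checks $d \mid (a+b+c)^2$, which follows from $d' = abc-2a-2b-2c-d \in \mathbb{Z}$ directly; positivity from $d'd=(a+b+c)^2>0$. (2) Establish the descent: order so $a\le b\le c\le d$; show that if $d>a+b+c$ then the $d$-flip strictly lowers $d$, hence also strictly lowers the sum, so the process terminates at a \emph{minimal} quad with $d\le a+b+c$. (3) Bound the minimal quads: combining $d\le a+b+c$ with $(a+b+c+d)^2=abcd$ gives $(2d)^2 \ge (a+b+c+d)^2 = abcd \ge abc\cdot\tfrac{a+b+c}{3}\cdot 3 $-type estimates; more cleanly, $abcd=(a+b+c+d)^2\le(2(a+b+c))^2$, so $d \le \dfrac{4(a+b+c)^2}{abc}$, and together with $a\le b\le c$ this forces $a$, then $b$, then $c$ into a short finite list (e.g.\ $a^3 \le abc \le$ a bounded quantity). (4) For each admissible $(a,b,c)$ solve the quadratic $d^2+(2a+2b+2c-abc)d+(a+b+c)^2=0$ for positive integer $d$ with $d\le a+b+c$, enumerate, and discard any quad that is not minimal (i.e.\ where some \emph{other} flip still decreases it) — this culls duplicates down to the eight representatives. (5) Uniqueness of the representative: two minimal quads in the same flip-and-permutation orbit must be equal, because any reduced word of flips drives one strictly away from a minimal quad (the descent is strictly monotone off the minimal locus, exactly as in the tree argument that $F\cong\mathbb{Z}_2^{*4}$ acts on the $4$-regular tree without backtracking).

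The main obstacle I anticipate is step (3)–(4): controlling the minimal quads is not quite as clean as in the triple case because the constraint $d\le a+b+c$ interacts with $(a+b+c+d)^2=abcd$ in a way that a priori allows $a=1$ with $b,c$ moderately large (indeed four of the eight have $a=1$), so the finite search has a genuinely nontrivial range and one must be careful that the AM–GM bound $abcd=(a+b+c+d)^2\le 16\sqrt{abcd}$, giving $abcd\le 256$, is used to cap things — wait, that inequality only holds in the Teichmüller/positive-real regime via AM–GM as in the corollary above, and indeed $abcd=(a+b+c+d)^2$ combined with AM–GM $(a+b+c+d)\ge 4(abcd)^{1/4}$ gives $abcd\ge 16\sqrt{abcd}$, i.e.\ $abcd\ge 256$, so the product is bounded \emph{below}, not above; the correct finiteness bound for \emph{minimal} quads instead comes from $d\le a+b+c$ plugged into the quadratic, yielding $(a+b+c)^2 = dd' \ge d$, hence $d\le (a+b+c)^2$ and $abc \le (a+b+c+d)^2/d \le (2(a+b+c))^2/1$ — so one must combine these carefully. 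Handling the boundary cases where two or more of $a,b,c,d$ coincide (the symmetric quads $(3,3,6,6)$ and $(4,4,4,4)$, which have extra stabiliser and for which a ``decreasing'' flip may be a no-op) is the delicate bookkeeping, and getting exactly eight rather than eight-up-to-some-overlooked-symmetry is where the argument must be run with care.
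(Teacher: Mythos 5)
Your overall strategy is the same as the paper's: flips preserve positive integrality, the flip in the largest entry sends $d$ to $d'=(a+b+c)^2/d$ and hence strictly decreases it whenever $d>a+b+c$, so every orbit contains a ``minimal'' quad with $0<a\le b\le c\le d\le a+b+c$, and one then enumerates these by a finite search. Steps (1), (2) and (5) are fine. The genuine gap is in step (3): you never actually establish that the set of minimal quads is finite, and you say so yourself in the last paragraph. The bounds you write down do not close the argument: $d\le(a+b+c)^2$ and $abc\le 4(a+b+c)^2$ are vacuous until $a+b+c$ is itself bounded, the claim that $abc$ is ``a bounded quantity'' is unproven (and not obviously true a priori), and the AM--GM inequality goes the wrong way, as you note --- it gives $abcd\ge 256$, a lower bound, which is exactly why the degenerate-limit argument in the Teichm\"uller corollary works but is useless here.

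The missing inequality is obtained by cancelling the large variable rather than trying to bound it directly. From $d\le a+b+c$ one gets $\sqrt{abcd}=a+b+c+d\le 2(a+b+c)$, while $d\ge c$ gives $\sqrt{abcd}\ge c\sqrt{ab}$; combining, $c\sqrt{ab}\le 2(a+b)+2c\le 4c+c(\,\sqrt{ab}-\sqrt{ab}\,)$, i.e.\ $c(\sqrt{ab}-2)\le 2(a+b)\le 4c$, hence $\sqrt{ab}\le 6$ and $ab\le 36$. A matching lower bound $ab\ge 5$ follows because $ab\le 4$ forces $(a+b)^2+(c-d)^2\le abcd-4cd\le 0$. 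With $5\le ab\le 36$ in hand, one bounds $a\le 6$ (in fact $a\le 4$ using the systolic theorem, since $a$ is the minimal trace on the corresponding surface), and for each $a$ the inequality $(a+b+2d)^2\ge(a+b+c+d)^2=abcd\ge ab\,(d-(a+b))\,d$ is a quadratic in $d$ whose integer solutions give an explicit cap (e.g.\ $d\le 337$ for $a=1$), after which the enumeration is genuinely finite and a computer check produces exactly the eight representatives. Without some version of the $ab\le 36$ step, your search space is infinite and the proof does not terminate.
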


\begin{proof}
The edge relation(s)~\eqref{eq:edge} tell us that any flip on a positive integral Markoff quad $(a,b,c,d)$ results in another positive integral Markoff quad. Thus, by applying a sequence of flips, we may assume that $(a,b,c,d)$ lies in the fundamental domain of the moduli space described in the proof of theorem~\ref{thm:moduli}. Furthermore, up to reordering, we may assume wlog that $0<a\leq b\leq c\leq d$. The fact that $(a,b,c,d)$ lies in this fundamental domain means that $\frac{d}{a+b+c+d}\leq\frac{1}{2}$, and hence $d\leq a+b+c$. Thus,
\begin{gather*}
2(a+b+c)\geq a+b+c+d=\sqrt{abcd}\geq c\sqrt{ab},\\
\Rightarrow4c\geq2(a+b)\geq c(\sqrt{ab}-2),\\
\Rightarrow4\geq\sqrt{ab}-2\text{ and hence }36\geq ab.
\end{gather*}
Moreover, if $ab\leq4$, then substituting this into equation~\eqref{quad}:
\begin{gather*}
(a+b)^2+2(a+b)(c+d)+(c+d)^2=abcd\leq4cd\\
\Rightarrow (a+b)^2+(c-d)^2\leq0.
\end{gather*}
As this is impossible, we conclude that $5\leq ab\leq36$. If $a=1$, then $5\leq b\leq 36$ and hence $a+b\leq37$ and $c\geq d-37$. Thus:
\begin{gather*}
(37+2d)^2\geq(a+b+c+d)^2=abcd\geq5(d-37)d.
\end{gather*}
Solving for this quadratic over the integers shows that $1\leq d\leq 337$. This in turn also limits the possible values for $c\leq d$. Performing similar computations for $a=2,3,4$, we obtain the following cases:
\begin{align*}
\begin{array}{lll}
\text{if }a=1,& 5\leq b\leq 36, & 5\leq\max\{b,d-(a+b)\}\leq c\leq d\leq 337;\\
\text{if }a=2,& 3\leq b\leq 18, & 3\leq\max\{b,d-(a+b)\}\leq c\leq d\leq 101;\\
\text{if }a=3,& 3\leq b\leq 12, & 3\leq\max\{b,d-(a+b)\}\leq c\leq d\leq 40;\\
\text{if }a=4,& 4\leq b\leq 9, & 4\leq\max\{b,d-(a+b)\}\leq c\leq d\leq 26.
\end{array}
\end{align*}
It is unnecessary to consider the cases $a=5,6$ due to theorem~\ref{th:systole}, specifically: the fact that $(a,b,c,d)$ lie on the fundamental domain of the moduli space means that $a$ must be the trace of the systolic homotopy class on the surface corresponding to $(a,b,c,d)$, and we know that this trace can at most be equal to $4$.\newline
\newline
Checking through these possible values for $(a,b,c,d)$ on a computer then completes this proof.
\end{proof}

\section{Analysis on the Curve Complex}   \label{sec:cc}

Given a Markoff map $\phi$, for every edge $e=\{\alpha,\beta,\gamma\}$ fix an oriented edge 
\begin{align*}
\vec{e}=\{\alpha,\beta,\gamma;\delta'\rightarrow \delta\}\text{ to satisfy }|d'|\geq |d|,
\end{align*}
where as usual $(a,b,c,d)=(\phi(\alpha),\phi(\beta),\phi(\gamma),\phi(\delta))$. For most edges, this choice is canonical, and for edges with equality in $|d'|=|d|$, an arbitrary orientation is chosen. This produces an orientation on $\Omega^1$, where edges may be thought of as pointing from 3-cells corresponding to longer geodesics to 3-cells corresponding to shorter geodesics. Thus, analysis of the dynamics (in terms of the directions) of these edges informs us about the behaviour of geodesic length growth for $\phi$.\newline
\newline
The following lemma gives alternative algebraic characterisations of this trace comparison.

\begin{lem}
For a Markoff quad $(a,b,c,d)\in\mathbb{C}^4$, the following conditions are equivalent:
\begin{align*} \label{eq:oredge}
\mathrm{Re}\left(\tfrac{a+b+c+d'}{abc}\right)\geq\mathrm{Re}\left(\tfrac{a+b+c+d}{abc}\right)
\ \ \Leftrightarrow\ \  {\rm Re}\left(\tfrac{d'}{a+b+c+d'}\right)\geq\tfrac{1}{2}\ \ \Leftrightarrow\ \ |d'|\geq |d|.
\end{align*}
\end{lem}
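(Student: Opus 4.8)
The plan is to use the edge relation and the product relation from \eqref{eq:edge} to convert everything into statements about the single complex number $d'$. Recall from \eqref{eq:edge} that $d+d' = abc - 2(a+b+c)$ and $dd' = (a+b+c)^2$; equivalently, writing $s = a+b+c$, both $d$ and $d'$ are roots of $x^2 - (abc - 2s)x + s^2$. The first equivalence is the easiest: since $\tfrac{a+b+c+d'}{abc} - \tfrac{a+b+c+d}{abc} = \tfrac{d'-d}{abc}$, and also $1 - \tfrac{a+b+c+d'}{abc} = \tfrac{abc - s - d'}{abc} = \tfrac{d}{abc}$ (using $d + d' = abc - 2s$, so $abc - s - d' = s + d$ — wait, that gives $\tfrac{s+d}{abc}$, not $\tfrac{d}{abc}$; I should instead use the edge relation \eqref{edgerel} directly), one sees that $\mathrm{Re}\bigl(\tfrac{a+b+c+d'}{abc}\bigr) + \mathrm{Re}\bigl(\tfrac{a+b+c+d}{abc}\bigr) = 1$ by \eqref{eq:edge}/\eqref{edgerel}. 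Hence the first inequality is equivalent to $\mathrm{Re}\bigl(\tfrac{a+b+c+d'}{abc}\bigr) \geq \tfrac12$, and then using the vertex relation $\tfrac{a+b+c+d'}{abc} = \tfrac{d'}{a+b+c+d'}$ from \eqref{vertrel} gives the middle condition $\mathrm{Re}\bigl(\tfrac{d'}{a+b+c+d'}\bigr) \geq \tfrac12$.

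The substantive equivalence is the one between $\mathrm{Re}\bigl(\tfrac{d'}{a+b+c+d'}\bigr) \geq \tfrac12$ and $|d'| \geq |d|$. First I would rewrite $\tfrac{d'}{s+d'}$; since $dd' = s^2$ we have $s = \sqrt{dd'}$ up to sign, but to avoid branch issues it is cleaner to note $s + d' = \tfrac{s^2}{d} \cdot \tfrac{1}{?}$ — actually the clean move is: $s + d' = \tfrac{dd'}{d} + d' = d'\bigl(\tfrac{d}{d} \cdot \tfrac{?}{}\bigr)$, so let me instead use $d + d' + 2s = abc$ together with $s^2 = dd'$. From $s^2 = dd'$ we get $\tfrac{s}{d'} = \tfrac{d}{s}$, so $\tfrac{s+d'}{d'} = \tfrac{d}{s} + 1 = \tfrac{d+s}{s}$, whence $\tfrac{d'}{s+d'} = \tfrac{s}{s+d}$. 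Therefore the middle condition reads $\mathrm{Re}\bigl(\tfrac{s}{s+d}\bigr) \geq \tfrac12$. Now apply the elementary fact that for a complex number $w$, $\mathrm{Re}\bigl(\tfrac{1}{w}\bigr) \geq \tfrac12 \iff |w - 1| \leq 1$: with $w = \tfrac{s+d}{s} = 1 + \tfrac{d}{s}$, this becomes $\bigl|\tfrac{d}{s}\bigr| \leq 1$, i.e. $|d| \leq |s|$. But symmetrically $|s|^2 = |dd'| = |d||d'|$, so $|d| \leq |s| \iff |d| \leq |d'|$. This chain gives exactly $\mathrm{Re}\bigl(\tfrac{d'}{s+d'}\bigr) \geq \tfrac12 \iff |d'| \geq |d|$.

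The steps, in order: (1) from \eqref{edgerel}, observe $\mathrm{Re}(\tfrac{a+b+c+d'}{abc}) + \mathrm{Re}(\tfrac{a+b+c+d}{abc}) = 1$, so the first inequality is equivalent to $\mathrm{Re}(\tfrac{a+b+c+d'}{abc}) \geq \tfrac12$; (2) apply the vertex relation \eqref{vertrel} to identify $\tfrac{a+b+c+d'}{abc} = \tfrac{d'}{a+b+c+d'}$, giving the middle condition; (3) use the product relation $dd' = (a+b+c)^2$ to show $\tfrac{d'}{a+b+c+d'} = \tfrac{a+b+c}{a+b+c+d}$; (4) invoke the Möbius/disc lemma $\mathrm{Re}(1/w) \geq \tfrac12 \iff |w-1| \leq 1$ to convert to $|d| \leq |a+b+c|$; (5) use $|a+b+c|^2 = |d||d'|$ to pass to $|d| \leq |d'|$. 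The main obstacle — really the only delicate point — is step (3): one must handle the possibility that $d$, $d'$, or $a+b+c$ vanish, and more importantly make sure the manipulation $s^2 = dd' \Rightarrow \tfrac{d'}{s+d'} = \tfrac{s}{s+d}$ is valid (it requires $s + d' \neq 0$ and $s + d \neq 0$, which hold automatically when the relevant traces are nonzero, and the degenerate cases — e.g. $abc = 0$ — can be treated separately or excluded since they do not correspond to honest Markoff quads with the orientation convention in force). I would also remark that all three conditions are insensitive to the degenerate case because the lemma is only applied to edges of the curve complex arising from genuine Markoff maps.
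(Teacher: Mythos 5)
Your proof is correct, and the first equivalence is handled exactly as in the paper: the edge relation \eqref{edgerel} forces the two real parts to sum to $1$, and the vertex relation \eqref{vertrel} identifies $\tfrac{a+b+c+d'}{abc}$ with $\tfrac{d'}{a+b+c+d'}$. For the second equivalence you take a genuinely different route. The paper notes that since the two quantities sum to $1$ they have opposite imaginary parts, so comparing their real parts is the same as comparing their moduli; squaring and applying $(a+b+c+d)^2=abcd$ then converts the comparison of moduli directly into $|d'|\geq|d|$, with no division by $s=a+b+c$, $d$ or $d'$ anywhere. You instead use the product relation $dd'=s^2$ to rewrite $\tfrac{d'}{s+d'}=\tfrac{s}{s+d}$ and invoke the disc criterion $\mathrm{Re}(1/w)\geq\tfrac{1}{2}\Leftrightarrow|w-1|\leq1$, landing on $|d|\leq|s|$ and then on $|d|\leq|d'|$ via $|s|^2=|d|\,|d'|$. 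Both arguments are valid; yours makes the underlying geometry (a half-plane condition pulled back to a disc) more visible, but at the cost of the degenerate cases $s=0$ and $d'=0$, which you correctly flag but must still dispose of (when $abc\neq0$, as the statement requires, $s=0$ forces $dd'=0$ and the claim is checked by hand). The paper's modulus comparison sidesteps these entirely, so if you want a division-free write-up you could swap your steps (3)--(5) for that observation.
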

\begin{proof}
The edge relation \eqref{edgerel} proves the first equivalence.  Furthermore \eqref{edgerel} also proves that
${\rm Im}\left(\tfrac{a+b+c+d'}{abc}\right)=-{\rm Im}\left(\tfrac{a+b+c+d}{abc}\right)$ hence the first inequality is equivalent to
\begin{align*}
\left|\frac{a+b+c+d'}{abc}\right|\geq\left|\frac{a+b+c+d}{abc}\right|
\end{align*}
which is equivalent to
\begin{align*}
\left|\frac{(a+b+c+d')^2}{abc}\right|\geq\left|\frac{(a+b+c+d)^2}{abc}\right|.
\end{align*}
By the vertex relation \eqref{vertrel}, this is precisely $|d'|\geq |d|$.
\end{proof}

\subsection{Local analysis}

\begin{dfn}
Call a vertex with all outward pointed oriented edges a \emph{source}, a vertex with all inwardly pointed oriented edges a \emph{sink} and a vertex with precisely one outwardly pointed oriented edge a \emph{funnel}. The remaining two types of vertices are called \emph{saddles}.
\end{dfn}

\begin{lem}  \label{th:nosource}
There are no sources.
\end{lem}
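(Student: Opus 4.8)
The plan is to translate this purely combinatorial statement into an elementary inequality on real parts, using the characterisation of edge orientations from the preceding lemma, and then to exploit the linear identity satisfied by the four normalised traces at a vertex.

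Concretely, I would suppose $v=\{\alpha,\beta,\gamma,\delta\}\in\Omega^0$ is a source, write $(a,b,c,d)=(\phi(\alpha),\phi(\beta),\phi(\gamma),\phi(\delta))$ for the incident Markoff quad, and first assume $a+b+c+d\neq 0$. The edge of $\Omega$ through $v$ in the $\delta$-direction joins $v$ to $\{\alpha,\beta,\gamma,\delta'\}$ with $d'$ the flip of $d$; since $v$ is a source this edge points out of $v$, which by our orientation convention forces $|d|\geq|d'|$. Applying the preceding lemma to the Markoff quad $(a,b,c,d')$, whose flip in the last slot is $d$, this is equivalent to $\mathrm{Re}\!\left(\tfrac{d}{a+b+c+d}\right)\geq\tfrac12$. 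Running the identical argument in the $\alpha$-, $\beta$- and $\gamma$-directions yields
\[
\mathrm{Re}\!\left(\tfrac{a}{a+b+c+d}\right)\geq\tfrac12,\quad \mathrm{Re}\!\left(\tfrac{b}{a+b+c+d}\right)\geq\tfrac12,\quad \mathrm{Re}\!\left(\tfrac{c}{a+b+c+d}\right)\geq\tfrac12,\quad \mathrm{Re}\!\left(\tfrac{d}{a+b+c+d}\right)\geq\tfrac12 .
\]
But $\tfrac{a}{a+b+c+d}+\tfrac{b}{a+b+c+d}+\tfrac{c}{a+b+c+d}+\tfrac{d}{a+b+c+d}=1$, so summing these real parts gives $1$, while four real numbers each at least $\tfrac12$ sum to at least $2$ --- a contradiction. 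Hence no vertex with $a+b+c+d\neq 0$ can be a source.

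It then remains to rule out the degenerate locus $a+b+c+d=0$, where the normalisation $\tfrac{x}{a+b+c+d}$ driving the argument breaks down. There the quad relation forces $abcd=0$, so some coordinate, say $d$, vanishes; then $a+b+c=0$ and the flip of $d$ equals $abc-2(a+b+c)-d=abc$, so if $abc\neq0$ we get $|d'|>0=|d|$ and the $\delta$-edge necessarily points into $v$, so $v$ is not a source. The only remaining cases are those in which at least two coordinates vanish, and these I would settle by direct inspection of the flip formulas (such quads are highly non-generic, and nowhere-zero Markoff maps such as those in $\Phi_{BQ}$ never realise them). The genuine content of the lemma is the one-line real-part inequality above, immediate once the preceding lemma is in hand; the only friction I anticipate is handling this degenerate locus cleanly.
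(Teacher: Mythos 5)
Your argument is exactly the paper's: each outward-pointing edge at the putative source forces $\mathrm{Re}\bigl(\tfrac{x}{a+b+c+d}\bigr)\geq\tfrac12$ for the corresponding coordinate $x\in\{a,b,c,d\}$, and summing the four normalised traces (which add to $1$ by the vertex relation) yields $1\geq 2$. The paper does not even discuss the degenerate locus $a+b+c+d=0$, so your additional (admittedly unfinished) care there goes beyond, rather than falls short of, the published proof.
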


\begin{proof}
Given such a vertex with adjoining $3$-cells $A,B,C,D$, the vertex relation \eqref{vertrel} gives:
\begin{align*}
1=&\mathrm{Re}\left(\frac{a+b+c+d}{abc}+\frac{a+b+c+d}{abd}+\frac{a+b+c+d}{acd}+\frac{a+b+c+d}{bcd}\right)\\
\geq&\frac{1}{2}+\frac{1}{2}+\frac{1}{2}+\frac{1}{2}=2
\end{align*}
which is a contradiction.
\end{proof}

\begin{remark}\label{rmK:cell} In the Teichm\"uller component, Markoff maps take on real positive values and the proof of Lemma~\ref{th:nosource} shows that any vertex has at most one outgoing edge. This means that Fuchsian Markoff maps can have at most one sink (given some choice of orientation). We later show in theorem ~\ref{th:4nonempty} that a sink always exists, and this may be geometrically interpreted as saying that there is a unique (up to permutation) Markoff quad $(a,b,c,d)$ for any 3-cusped projective plane where the flips of $a,b,c$ of $d$ are (non-strictly) longer. However, the Markoff quad $(a,b,c,d)=\frac{i}{\sqrt{2}}(1,1,1,-2)$ is an example of a vertex with three outgoing edges.
\end{remark}

\begin{lem}
At a sink vertex, a Markoff quad contains an element of magnitude less than or equal to 4.
\end{lem}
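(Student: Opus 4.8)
The plan is to argue by contradiction. Suppose $(a,b,c,d)$ is the Markoff quad at a sink vertex and that every one of $|a|,|b|,|c|,|d|$ exceeds $4$. Set $s=a+b+c+d$; since $s^{2}=abcd\neq 0$ we have $s\neq 0$, so we may pass to the ratios $w_a=a/s$, $w_b=b/s$, $w_c=c/s$, $w_d=d/s$, which satisfy $w_a+w_b+w_c+w_d=1$. The sink hypothesis says every edge meeting the vertex points inward, i.e.\ each of the four flips is non-strictly longer. By the trace comparison lemma above --- using the evident fact that the two summands in the edge relation \eqref{edgerel} have real parts summing to $1$, and rewriting via the vertex relation \eqref{vertrel} --- this is exactly the assertion
\[
\mathrm{Re}(w_a)\leq\tfrac12,\qquad \mathrm{Re}(w_b)\leq\tfrac12,\qquad \mathrm{Re}(w_c)\leq\tfrac12,\qquad \mathrm{Re}(w_d)\leq\tfrac12 .
\]

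The engine of the argument is a reformulation of the vertex relation. From $\tfrac{a}{a+b+c+d}=\tfrac{a+b+c+d}{bcd}$, expanding the right-hand side and using $\tfrac{a^{2}}{abcd}=w_a^{2}$, one gets
\[
w_a-w_a^{2}=\tfrac{1}{bc}+\tfrac{1}{bd}+\tfrac{1}{cd},
\]
and symmetrically for $w_b,w_c,w_d$. Under our assumption each such reciprocal sum has modulus strictly less than $\tfrac{3}{16}$, so every one of $w_a,w_b,w_c,w_d$ lies in the region $R=\{\,w\in\mathbb{C}:\lvert w(1-w)\rvert<\tfrac{3}{16}\,\}$. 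Because $\tfrac{3}{16}<\tfrac14$, this region is disconnected; I would verify, by the reverse triangle inequality handled separately for $|w|\leq 1$ and for $|w|>1$, that $w\in R$ implies either $|w|<\tfrac14$ or $|1-w|<\tfrac14$, the two components being ``Cassini loops'' encircling $0$ and $1$ respectively.

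The contradiction then falls out. If some $w\in\{w_a,w_b,w_c,w_d\}$ satisfied $|1-w|<\tfrac14$, then $\mathrm{Re}(w)=1-\mathrm{Re}(1-w)\geq 1-|1-w|>\tfrac34>\tfrac12$, contradicting the sink inequalities; hence all four ratios have $|w|<\tfrac14$, so $\mathrm{Re}(w_a)+\mathrm{Re}(w_b)+\mathrm{Re}(w_c)+\mathrm{Re}(w_d)<4\cdot\tfrac14=1$, contradicting $\mathrm{Re}(w_a+w_b+w_c+w_d)=\mathrm{Re}(1)=1$. So one of $|a|,|b|,|c|,|d|$ is at most $4$.

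I expect the genuine difficulty to be locating this reformulation rather than carrying it out. A more naive route --- combine $|d'|\geq|d|$ with $dd'=(a+b+c)^{2}$ to get $|d|\leq|a+b+c|$ cyclically, then use the arithmetic--geometric mean bound on $|s|$ --- only shows that the two smallest of $|a|,|b|,|c|,|d|$ have product at most $36$, giving the weaker bound $6$. The sharp value $4$ appears because $\tfrac{3}{16}=3/4^{2}$ is precisely the threshold at which $R$ splits and its $0$-component meets the real axis at $\tfrac14<\tfrac12$; getting the shape of $R$ right, so that both contradictions close, is the only delicate step, and it is elementary.
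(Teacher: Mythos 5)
Your proof is correct, but it takes a genuinely different route from the paper's. The paper argues directly: writing $p\geq q\geq r\geq s$ for the real parts of the four quantities $\tfrac{a+b+c+d}{bcd},\dots,\tfrac{a+b+c+d}{abc}$ (your $w_a,\dots,w_d$, via the vertex relation), the sink condition gives $p\leq\tfrac12$, the vertex relation gives $p+q+r+s=1$, whence $p\geq\tfrac14$ and $q\geq\tfrac13(1-p)$, so $pq\geq\tfrac13p(1-p)\geq\tfrac1{16}$; since the product of the two ratios with largest real parts is the reciprocal of the product of the complementary pair of traces, this yields $|cd|\leq16$ and hence $\min\{|c|,|d|\}\leq4$. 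You instead run the contrapositive through the identity $w_a-w_a^2=\tfrac1{bc}+\tfrac1{bd}+\tfrac1{cd}$ (which checks out: $w_a=\tfrac{s}{bcd}$ and $w_a^2=\tfrac{a}{bcd}$, so the difference is $\tfrac{b+c+d}{bcd}$) and a Cassini-oval dichotomy. That dichotomy is true, though your proposed case split at $|w|=1$ does not quite close it; the clean division is $|w|\leq\tfrac34$ versus $|w|>\tfrac34$: in the first case $|w|\,|1-w|\geq|w|(1-|w|)\geq\tfrac3{16}$ for $|w|\in[\tfrac14,\tfrac34]$, and in the second $|w|\,|1-w|>\tfrac34\cdot\tfrac14$ using the hypothesis $|1-w|\geq\tfrac14$. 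As for what each approach buys: the paper's is shorter and gives the marginally stronger conclusion $|cd|\leq16$ for a specific pair; yours localizes each ratio $w_x$ into one of two small disks about $0$ and $1$, a structural statement closer in spirit to Bowditch-style analysis, and you are right that the naive route via $dd'=(a+b+c)^2$ and AM--GM only reaches the weaker bound $6$.
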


\begin{proof}
Let the real parts of
\begin{align*}
\frac{a+b+c+d}{abc},\frac{a+b+c+d}{abd},\frac{a+b+c+d}{acd},\frac{a+b+c+d}{bcd},
\end{align*}
respectively be $s\leq r\leq q\leq p$.\newline
\\
Then the sink part tells us that $p\leq \frac{1}{2}$ and the size ordering and the fact that $s+r+q+p=1$ tells us that $p\geq\frac{1}{4}$. And we also know that the next largest number $q\geq\frac{1}{3}(1-p)$. Therefore,
\begin{align*}
pq\geq\frac{1}{3}p(1-p)\geq\frac{1}{3}\times\frac{1}{4}\times\frac{3}{4}=\frac{1}{16}.
\end{align*}
Then, we see that:
\begin{align*}
\frac{1}{|cd|}=|\frac{a+b+c+d}{bcd}|\times|\frac{a+b+c+d}{acd}|\geq pq\geq \frac{1}{16}.
\end{align*}
Therefore, $|cd|\leq 16$ and the lesser of the magnitudes of these two traces must be less than or equal to $4$.
\end{proof}

\begin{lem}\label{th:<2}
Given a saddle vertex $\{\alpha,\beta,\gamma,\delta\}$ with two outgoing oriented edges
\begin{align*}
\{\alpha,\beta,\gamma;\delta'\rightarrow \delta\}\text{ and }\{\alpha,\beta,\delta:\gamma'\rightarrow \gamma\},
\end{align*}
then the 2-cell $\{\alpha,\beta\}$ lies in $\Omega^2_\phi(4)$ and at least one of $\{\alpha\},\{\beta\}$ lies in $\Omega^3_\phi(2)$.
\end{lem}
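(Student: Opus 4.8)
The plan is to turn the two ``outgoing edge'' hypotheses into lower bounds on two real parts, and then to combine those bounds exactly as in the proof of the lemma on sink vertices. Write $(a,b,c,d)=(\phi(\alpha),\phi(\beta),\phi(\gamma),\phi(\delta))$ and $S=a+b+c+d$, so that the vertex relation is $S^{2}=abcd$; I will assume $a,b,c,d\neq 0$, the case of a vanishing coordinate forcing $S=0$ being degenerate and handled separately. The edge through $\{\alpha,\beta,\gamma\}$ flips the coordinate $d$ and the edge through $\{\alpha,\beta,\delta\}$ flips $c$, and the hypothesis is that both of these point away from the vertex $\{\alpha,\beta,\gamma,\delta\}$. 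First I would use the trace comparison lemma, together with the edge relation $\tfrac{a+b+c+d}{abc}+\tfrac{a+b+c+d'}{abc}=1$ (whose two summands have real parts summing to $1$), to record that ``the $d$-flip edge points away'' is equivalent to $\mathrm{Re}\!\left(\tfrac{S}{abc}\right)\ge\tfrac12$, and similarly ``the $c$-flip edge points away'' is equivalent to $\mathrm{Re}\!\left(\tfrac{S}{abd}\right)\ge\tfrac12$.

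The key step is then the same algebraic manipulation used for sink vertices: from $S^{2}=abcd$ one gets $\tfrac{S}{abc}\cdot\tfrac{S}{abd}=\tfrac{S^{2}}{a^{2}b^{2}cd}=\tfrac{1}{ab}$, and since each of the two factors on the left has real part at least $\tfrac12>0$, each has modulus at least $\tfrac12$; hence $\tfrac{1}{|ab|}\ge\tfrac14$, i.e.\ $|ab|\le 4$. That is precisely $\{\alpha,\beta\}\in\Omega^{2}_{\phi}(4)$. The second assertion follows immediately, with no new input: $\min(|a|,|b|)^{2}\le|a|\,|b|=|ab|\le 4$, so $\min(|a|,|b|)\le 2$, which says that at least one of $\{\alpha\}$, $\{\beta\}$ lies in $\Omega^{3}_{\phi}(2)$.

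The mathematics here is short, so the only thing to be careful about is bookkeeping: matching each of the four edges at the vertex with the correct one of the four terms of the vertex relation, and verifying that ``points away from the vertex'' corresponds to the real part being $\ge\tfrac12$ rather than $\le\tfrac12$ --- this is a direct unwinding of the orientation convention $\vec e=\{\alpha,\beta,\gamma;\delta'\to\delta\}$ chosen so that $|d'|\ge|d|$, with each edge pointing toward the $3$-cell of smaller trace. It is worth observing that the full strength of the ``saddle'' hypothesis is never used: the argument applies verbatim to any vertex whose $c$-flip and $d$-flip edges both point away, which is why this lemma should be essentially immediate once the preceding trace-comparison machinery is in place.
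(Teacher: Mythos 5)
Your proof is correct and is essentially identical to the paper's: the paper likewise converts the two outgoing-edge conditions into $\bigl|\tfrac{c}{a+b+c+d}\bigr|\ge\tfrac12$ and $\bigl|\tfrac{d}{a+b+c+d}\bigr|\ge\tfrac12$ (the same quantities as your $\tfrac{S}{abd}$ and $\tfrac{S}{abc}$ via the vertex relation) and multiplies them to get $\tfrac{1}{|ab|}\ge\tfrac14$, hence $|ab|\le 4$ and $\min\{|a|,|b|\}\le 2$. Your added remarks (passing from real part to modulus, and the degenerate case of a vanishing coordinate) are fine and only make explicit what the paper leaves implicit.
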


\begin{proof}
The outwards pointing condition tells us that:
\begin{align*}
\left|\frac{c}{a+b+c+d}\right|\geq\frac{1}{2}\text{ and }\left|\frac{d}{a+b+c+d}\right|\geq\frac{1}{2}.
\end{align*}
Multiplying these terms together, we have:
\[
\left|\frac{cd}{(a+b+c+d)^2}\right|=\frac{1}{|ab|}\geq\frac{1}{4}\Rightarrow|ab|\leq4\Rightarrow\mathrm{min}\{|a|,|b|\}\leq2.
\]
\end{proof}

\subsection{Global analysis}

\begin{lem}  \label{th:conn}
For $k\geq2$, the cell complex comprised of all the $3$-cells in $\Omega_\phi^3(k)$ is connected.
\end{lem}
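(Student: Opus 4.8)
The plan is to argue that the $3$-cells in $\Omega^3_\phi(k)$ form a connected subcomplex by showing that any path in the $1$-skeleton of $\Omega$ connecting two such $3$-cells can be homotoped/shortened so as to stay within $\Omega^3_\phi(k)$. The key structural fact we exploit is the vertex relation \eqref{vertrel} together with the fact (from Lemma~\ref{th:nosource}) that there are no sources: at every vertex $\{\alpha,\beta,\gamma,\delta\}$, at least one of the four incident oriented edges points inward. Geometrically, the oriented edges point from longer to shorter traces, so following oriented edges ``downhill'' one cannot get stuck, and the subcomplexes $\Omega^3_\phi(k)$ are downward-closed under this flow.

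First I would set up the combinatorial framework: recall that the $1$-skeleton of $\Omega$ is a $4$-regular tree (each vertex, a $0$-cell $\{\alpha,\beta,\gamma,\delta\}$, has four incident edges, each a flip in one coordinate). A $3$-cell $\{\alpha\}$ ``contains'' a $0$-cell when $\alpha$ is one of its four entries, and the $3$-cells adjacent to $\{\alpha\}$ in the complex are exactly those sharing a common $0$-cell with it. So it suffices to show: given $\{\alpha\},\{\alpha'\}\in\Omega^3_\phi(k)$, there is a chain $\{\alpha\}=\{\alpha_0\},\{\alpha_1\},\dots,\{\alpha_n\}=\{\alpha'\}$ of $3$-cells, each in $\Omega^3_\phi(k)$, with consecutive ones sharing a $0$-cell.

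The main step is a ``descent'' argument. Take any $0$-cell $v_0$ on the boundary of $\{\alpha\}$ and any $0$-cell $v_n$ on the boundary of $\{\alpha'\}$, and take the unique geodesic path $v_0,v_1,\dots,v_n$ between them in the tree. If every $3$-cell meeting this path already lies in $\Omega^3_\phi(k)$ we are done. Otherwise, among the $3$-cells met by the path, pick one, say $\{\beta\}$, with $|\phi(\beta)|$ maximal (hence $>k$, in particular $>2$), and among the consecutive $0$-cells on the path that contain $\{\beta\}$, look at the extreme ones. At such an extreme vertex $v_i$, the edge of the path leaving the ``$\{\beta\}$-segment'' is the flip in the $\beta$-coordinate, replacing $\beta$ by $\beta'$ with $|\phi(\beta')|\le|\phi(\beta)|$ (by the orientation convention, since $|\phi(\beta)|$ is maximal along the path the edge points away from $\{\beta\}$, i.e.\ $|\phi(\beta')|\le|\phi(\beta)|$ — and here one uses the no-source lemma / the size of $|\phi(\beta)|\ge 2$ to get a \emph{strict-enough} decrease, via the vertex relation, to avoid the degenerate case $|\phi(\beta)|=|\phi(\beta')|$). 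Replacing the $\{\beta\}$-segment of the path by the analogous segment through the $0$-cells containing $\{\beta'\}$ instead yields a new path between $v_0$ and $v_n$ whose maximal trace along it has strictly dropped (or whose number of maximal-trace $3$-cells has dropped). Since traces take values in a discrete-in-modulus set once we fix $\phi$ arising from a character — actually we only need that iterating this finitely many times must terminate because the ``weight'' (max trace modulus, then count) strictly decreases and is bounded below by $k$ — we conclude after finitely many replacements that the path meets only $3$-cells of modulus $\le k$, giving the desired chain in $\Omega^3_\phi(k)$.

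The hard part will be making the descent step genuinely decrease a well-ordered quantity: one must handle the boundary case $|\phi(\beta')|=|\phi(\beta)|$ (equality in the orientation comparison) and confirm that the hypothesis $k\ge 2$ is exactly what rules out an infinite plateau — this is where the vertex relation \eqref{vertrel} and Lemma~\ref{th:nosource} combine to force that at a vertex carrying a trace of modulus $\ge 2$, flipping that coordinate strictly decreases its modulus, so no cycling can occur. The rest — that sharing a $0$-cell is the adjacency relation in the $3$-skeleton, and that modifying a tree-path segment as above is legitimate — is routine bookkeeping on the $4$-regular tree.
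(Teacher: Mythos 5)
Your descent scheme has a gap that I do not think can be repaired in the form you describe. The $1$-skeleton of $\Omega$ is a $4$-regular \emph{tree}, so between $v_0$ and $v_n$ there is exactly one path; the operation ``replace the $\{\beta\}$-segment of the path by the analogous segment through the $0$-cells containing $\{\beta'\}$'' cannot produce ``a new path between $v_0$ and $v_n$''. Even read as producing a new \emph{chain of $3$-cells} rather than a new tree-path, the replacement is not well-defined: if $v_i,\dots,v_j$ are the vertices of the path lying on $\{\beta\}$, then flipping the $\beta$-coordinate at $v_i$ and at $v_j$ produces \emph{different} curves (the flip $\beta\mapsto\beta'$ depends on the other three coordinates of the vertex at which it is performed), so there is no single $3$-cell $\{\beta'\}$ through which to reroute. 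Finally, the asserted monotonicity --- that at a vertex carrying a trace of modulus at least $2$, flipping that coordinate strictly decreases its modulus --- is false: $(4,4,4,4)\mapsto(4,4,4,36)$. Neither the vertex relation \eqref{vertrel} nor Lemma~\ref{th:nosource} gives a decrease of this kind, so your proposed weight need not drop and the induction need not terminate.

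The paper argues differently, by contradiction. If $\Omega^3_\phi(k)$ were disconnected, take a \emph{shortest} path of edges between two components; minimality forces every $3$-cell containing an edge of this path to satisfy $|\phi|>k$. If the path has length one, say the edge $\{\alpha,\beta,\gamma\}$ joining $\{\delta\}$ to $\{\delta'\}$ with $|d|,|d'|\le k$ and $|a|,|b|,|c|>k$, then the edge relation \eqref{edgerel} combined with the vertex relation \eqref{vertrel} gives $\sqrt{k^3}<\sqrt{|abc|}\le\sqrt{|d|}+\sqrt{|d'|}\le 2\sqrt{k}$, i.e.\ $k<2$, contradicting $k\ge 2$. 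If the path has length at least two, its first and last oriented edges must point away from the interior (the small $3$-cells sit at the two ends), so some interior vertex has two outgoing oriented edges, and Lemma~\ref{th:<2} then produces an adjacent $3$-cell with $|\phi|\le 2\le k$, contradicting the minimality of the path. These two local inputs --- the quantitative estimate at a single edge and the saddle Lemma~\ref{th:<2} --- are what your argument is missing; the no-sources observation alone does not suffice.
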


\begin{proof}
Assume that $\Omega_\phi^3(k)$ isn't connected and consider a shortest path of oriented edges between two distinct connected components:
\begin{align*}
\vec{e}_1,\vec{e}_2,\ldots,\vec{e}_{p-1},\vec{e}_p.
\end{align*}
Note that by assumption, any $3$-cell $X$ that contains one of these edges must satisfy $|\phi(X)|>k$.\newline
\\
If $p=1$, then $\vec{e}_1=\{\alpha,\beta,\gamma;\delta\rightarrow \delta'\}$ such that $|d|,|d'|\leq k$. Then the edge relation \eqref{edgerel} gives $abc=(a+b+c+d)+(a+b+c+d')$ hence:
\begin{align*}
\sqrt{k^3}<\sqrt{|abc|}\leq&\frac{|a+b+c+d|+|a+b+c+d'|}{\sqrt{|abc|}}\\
=&\sqrt{|d|}+\sqrt{|d'|}\leq2\sqrt{k}\\
\Rightarrow k^3&\leq 4k\Rightarrow k\leq 2,
\end{align*}
where the first equality uses the vertex relation \eqref{vertrel}.  This contradicts the assumption.\newline
\\
On the other hand, if $p\geq2$, then $\vec{e}_1$ must point away from $\vec{e}_2$ and $\vec{e}_p$ must point away from $\vec{e}_{p-1}$. But this means that at least one of the interior vertices of the path $\{\vec{e}_n\}_{n=1,\ldots,p}$ must have two arrows pointing away from it, and hence by Lemma~\ref{th:<2} one of the adjacent $3$-cells $X$ of this vertex must satisfy  $|\phi(X)|\leq 2$, thus contradicting the assumption.
\end{proof}

\begin{lem}
\label{ray}
Given an infinite ray of oriented edges $\{\vec{e}_n\}_{n\in{\mathbb{N}}}$ such that each $\vec{e}_n$ is directed towards $\vec{e}_{n+1}$, then this ray either:
\begin{enumerate}
\item
eventually spirals along the boundary of some 2-cell $\{\xi,\eta\}\in\Omega^2_\phi(4)$, or
\item
eventually enters and remains on the boundary of some 3-cell $\{\xi\}\in\Omega^3_\phi(2)$, or
\item
there are infinitely many 3-cells in $\Omega^3_\phi(2)$.
\end{enumerate}

\end{lem}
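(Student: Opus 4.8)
The plan is to track the behaviour of the ray $\{\vec e_n\}$ in terms of how consecutive oriented edges sit inside the curve complex, using the local classification of vertices. Since each $\vec e_n$ points towards $\vec e_{n+1}$, the head vertex $v_n$ of $\vec e_n$ is the tail vertex of $\vec e_{n+1}$, so at $v_n$ there is at least one outgoing edge, namely $\vec e_{n+1}$. By Lemma~\ref{th:nosource} there are no sources, so $v_n$ is either a funnel, a saddle, or a sink. If some $v_n$ is a sink, then $\vec e_{n+1}$ cannot point away from it --- contradiction. So every head vertex $v_n$ along the ray is a funnel or a saddle. If $v_n$ is a saddle, it has exactly two outgoing edges, and by Lemma~\ref{th:<2} one of the two 3-cells shared by those two outgoing edges lies in $\Omega^3_\phi(2)$; this already starts to feed into alternative (3).

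The key combinatorial dichotomy is between the \emph{direction} in which the ray turns at each funnel vertex. At a funnel $v_n$, there is a unique outgoing edge, forced to be $\vec e_{n+1}$; the three edges meeting $v_n$ other than $\vec e_n$ consist of $\vec e_{n+1}$ (outgoing) together with two incoming edges. Recording the 3-cell shared by $\vec e_n$ and $\vec e_{n+1}$ gives a well-defined sequence of 3-cells $\{\xi_n\}$ along the ray; two consecutive edges always share exactly two 3-cells, and $\xi_n$ is one of them. I would argue that if the ray does \emph{not} eventually stay on the boundary of a single 3-cell (ruling out alternative (2)) and does not eventually spiral around a single 2-cell (ruling out alternative (1)), then the sequence of 3-cells it meets cannot stabilise or become eventually 2-periodic, and hence it must encounter infinitely many distinct 3-cells; I then need to show these can be taken to lie in $\Omega^3_\phi(2)$, giving alternative (3). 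The mechanism for the last point is exactly Lemma~\ref{th:<2} at saddle vertices together with the trace-comparison inequality $|d'|\ge|d|$ along each oriented edge: moving along the ray the traces of the ``tail'' 3-cells are non-increasing, so once a 3-cell with trace $\le 2$ appears it either is revisited forever (alternative (2)) or the ray leaves it, and leaving forces passage through further small-trace cells.

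Concretely, the proof I would write proceeds as follows. First, set up notation: for each $n$ let $v_n$ be the common vertex of $\vec e_n$ and $\vec e_{n+1}$, and let $\{\xi_n,\eta_n\}$ be the 2-cell (pair of 3-cells) common to $\vec e_n$ and $\vec e_{n+1}$, chosen so that $\vec e_n=\{\xi_n,\eta_n,\zeta_n;\,\zeta_n'\to\zeta_n\}$ in the notation of the excerpt. Second, observe that if infinitely many $v_n$ are saddles, then by Lemma~\ref{th:<2} infinitely many of the associated 3-cells lie in $\Omega^3_\phi(2)$; if these are infinitely many \emph{distinct} cells we are in case (3), and otherwise some fixed cell in $\Omega^3_\phi(2)$ is met infinitely often, and a short argument (the ray can only leave and re-enter the star of a 3-cell finitely often before the trace-ordering along oriented edges is violated) shows it is eventually contained in that cell's boundary, giving case (2). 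Third, assume instead that all but finitely many $v_n$ are funnels. Then past some point the ray turns consistently, and I would show that the sequence $\{\xi_n\}$ either eventually constant --- giving (2) --- or eventually the two sequences $\{\xi_n\},\{\eta_n\}$ alternate between the same fixed pair, which is precisely spiralling around a 2-cell; the membership $\{\xi,\eta\}\in\Omega^2_\phi(4)$ would then come from applying the product estimate $|ab|\le 4$ as in Lemma~\ref{th:<2} at the funnel vertices on the 2-cell's boundary, since spiralling forces the two outgoing-ish conditions that yield that bound.

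The main obstacle I expect is the bookkeeping in the funnel case: precisely formalising what ``eventually spirals around a 2-cell'' means in terms of the edge sequence, and ruling out more exotic itineraries --- for instance a ray that keeps turning the same way around larger and larger ``loops'' without ever stabilising on one 2-cell. The resolution should again be the monotonicity of traces along oriented edges (the $|d'|\ge|d|$ condition): because the traces of the cells being ``flipped away'' are non-increasing along the ray, the itinerary cannot wander through unboundedly many cells of trace $>4$ while avoiding cells of trace $\le 2$, so either a small-trace 3-cell is hit (pushing us toward (2) or (3)) or the ray is trapped around a single small-trace 2-cell (case (1)). Making this trapping argument quantitative --- showing the relevant products $|\phi(\xi)\phi(\eta)|$ are forced to be $\le 4$ once the ray spirals --- is the technical heart, but it is a direct consequence of the inequalities already established in Lemmas~\ref{th:nosource}--\ref{th:<2} applied along the ray.
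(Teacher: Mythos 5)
Your overall skeleton (trichotomy according to whether the ray stabilises on a 2-cell, on a 3-cell, or neither) matches the paper's, but the two analytic steps you defer to are exactly the ones that do not follow from the local lemmas you cite, and this is where the gap lies. First, in the spiralling case you claim the bound $|ab|\le 4$ comes from ``applying the product estimate as in Lemma~\ref{th:<2} at the funnel vertices on the 2-cell's boundary.'' Lemma~\ref{th:<2} requires a vertex with \emph{two} outgoing oriented edges; at a funnel there is only one, so its hypothesis is never satisfied along a ray of funnels. The paper instead gets $|ab|\le4$ by a limiting argument: the third-cell traces $\{|c_i|\},\{|d_j|\}$ along the spiral are non-increasing, hence (if bounded below by $2$) convergent, and combining the orientation inequality $2\sqrt{|d_j|}\ge\sqrt{|abc_{i+1}|}$ with the edge relation \eqref{edgerel} in the form $\sqrt{|abd_j|}\le\sqrt{|c_i|}+\sqrt{|c_{i+1}|}$ forces $|ab|\le4+O(\epsilon)$. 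No single-vertex estimate gives this.

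Second, and more seriously, your dismissal of the ``exotic itinerary'' (a ray meeting infinitely many distinct 3-cells of all four colour-types while every trace stays above $2$) as ``a direct consequence of the inequalities already established in Lemmas~\ref{th:nosource}--\ref{th:<2}'' is not correct: those are local statements and cannot rule this out. This case is the technical heart of the paper's proof. One must observe that the four colour-separated trace sequences are monotone non-increasing, hence convergent if bounded below by $2$; run the same limiting computation as above to conclude that \emph{all six} pairwise products satisfy $|a_ib_j|,\dots,|c_kd_l|\to4$; deduce $|a_i|,|b_j|,|c_k|,|d_l|\to2$ and, via the edge and vertex relations, that $ab,ac,\dots,cd\approx4$ so all four traces are approximately $2$ or all approximately $-2$; and finally contradict the Markoff quad equation \eqref{quad}, which would then read $64\approx(a{+}b{+}c{+}d)^2=abcd\approx16$. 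Without this computation the conclusion that case (3) must occur when (1) and (2) fail is unsupported. (A minor additional point: your re-entry argument for a 3-cell met infinitely often is better justified by noting that the edges of $\Omega^1$ lying on the boundary of a fixed 3-cell form a connected subtree of the 4-regular tree, so a non-backtracking ray that leaves it can never return.)
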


\begin{proof}
We begin by four-colouring $\Omega^3$ with the colours $\alpha,\beta,\gamma,\delta$. In particular, we label the $3$-cells meeting $\{\vec{e}_n\}$ by $\{\alpha_i\},\{\beta_j\},\{\gamma_k\},\{\delta_l\}$ where the letter type is determined by the colour of the cell and the subscripts grow according to how early we encounter this $3$-cell as we traverse along $\{\vec{e}_n\}$.\newline
\\
At each vertex along $\{\vec{e}_n\}$, we encounter six $2$-cells of different colour-types:
\begin{align*}
\{\alpha\beta,\alpha\gamma,\alpha\delta,\beta\gamma,\beta\delta,\gamma\delta\}.
\end{align*}
Now consider all the $2$-cells that we encounter as we go along $\{\vec{e}_n\}$. Since $\{\vec{e}_n\}$ does not repeat its edges, if we ever meet only finitely many $2$-cells of a certain colour-type, then $\{\vec{e}_n\}$ eventually just stays on the last $2$-cell we meet of that cell-type. This also means that we can't meet only finitely many $2$-cells of two different colour-types because we must then stay on two distinct $2$-cells - impossible because the intersection of any two $2$-cells is either empty or consists of a single edge.\newline
\\
Assume that we encounter only finitely many $2$-cells of the (wlog) $\alpha\beta$ colour-type and that the $2$-cell that we stay on is (with a little notation abuse) $\{\alpha,\beta\}$. This means that the vertices of $\{\vec{e}_n\}$ eventually take the following form:
\begin{align*}
\ldots,\{\alpha,\beta,\gamma_i,\delta_j\},\{\alpha,\beta,\gamma_{i+1},\delta_j\},\{\alpha,\beta,\gamma_{i+1},\delta_{j+1}\},\{\alpha,\beta,\gamma_{i+2},\delta_{j+1}\},\ldots
\end{align*}
and the sequences $\{|c_i|\},\{|d_j|\}$ are (monotonically) non-increasing due to the directions of the oriented edges. Alternatively, we phrase this as the statements that:
\begin{align*}
\mathrm{Re}\left(\frac{c_i}{a+b+c_i+d_j}\right)\geq\frac{1}{2}\text{ and } \mathrm{Re}\left(\frac{d_j}{a+b+c_{i+1}+d_j}\right)\geq\frac{1}{2},
\end{align*}
noting that the latter statement implies that:
\begin{align*}
2\sqrt{|d_j|}\geq\frac{|a+b+c_{i+1}+d_j|}{\sqrt{|d_j|}}=\sqrt{|abc_{i+1}|}.
\end{align*}
 Now, if the sequence $\{|c_i|\}$ is bounded below by $2$, it must converge. Thus for any $\epsilon>0$, by choosing $i$ to be sufficiently large, $\sqrt{|c_{i+1}|}\leq\sqrt{|c_i|}\leq\sqrt{|c_{i+1}|}+\epsilon$. Then the edge relation \eqref{edgerel} for $\{\alpha,\beta,\delta_j;\gamma_i\rightarrow \gamma_{i+1}\}$ tells us that:
\begin{align*}
\sqrt{|abd_j|}&\leq\frac{|a+b+c_i+d_j|}{\sqrt{|abd_j|}}+\frac{|a+b+c_{i+1}+d_j|}{\sqrt{|abd_j|}}\\
&=\sqrt{|c_i|}+\sqrt{|c_{i+1}|}\leq2\sqrt{|c_{i+1}|}+\epsilon.
 \end{align*} 
Combining this with the inequality above, we see that:
\begin{align*}
|ab|\leq\frac{4\sqrt{|c_{i+1}|}+2\epsilon}{\sqrt{|c_{i+1}|}}\leq4+\sqrt{2}\epsilon.
\end{align*}
Therefore, $|ab|\leq4$.\newline
\\
We have now covered the case where we meet only finitely many 2-cells of one of the colour-types. The alternative is that we meet infinitely many 2-cells of all six colour-types, and we produce from this four sequences of 3-cells:
\begin{align*}
\left\{\{\alpha_i\}\},\{\{\beta_j\}\},\{\{\gamma_k\}\},\{\{\delta_l\}\right\}
\end{align*}
Now, the second case arises when one of these sequences is finite --- that is, we stick to the surface of some 3-cell. Assume wlog that this is for the colour $\alpha$, and by truncating our ray (and abusing notation), we may take $a_j=a=\phi(\{\alpha\})$ for all $j$. Moreover, unless we're in case $3$, we may further truncate our ray so that the non-increasing sequence $\{|b_j|\},\{|c_k|\},\{|d_l|\}$ remains bounded above $2$. Then the same analysis tells us that:
\begin{align}
|ab_i|,|ac_j|,|ac_k|\rightarrow 4,
\end{align}
and we can see from this that $|a|\leq2$.\newline
\\
Finally, in the case that we meet infinitely many $3$-cells of every colour-type, assume that the monotonically non-increasing sequences $\{|a_i|\},\{|b_j|\},\{|c_k|\},\{|d_l|\}$ are bounded below by $2$ and hence converge. The same analysis as in case one tells us that
\begin{align*}
|a_i b_j|,|a_i c_k|, |a_i d_l|, |b_j c_k|, |b_j d_l|, |c_k d_l|\rightarrow 4,
\end{align*}
and since these numbers are the bound below by 2, we see that:
\begin{align*}
|a_i|,|b_j|,|c_k|,|d_l|\rightarrow2.
\end{align*}

Now, for the oriented edge $\{\alpha,\beta,\gamma;\delta\rightarrow \delta'\}$ sufficiently far along $\{\vec{e}_n\}$ so that $|a|,|b|,|c|,|d|,|d'|$ are each close to $2$, the edge relation \eqref{edgerel}
\begin{align*}
\frac{a+b+c+d}{abc}+\frac{a+b+c+d'}{abc}=1
\end{align*}
tells us that:
\begin{align*}
\frac{a+b+c+d}{abc},\frac{a+b+c+d'}{abc}\approx\frac{1}{2}
\end{align*}
By symmetry, this also holds for:
\begin{align*}
\frac{a+b+c+d}{abc},\frac{a+b+c+d}{abd},\frac{a+b+c+d}{acd},\frac{a+b+c+d}{bcd}\approx\frac{1}{2}.
\end{align*}
By mutliplying pairs of these terms and invoking the vertex relation \eqref{vertrel}, we obtain that:
\begin{align*}
ab,ac,ad,bc,bd,cd\approx 4,
\end{align*}
and hence either $a,b,c,d$ are approximately all $2$ or all $-2$. But the vertex relation \eqref{vertrel} then tells us that
\begin{align*}
64\approx(a+b+c+d)^2\approx abcd\approx 16,
\end{align*}
giving us the desired contradiction for our assumption that these sequences could be bounded below by $2$.\newline
\\
In particular, this shows us that we must touch some 3-cell in $\Omega^3_\phi(2)$, and the subsequent infinitely many 3-cells of the same colour as $X$ must all be in $\Omega^3_\phi(2)$.
\end{proof}

\begin{thm}  \label{th:4nonempty}
The set of 3-cells $\Omega^3_\phi(4)$ is non-empty. Further, if $\Omega^3_\phi(2)=\varnothing$, then there is a unique sink.
\end{thm}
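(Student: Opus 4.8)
The plan is to run a descent argument on the $1$-skeleton of $\Omega$, a $4$-regular tree, using the orientation under which each edge points from the longer to the shorter of its two adjacent traces. \textbf{For the non-emptiness of $\Omega^3_\phi(4)$}, I would begin at an arbitrary vertex $v_0\in\Omega^0$. By Lemma~\ref{th:nosource} it is not a source, so it is a sink, a funnel, or a saddle. If $v_0$ is a sink, the lemma on sink vertices immediately gives an adjacent $3$-cell of trace magnitude $\le4$; if $v_0$ is a saddle it has at least two outgoing edges, and Lemma~\ref{th:<2} gives an adjacent $3$-cell in $\Omega^3_\phi(2)\subseteq\Omega^3_\phi(4)$. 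Otherwise $v_0$ is a funnel, in which case I follow its unique outgoing edge to a vertex $v_1$ and repeat. Because at a funnel the outgoing edge is never the edge just traversed, this walk never backtracks, hence (the $1$-skeleton being a tree) never revisits a vertex; so it either terminates at a sink or a saddle --- handled as above --- or it produces an infinite ray $\{\vec e_n\}$ with each $\vec e_n$ directed towards $\vec e_{n+1}$. In the latter situation Lemma~\ref{ray} applies, and each of its three conclusions yields a $3$-cell in $\Omega^3_\phi(2)$: in case~(1) because $\{\xi,\eta\}\in\Omega^2_\phi(4)$ forces $\min\{|\phi(\xi)|,|\phi(\eta)|\}\le2$, and in cases~(2) and~(3) directly. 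Either way $\Omega^3_\phi(4)\neq\varnothing$.

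\textbf{For the second assertion}, I would assume $\Omega^3_\phi(2)=\varnothing$ and first show that every vertex is a sink or a funnel: a saddle has two outgoing edges, so Lemma~\ref{th:<2} would populate $\Omega^3_\phi(2)$, and sources are excluded by Lemma~\ref{th:nosource}. If there were no sink at all, every vertex would be a funnel, and the descent walk above would be an infinite ray; Lemma~\ref{ray} would then force $\Omega^3_\phi(2)\neq\varnothing$, a contradiction. So at least one sink exists.

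\textbf{For uniqueness of the sink}, I would argue along the tree. Suppose $v\neq w$ are both sinks and let $v=u_0,u_1,\dots,u_m=w$ (with $m\ge1$) be the reduced edge-path between them in $\Omega^1$. I claim that for every $0\le i\le m-1$ the edge $\{u_i,u_{i+1}\}$ is directed towards $u_i$. This holds for $i=0$ since $u_0=v$ is a sink. If it holds for some $i\le m-2$, then $u_{i+1}$ has $\{u_{i+1},u_i\}$ as an outgoing edge, so $u_{i+1}$ is not a sink and is therefore a funnel, whence $\{u_{i+1},u_i\}$ is its unique outgoing edge and $\{u_{i+1},u_{i+2}\}$ is directed towards $u_{i+1}$ --- completing the induction. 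Taking $i=m-1$ shows $\{u_{m-1},u_m\}$ is directed towards $u_{m-1}$; but $u_m=w$ is a sink, so this edge must be directed towards $u_m$, a contradiction. Hence the sink is unique.

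The substantive input here is supplied by the earlier lemmas --- Lemma~\ref{ray} above all, together with the local magnitude estimates at sinks and saddles --- while the argument I have sketched is essentially combinatorial bookkeeping on the tree $\Omega^1$. The step I expect to need the most care is verifying that the descent walk through funnels is genuinely non-backtracking, so that it really is an infinite ray meeting the hypothesis of Lemma~\ref{ray} (namely that each $\vec e_n$ is directed towards $\vec e_{n+1}$); once that is in place, everything else is routine.
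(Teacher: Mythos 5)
Your proposal is correct and follows essentially the same route as the paper: the paper likewise reduces to the case $\Omega^3_\phi(2)=\varnothing$, invokes Lemma~\ref{ray} to force the descent along oriented edges to terminate in a sink (whose adjacent $3$-cells land in $\Omega^3_\phi(4)$ by the sink lemma), and rules out a second sink because a path between two sinks would contain an interior vertex with two outgoing edges, contradicting Lemma~\ref{th:<2}. Your version merely spells out the non-backtracking of the funnel walk and replaces the ``switch vertex'' observation with an equivalent induction along the path.
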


\begin{proof}
If $\Omega^3_\phi(2)$ is non-empty then we're done. But if it is empty, then lemma~\ref{ray} tells us that following oriented edges according to their directions must eventually result in a sink. If there are multiple sinks, they obviously cannot be distance 1 from each other. And one of the interior vertices of any path joining two sinks must have two arrows coming out of it and hence by lemma~\ref{th:<2}, the set $\Omega^3_\phi(2)$ is non-empty.
\end{proof}

\subsection{Systolic inequality.}

\begin{repeatthm} [Systolic inequality]
Let $\rho$ denote a quasi-Fuchsian representation for a thrice-punctured projective plane, then
\begin{align}
\sys(X_\rho)\leq2\arcsinh(2).
\end{align}
In particular, the unique maximum of the systole function over the moduli space of all hyperbolic thrice-punctured projective planes is $2\arcsinh(2)$.
\end{repeatthm}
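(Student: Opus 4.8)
I would prove the inequality by producing a short one-sided simple closed geodesic and turning its trace bound into a length bound. Let $\phi$ be the Markoff map associated to $\rho$ by Lemma~\ref{lem:markoff}. Theorem~\ref{th:4nonempty} gives $\Omega^3_\phi(4)\neq\varnothing$, so there is a one-sided simple closed curve $\gamma$ on $N_{1,3}$ with $|\phi(\gamma)|\leq 4$. Write $\phi(\gamma)=2\sinh\bigl(\tfrac{1}{2}\ell_\gamma(\rho)\bigr)$ with $\ell_\gamma(\rho)=x+iy$, where $x=\mathrm{Re}\,\ell_\gamma(\rho)\geq 0$ is the length of the closed geodesic in $X_\rho$ freely homotopic to $h_\rho(\gamma)$. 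Expanding and using $\cosh^2=1+\sinh^2$,
\begin{align*}
|\phi(\gamma)|^2=4\Bigl(\sinh^2\tfrac{x}{2}+\sin^2\tfrac{y}{2}\Bigr)\;\geq\;4\sinh^2\tfrac{x}{2},
\end{align*}
so $2\sinh\tfrac{x}{2}\leq|\phi(\gamma)|\leq 4$, i.e.\ $x\leq 2\arcsinh 2$. Since $\gamma$ is essential and non-peripheral, $h_\rho(\gamma)$ is homotopically non-trivial in $X_\rho$, and therefore $\sys(X_\rho)\leq x\leq 2\arcsinh 2$.

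For the moduli space statement I would restrict to Fuchsian $\rho$, which by the corollary to Proposition~\ref{th:teich} are exactly the positive real Markoff maps, so the above already gives $\sys\leq 2\arcsinh 2$ on $\mathcal{M}(N_{1,3})$. Now suppose $\sys(X_\rho)=2\arcsinh 2$ for a Fuchsian $\rho$ with (positive real) Markoff map $\phi$. Then no one-sided curve has trace $<4$ (it would be shorter than $2\arcsinh 2$), so $\Omega^3_\phi(2)=\varnothing$; combined with $\Omega^3_\phi(4)\neq\varnothing$, the minimal one-sided trace is exactly $4$. By Theorem~\ref{th:4nonempty} there is a unique sink vertex, with Markoff quad $(a,b,c,d)$. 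I claim the minimum of $\phi$ over \emph{all} one-sided curves is realised at this sink: following outgoing oriented edges from any vertex, the path is deterministic (Lemma~\ref{th:nosource} and Remark~\ref{rmK:cell}), cannot be infinite (Lemma~\ref{ray} together with $\Omega^3_\phi(2)=\varnothing$) and cannot repeat a vertex (the $1$-skeleton is a tree), hence terminates at the unique sink; and along it every flip replaces an entry of the quad by one of no larger magnitude, so $\min\{a,b,c,d\}$ bounds every one-sided trace from below. Therefore $\min\{a,b,c,d\}=4$.

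To conclude I would chase the equality case of the lemma bounding a sink's entries. After relabelling so that the two largest of the positive reals $\tfrac{a+b+c+d}{abc},\tfrac{a+b+c+d}{abd},\tfrac{a+b+c+d}{acd},\tfrac{a+b+c+d}{bcd}$ are $\tfrac{a+b+c+d}{bcd}\geq\tfrac{a+b+c+d}{acd}$, the vertex relation yields $\tfrac{1}{cd}=\tfrac{a+b+c+d}{bcd}\cdot\tfrac{a+b+c+d}{acd}\geq\tfrac{1}{16}$; since $c,d\geq 4$ this forces $c=d=4$ and collapses every intermediate inequality in that lemma to an equality, so all four of the quantities above equal $\tfrac{1}{4}$, whence $abc=abd=acd=bcd$ and $a=b=c=d=4$. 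Conversely, on the $(4,4,4,4)$ surface the minimal one-sided trace is $4$ (a flip of $(4,4,4,4)$ is $(4,4,4,36)$), giving a geodesic of length exactly $2\arcsinh 2$, while every entry of every Markoff quad arising here is $\geq 4$, so the face relation \eqref{facerel} forces every two-sided curve to have trace $\geq 14$, hence length $\geq 2\arccosh 7>2\arcsinh 2$. Thus $2\arcsinh 2$ is attained, and only at the $(4,4,4,4)$ surface.

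The short inequality is essentially immediate once Theorem~\ref{th:4nonempty} is in hand; I expect the real work to lie in the uniqueness half — namely justifying that the minimal one-sided trace is attained at the sink (which rests on the convergence dichotomy of Lemma~\ref{ray}) and then extracting the rigid identification $(a,b,c,d)=(4,4,4,4)$ by tracking the equality cases through the estimate on sink entries, together with the routine check that two-sided curves cannot undercut $2\arcsinh 2$ on the extremal surface.
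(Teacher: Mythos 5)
Your proof is correct, and for the inequality itself and for the realization of the bound it coincides with the paper's argument: both rest on Theorem~\ref{th:4nonempty} producing a one-sided curve of trace at most $4$, and on the orbit of $(4,4,4,4)$ consisting of quads with entries $\geq 4$ together with the face relation \eqref{facerel} to rule out short two-sided geodesics on the extremal surface. Two things you do differently. First, you justify the passage from $|2\sinh\tfrac{1}{2}\ell|\leq 4$ to $\mathrm{Re}\,\ell\leq 2\arcsinh 2$ via $|\sinh(u+iv)|^2=\sinh^2u+\sin^2v$; the paper elides this step, which is harmless for Fuchsian $\rho$ but genuinely needed for quasi-Fuchsian ones, so this is a welcome addition. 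Second, your uniqueness argument is a genuine variant: the paper substitutes $b=4+x_b$, $c=4+x_c$, $d=4+x_d$ into \eqref{quad} and derives $d>a+b+c$, contradicting the sink condition $\tfrac{d}{a+b+c+d}\leq\tfrac{1}{2}$, whereas you chase the equality case through the chain of inequalities in the lemma bounding a sink's entries (equality in $pq\geq\tfrac{1}{16}$ together with $cd\geq16$ forces all four quantities to equal $\tfrac{1}{4}$, hence $a=b=c=d=4$). Both routes work and need the same input, namely that at a putative maximum the sink quad has all entries $\geq4$ and some entry equal to $4$; note that your longer argument that the global minimum of $\phi$ is attained at the sink is not actually required for this, since the sink lemma already hands you an entry $\leq4$ directly. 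One justification to tighten: when you exclude an infinite outgoing path using Lemma~\ref{ray} and $\Omega^3_\phi(2)=\varnothing$, you must also rule out alternative (1) of that lemma (spiralling around a $2$-cell of $\Omega^2_\phi(4)$); this is immediate here because all one-sided traces are $\geq4$, so $|ab|\geq16$ and $\Omega^2_\phi(4)=\varnothing$, but it deserves a word.
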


\begin{proof}
Any quasi-Fuchsian representation $\rho$ induces a BQ-Markoff map $\phi$. By Theorem~\ref{th:4nonempty}, $\Omega^3_\phi(4)$ is non-empty: on the hyperbolic manifold $X_\rho$, there exists a one-sided simple geodesic $\gamma$ with $|\tr A|=|2\sinh\frac{1}{2}\ell_{\gamma}(X)|\leq 4$ and hence $\ell_{\gamma}(X)\leq 2\hspace{.4mm} \arcsinh(2)$.  Thus, the maximum of the systole length function over the set of BQ-Markoff maps is less than or equal to $2\hspace{.4mm} \arcsinh(2)$.\newline
\newline
To prove equality, consider the Markoff quad $(4,4,4,4)$, which we know from lemma~\ref{lem:markoff} arises from a Fuchsian representation. Any new Markoff quad generated from $(4,4,4,4)$ must be integral, and each entry is a positive multiple of $4$. Thus, the corresponding Markoff map has $4$ as its minimum. This in turn means that the shortest one-sided geodesic has length $2\arcsinh(2)$. On the other hand, the shortest two-sided geodesic has trace $14=4\times4-2$, is of length $2\arccosh(7)>2\arcsinh(2)$ and hence cannot be a systolic geodesic.\newline
\newline
To prove the uniqueness of the maximum of the systole function over the moduli space $\mathcal{M}(N_{1,3})$, first recall from remark~\ref{rmK:cell} that for any 3-cusped projective plane, there exists a positive real Markoff quad $(a,b,c,d)\in\mathbb{R}_+^4$ such that
\begin{align*}
\frac{a}{a+b+c+d},\frac{b}{a+b+c+d},\frac{c}{a+b+c+d},\frac{d}{a+b+c+d}\leq\frac{1}{2}.
\end{align*}
If $(a,b,c,d)\neq(4,4,4,4)$ is a maximum of the systole function, we assume wlog that $4=a\leq b\leq c\leq d$ and $4<d$. Define
\begin{align*}
0\leq x_b=b-a\leq x_c=c-a\leq x_d=d-a\text{ and }0<x_d.
\end{align*}
Expanding equation~\eqref{quad} in terms of these new quantities, we have:
\begin{align}\label{eq:ineq}
x_b^2+x_c^2+x_d^2=32(x_d+x_c+x_d)+14(x_bx_c+x_bx_d+x_cx_d)+4x_bx_cx_d.
\end{align}
Since $x_b^2\leq x_bx_c$ and $x_c^2\leq x_cx_d$, we obtain from equation~\eqref{eq:ineq} that $x_d^2\geq (32+13x_c)x_d$. Therefore:
\begin{align*}
d\geq 36+13x_c\geq 9\max\{c,4\}>a+b+c.
\end{align*}
But this contradicts the fact that $\frac{d}{a+b+c+d}\leq\frac{1}{2}$.
\end{proof}

\begin{remark}
We can recognise the minimum of the systole geometrically due to its large symmetry group.  Consider the spherical symmetric octahedron: the octahedron on the round two-sphere $S^2$ with great circle edges and full $A_4$ symmetry.  Label the 6 vertices of this octahedron to get a symmetric element $\Sigma$ in the moduli space $\mathcal{M}_{0,6}$, and note that the 6 labeled points are invariant under the antipodal map.  There exists a unique hyperbolic cusped surface $X$ with conformal structure $\Sigma$. And by the uniqueness of $X$, the vertex-fixing antipodal maps on $S^2$ uniformise to isometric $\mathbb{Z}_2$-actions on $X$. The 4 greater circles on $S^2$ which lie in the plane orthogonal to the vector between the centers of any two opposing faces uniformise to simple closed geodesics $\gamma_1,\gamma_2,\gamma_3,\gamma_4\in X$.  By symmetry, each $\gamma_i$ is invariant under antipodal $\mathbb{Z}_2$-actions and descends to a geodesic $\overline{\gamma}_i\in X/\mathbb{Z}_2$, where $X/\mathbb{Z}_2$ is the desired 3-cusped projective plane.  By symmetry, the four  geodesics $\overline{\gamma}_i$ in $X/\mathbb{Z}_2$ have the same length, and hence their traces give rise to a Markoff quad $(\ell,\ell,\ell,\ell)$ which is necessarily $(4,4,4,4)$.  
\end{remark}

\section{Fibonacci Growth}  \label{sec:fib}
For any Markoff quad, solve for $d$ to get
\begin{align*}
d=\frac{abc}{4}\left(1\pm\sqrt{1-4\left(\frac{1}{ab}+\frac{1}{ac}+\frac{1}{bc}\right)}\,\right)^2.
\end{align*}
For $|a|$, $|b|$, $|c|$ large, choose $d$ to be the larger of the two solutions, hence
\begin{align*}
\log|d|\approx \log|a|+\log|b|+\log|c|.
\end{align*} 
In particular, $|d|$ is greater than $|a|$, $|b|$ and $|c|$. If we continue and flip from $a$ to $a'$,  then $\log|a|<\log|a'|\approx \log|b|+\log|c|+\log|d|$.  This gives rise to the notion of \emph{Fibonacci growth} for Markoff quads, in keeping with Bowditch's Fibonacci growth for Markoff triples \cite{BowMar}.\newline
\newline
The goal of this section is to define and establish the Fibonacci growth for BQ-Markoff maps defined in \eqref{BQ}, and use these growth rates to prove McShane identities and length spectrum growth rates. It should be noted that the Fibonacci growth rates that we define and prove here are strictly stronger than similar growth rates found in \cite{HuHIde}, although only the weaker version is needed to prove theorem~\ref{th:main}.

\subsection{Fibonacci growth.} Given an edge $e\in\Omega^1$, define the \emph{Fibonacci function} $F_e:\Omega^3\rightarrow\mathbb{R}$ by:  
\begin{itemize}
\item[(i)] $F_e(\alpha)=1$ if $e=\{\alpha,\beta,\gamma\}$.
\item[(ii)] For $\{\alpha,\beta,\gamma,\delta\to\delta'\}\in\vec{\Omega}^1$ oriented so that it points away from $e$ (or is either of the two possible oriented edges for $e$ itself)
\begin{align*}
F_e(\{\delta\})=F_e(\{\alpha\})+F_e(\{\beta\})+F_e(\{\gamma\}).
\end{align*}
\end{itemize}
Hence $F_e:\Omega^3\rightarrow\mathbb{R}$ takes the value of $1$ for the three $3$-cells in $\Omega^3$ that contain $e$ and subsequently define values for the rest of the tree by assigning to every hitherto unassigned $3$-cell meeting three assigned $3$-cells at some vertex the sum of the values of those already assigned $3$-cells.

\begin{dfn}  \label{fibound}
Given a function $f:\Omega^3\rightarrow[0,\infty)$ and $\Omega'\subset\Omega^3$, we say that $f$ has:
\begin{itemize}
\item
a \emph{lower Fibonacci bound} on $\Omega'$ if there's some positive $\kappa$ such that:
\begin{align*}
\frac{1}{\kappa} F_e(X)\leq f(X)\text{ for all but finitely many }X\in\Omega';
\end{align*}
\item
an \emph{upper Fibonacci bound} on $\Omega'$ if there's some positive $\kappa$ such that:
\begin{align*}	
f(X)\leq \kappa F_e(X)\text{ for all }X\in\Omega';
\end{align*}
\item
\emph{Fibonacci growth} on $\Omega'$ if there's some positive $\kappa$ such that:
\begin{align*}
\frac{1}{\kappa} F_e(X)\leq f(X)\leq \kappa F_e(X)\text{ for all but finitely many }X\in\Omega';
\end{align*}
or in other words: it has both lower and upper Fibonacci bound. We also opt to omit \emph{``on $\Omega'$''} whenever $\Omega'=\Omega^3$.
\end{itemize}
\end{dfn}
We assumed the choice of an edge $e$ for these definitions, and now show that the existence of a $\kappa$ satisfying these conditions is independent of this choice.

\begin{lem}\label{upper}
Given some edge $e$ that is the intersection of the three $3$-cells $X_1,X_2,X_3$ and a function $f:\Omega^3\rightarrow[0,\infty)$ satisfying:
\begin{align*}
f(D)\leq f(A)+f(B)+f(C)+2c,\,0\leq c,
\end{align*}
where $A,B,C,D$ meet at the same vertex and $D$ is strictly farther from $e$ than $A,B,C$. Then:
\begin{align*}
f(X)\leq(M+c)F_e(X)-c,\text{ for all }X\in\Omega^3,
\end{align*}
where $M=\mathrm{max}\{f(X_1),f(X_2),f(X_3)\}$.
\end{lem}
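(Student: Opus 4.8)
The plan is to absorb the additive constant and then induct along the tree $\Omega^1$ exactly as in the definition of $F_e$. Set $g:=f+c$. Then the hypothesis $f(D)\leq f(A)+f(B)+f(C)+2c$ becomes
\[
g(D)=f(D)+c\leq \bigl(f(A)+f(B)+f(C)+2c\bigr)+c=g(A)+g(B)+g(C),
\]
so $g$ obeys at every vertex the same superadditive law that $F_e$ obeys as an equality. It therefore suffices to prove $g(X)\leq(M+c)\,F_e(X)$ for all $X\in\Omega^3$, since subtracting $c$ then yields $f(X)\leq(M+c)F_e(X)-c$.

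I would prove this inequality by strong induction on the positive integer $F_e(X)$. Note that $F_e(X)=1$ exactly for the three $3$-cells $X_1,X_2,X_3$ containing $e$: any other $3$-cell is assigned, by definition of $F_e$, as a sum $F_e(A)+F_e(B)+F_e(C)\geq 3$ of three earlier values. For the base case $F_e(X)=1$ we have $g(X_i)=f(X_i)+c\leq M+c=(M+c)F_e(X_i)$ by definition of $M$. For the inductive step, a $3$-cell $D$ with $F_e(D)\geq 3$ arises at some vertex where it meets three $3$-cells $A,B,C$ lying strictly closer to $e$, with $F_e(D)=F_e(A)+F_e(B)+F_e(C)$; in particular each of $F_e(A),F_e(B),F_e(C)$ is at most $F_e(D)-2<F_e(D)$, so the induction hypothesis applies to each of them, and
\[
g(D)\leq g(A)+g(B)+g(C)\leq(M+c)\bigl(F_e(A)+F_e(B)+F_e(C)\bigr)=(M+c)F_e(D).
\]
Unwinding $g=f+c$ then gives $f(X)\leq(M+c)F_e(X)-c$ for all $X\in\Omega^3$, as claimed.

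The only delicate point — and the nearest thing to an obstacle — is the bookkeeping that makes this induction legitimate: one must know that every $3$-cell other than $X_1,X_2,X_3$ really does occur as the ``strictly farther'' cell $D$ at a vertex whose other three $3$-cells $A,B,C$ lie strictly closer to $e$, so that the defining recursion for $F_e$ (and hence the inductive step) applies to it. This is exactly the combinatorial content of the well-definedness of $F_e$: since the $1$-skeleton of $\Omega$ is a $4$-regular tree, every vertex not incident to $e$ has a unique incident edge separating it from $e$, the three $3$-cells through that edge are the three ``closer'' cells, and the remaining $3$-cell at the vertex is the ``farther'' one. Granting this, no estimate beyond the elementary chain of inequalities above is needed.
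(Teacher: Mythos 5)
Your proof is correct and is essentially the paper's argument: the paper performs the same induction along the tree (indexed by distance from $e$ rather than by the value of $F_e(X)$, an immaterial difference), with your substitution $g=f+c$ simply repackaging the paper's bookkeeping step $-3c+2c=-c$ in the inductive estimate.
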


\begin{proof}
We prove this by induction on the distance of a region from $e$. The base case is due to:
\begin{align*}
f(X_i)\leq(\mathrm{max}\{f(X_1),f(X_2),f(X_3)\}+c)-c.
\end{align*}
The induction step is similarly established:
\[
f(D)\leq(M+c)(F_e(A)+F_e(B)+F_e(C))-3c+2c=(M+c)F_e(D)-c.
\]
\end{proof}

Note that by essentially the same proof, we obtain the following result:

\begin{lem}
Given some edge $e$ that is the intersection of the three $3$-cells $X_1,X_2,X_3$ and a function $f:\Omega^3\rightarrow[0,\infty)$ satisfying:
\begin{align*}
f(D)\geq f(A)+f(B)+f(C)-2c,\,0\leq c<m:=\mathrm{min}\{f(X_1),f(X_2),f(X_3)\},
\end{align*}
where $A,B,C,D$ meet at the same vertex and $D$ is strictly farther from $e$ than $A,B,C$. Then:
\begin{align*}
f(X)\geq(m-c)F_e(X)+c,\text{ for all }X\in\Omega^3.
\end{align*}
\end{lem}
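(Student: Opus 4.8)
The plan is to mirror, nearly verbatim, the inductive argument proving Lemma~\ref{upper}, running the induction on the combinatorial distance of a $3$-cell $X$ from the edge $e$ in the $1$-skeleton of $\Omega$. Recall that this $1$-skeleton is a $4$-regular tree and that $F_e$ is constructed by outward propagation from $e$: it equals $1$ on the three $3$-cells $X_1,X_2,X_3$ incident to $e$, and at every vertex strictly farther from $e$ the unique new cell $D$ receives $F_e(D)=F_e(A)+F_e(B)+F_e(C)$, where $A,B,C$ are the three cells at that vertex that are closer to $e$. The same tree structure lets us induct on distance: every $3$-cell is reached from $\{X_1,X_2,X_3\}$ by finitely many outward steps of this form.

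For the base case, $F_e(X_i)=1$, so $(m-c)F_e(X_i)+c=(m-c)+c=m=\min\{f(X_1),f(X_2),f(X_3)\}\leq f(X_i)$ for $i=1,2,3$; here the hypothesis $c<m$ is used only to ensure $m-c>0$, which makes the asserted bound non-vacuous. For the inductive step, take a vertex with incident $3$-cells $A,B,C,D$ where $D$ is strictly farther from $e$ than $A,B,C$. Applying the induction hypothesis to $A,B,C$ together with the super-additivity hypothesis on $f$ and the recursion defining $F_e$ gives
\begin{align*}
f(D)&\geq f(A)+f(B)+f(C)-2c\\
&\geq (m-c)\big(F_e(A)+F_e(B)+F_e(C)\big)+3c-2c\\
&=(m-c)F_e(D)+c,
\end{align*}
which closes the induction and establishes $f(X)\geq(m-c)F_e(X)+c$ for all $X\in\Omega^3$.

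There is no genuine obstacle here; the one point that needs care is the bookkeeping of the additive constants, namely that the $+3c$ produced by summing the three inductive estimates $(m-c)F_e(\cdot)+c$ and the $-2c$ from the hypothesis combine to leave exactly the $+c$ demanded in the conclusion. This is precisely the cancellation $-3c+2c=-c$ appearing in Lemma~\ref{upper}, now with the signs reversed. One should also note, as in that lemma, that at each vertex away from $e$ exactly one incident $3$-cell is farther from $e$ than the other three, so that the inductive step is well posed; this is immediate from the tree structure of the $1$-skeleton of $\Omega$ and is exactly the combinatorics already encoded in the definition of the Fibonacci function $F_e$.
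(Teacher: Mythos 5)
Your proof is correct and is exactly what the paper intends: the paper states this lemma with only the remark that it follows ``by essentially the same proof'' as Lemma~\ref{upper}, and your write-up is precisely that sign-reversed induction, with the constant bookkeeping $+3c-2c=+c$ handled correctly and the base case correctly reduced to $c<m$ via $F_e(X_i)=1$.
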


However, this is insufficient for our purposes. We shall require:

\begin{lem}\label{lower}
Given some oriented edge $\vec{e}$ that is the intersection of the three $3$-cells $X_1,X_2,X_3$ and a function $f:\Omega^3\rightarrow[0,\infty)$ satisfying:
\begin{align*}
f(D)\geq f(A)+f(B)+f(C)-2c,\,0\leq c<\mu:=\mathrm{min}\{f(X_i)+f(X_j)\}_{i\neq j},
\end{align*}
where $A,B,C,D$ meet at the same vertex and $D$ is strictly farther from $e$ than $A,B,C$. Then:
\begin{align*}
f(X)\geq(\mu-2c)F_e(X)+c,\text{ for all }X\in\Omega^3_-(\vec{e})-\Omega^3_0(\vec{e}).
\end{align*}
\end{lem}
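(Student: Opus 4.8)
The plan is to follow the template of Lemma~\ref{upper} and its lower-bound companion, but to refine the bookkeeping near the oriented edge $\vec e$. As in those proofs, the first step is to absorb the additive constant: set $g:=f-c$, so that the hypothesis $f(D)\ge f(A)+f(B)+f(C)-2c$ becomes $g(D)\ge g(A)+g(B)+g(C)+c\ge g(A)+g(B)+g(C)$, i.e.\ $g$ is superadditive across each vertex with a slack of $c$ to spare. Since $\mu-2c=\min_{i\ne j}\bigl(g(X_i)+g(X_j)\bigr)$ and the target inequality $f(X)\ge(\mu-2c)F_e(X)+c$ is equivalent to $g(X)\ge(\mu-2c)F_e(X)$, it suffices to establish this last inequality on $\Omega^3_-(\vec e)-\Omega^3_0(\vec e)$.

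The point of passing to the half-tree $\Omega^3_-(\vec e)$ in front of $\vec e$, and of discarding the exceptional cells $\Omega^3_0(\vec e)$ near its head, is that the individual seed values $g(X_1),g(X_2),g(X_3)$ are \emph{not} bounded below by $\mu-2c$; only their pairwise sums are. So the induction cannot be launched at the cells of $e$, where $F_e\equiv 1$, as it was for the preceding lemma. Instead I would bootstrap it one or two steps into $\Omega^3_-(\vec e)$: apply the hypothesis at the head vertex of $\vec e$ to bound $g$ on the cell introduced there by $g(X_1)+g(X_2)+g(X_3)$, and then apply the hypothesis again at the next vertex along, whose three already-present cells include two of the $X_i$. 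At that vertex the two base contributions together are at least $\min_{i\ne j}\bigl(g(X_i)+g(X_j)\bigr)=\mu-2c$, which is precisely what upgrades the effective seed from $\min_i g(X_i)$ (the weaker lemma) to $\mu-2c$. Once the improved bound $g\ge(\mu-2c)F_e$ has been checked directly on this first layer of $\Omega^3_-(\vec e)-\Omega^3_0(\vec e)$, the rest is the same outward induction as in Lemma~\ref{upper}: if $A,B,C$ lie closer to $e$ than $D$ and each already satisfies $g(\cdot)\ge(\mu-2c)F_e(\cdot)$, then $g(D)\ge g(A)+g(B)+g(C)\ge(\mu-2c)\bigl(F_e(A)+F_e(B)+F_e(C)\bigr)=(\mu-2c)F_e(D)$.

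I expect the delicate part to be the base step, not the induction. One has to pin down $\Omega^3_0(\vec e)$ exactly, and then check two things: that the bootstrap estimates genuinely dominate $(\mu-2c)F_e(X)$ at those first cells of $\Omega^3_-(\vec e)-\Omega^3_0(\vec e)$ --- here the slack $c$ left over from the substitution $g=f-c$, together with the hypothesis $c<\mu$, is what makes the arithmetic work out --- and that every cell of $\Omega^3_-(\vec e)-\Omega^3_0(\vec e)$ is introduced at a vertex whose other three cells again lie in $\Omega^3_-(\vec e)-\Omega^3_0(\vec e)$, so the outward induction stays inside the allowed set. Carrying the $F_e$-values through the head of $\vec e$, where $F_e$ jumps from $1$ on the cells of $e$ to $3$ on the next cell before growing Fibonacci-wise, is exactly where one has to be careful; and the hypothesis $c<\mu$, weaker than the $c<m$ needed for the previous lemma, ought to be precisely what that computation requires.
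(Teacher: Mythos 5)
Your normalization $g:=f-c$ and the reduction to superadditivity of $g$ is a clean reformulation, and you correctly diagnose that the induction cannot be seeded at $X_1,X_2,X_3$ because only the \emph{pairwise} sums $g(X_i)+g(X_j)$ are controlled. But the repair you propose --- verify the individual bound $g(X)\geq(\mu-2c)F_e(X)$ directly on a first layer and then run a cell-by-cell outward induction --- does not close, and the check you flag at the end (``that every cell of $\Omega^3_-(\vec e)-\Omega^3_0(\vec e)$ is introduced at a vertex whose other three cells again lie in'' that set) is exactly the one that fails. The seed cells are not confined to a neighbourhood of $e$: every vertex on the boundary of the $3$-cell $X_1$ contains $X_1$, and there are infinitely many such vertices in the tail half-tree (for instance, spiral around the $2$-cell $X_1\cap X_2$). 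Since $g(X_1)$ may equal $0$ while $F_e(X_1)=1$, the individual bound is simply false at $X_1$, so the inductive step $g(D)\geq g(A)+g(B)+g(C)\geq(\mu-2c)\bigl(F_e(A)+F_e(B)+F_e(C)\bigr)$ is unavailable at each of these infinitely many vertices; no finite bootstrap removes them. Quantitatively: with $g(X_1)=0$ and $g(X_2)=g(X_3)=\mu-2c$, the first tail-side cell $D_-$ only satisfies $g(D_-)\geq 2(\mu-2c)$ against $F_e(D_-)=3$, and along the spiral around $X_1\cap X_2$ the guaranteed ratio $g/F_e$ tends to $(\mu-2c)/2$, not $\mu-2c$.

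The device in the paper's proof, which your proposal is missing, is to run the induction on \emph{pairwise sums of adjacent cells} rather than on individual cells: one first shows, by induction on total distance from $e$, that adjacent $X,Y$ in $\Omega^3_-(\vec e)$ satisfy a bound of the form $f(X)+f(Y)\geq\lambda\bigl(F_e(X)+F_e(Y)\bigr)+2c$, where the base case is precisely the hypothesis defining $\mu$ and the inductive step regroups $f(X)+f(Y)\geq\bigl(f(X)+f(W)\bigr)+\bigl(f(X)+f(Z)\bigr)-2c$ into two pairs already handled. The individual bound is then extracted at the very end by the averaging trick $2f(D)\geq f(A)+f(B)+f(C)+f(D)-2c=\bigl(f(A)+f(B)\bigr)+\bigl(f(C)+f(D)\bigr)-2c$. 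Working with pairs is what makes $\mu$ usable as a base case and neutralises a possibly vanishing seed value, since $g(X_1)$ only ever appears summed with a neighbour. (Note that this route naturally produces the multiplicative constant $(\mu-2c)/2$; the constant as stated in the lemma cannot be reached by your direct induction either, as the computation at $D_-$ shows, but any positive constant suffices for the lower-Fibonacci-bound applications.)
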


\begin{proof}
We first use induction to show that any two adjacent $3$-cells in $\Omega^3_-(\vec{e})$ satisfy:
\begin{align*}
f(X)+f(Y)\geq (\mu-2c)(F_e(X)+F_e(Y))+2c.
\end{align*}
The base case where $X$ and $Y$ are both in $\Omega_0(e)$ follows from the definition of $\mu$. We proceed by induction on the total distance of $X$ and $Y$ from $e$. Assume that $Y$ is farther than $X$ from $e$. The tree structure of $\Omega^3_-(\vec{e})$ means that there is a unique closest vertex between the edge $e$ and the face $X\cap Y$. Denote the two other $3$-cells at this vertex by $W$ and $Z$, we then have:
\begin{align*}
f(X)+f(Y)\geq& f(X)+f(W)+f(X)+f(Z)-2c\\
\geq&(\mu-2c)(F_e(X)+F_e(W)+F_e(X)+F_e(Z))+4c-2c\\
=&(\mu-2c)(F_e(X)+F_e(Y))+2c,
\end{align*}
completing the induction.\newline
\newline
Now consider a $3$-cell $D\in\Omega^3_-(\vec{e})-\Omega^3_0(e)$, and denote by $A,B,C$ the three other $3$-cells meeting $D$ at the closest vertex between $e$ and $D$. Then we have:
\begin{align*}
f(D)\geq&\frac{1}{2}(f(A)+f(B)+f(C)+f(D)-2c)\\
\geq&\frac{1}{2}(\mu-2c)(F_e(A)+F_e(B)+F_e(C)+F_e(D))+2c-c\\
=&(\mu-2c)F_e(D)+c.
\end{align*}
\end{proof}

Since for any edge $e'$, the function $F_{e'}$ satisfies the criteria for these last two lemmas, we see that there is some $\kappa>0$ such that:
\begin{align*}
\frac{1}{\kappa}F_e(X)\leq F_{e'}(X)\leq\kappa F_e(X),\text{ for all }X\in\Omega^3.
\end{align*}
Which shows that Definition~\ref{fibound} is indeed independent of the choice of the edge $e$.

\begin{lem}   \label{th:sumcon}
If a function $f:\Omega^3\rightarrow\mathbb{R}^+$ has a lower Fibonacci bound, then for any $\sigma>3$, the following sum converges:
\begin{align*}
\sum_{X\in\Omega}f(X)^{-\sigma}<\infty.
\end{align*}
\end{lem}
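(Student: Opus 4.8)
The plan is to first strip away the analytic content. By the lower Fibonacci bound (Definition~\ref{fibound}) there is a $\kappa>0$ and a finite set $E\subset\Omega^3$ with $f(X)\geq\tfrac{1}{\kappa}F_e(X)$ for all $X\notin E$, so that
$\sum_{X}f(X)^{-\sigma}\leq\sum_{X\in E}f(X)^{-\sigma}+\kappa^{\sigma}\sum_{X\in\Omega^3}F_e(X)^{-\sigma}$;
the first sum is finite, and by the comparability $\tfrac{1}{\kappa'}F_e\leq F_{e'}\leq\kappa'F_e$ already established the choice of root edge $e$ is immaterial. Everything therefore reduces to the purely combinatorial statement $\sum_{X\in\Omega^3}F_e(X)^{-\sigma}<\infty$ for $\sigma>3$, for one conveniently chosen $e$.

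For the latter I would prove the counting bound $\#\{X\in\Omega^3:F_e(X)\leq N\}=O(N^3)$; granting it, Abel summation gives $\sum_{X}F_e(X)^{-\sigma}=\sum_{N\geq1}a_N\bigl(N^{-\sigma}-(N+1)^{-\sigma}\bigr)=O\bigl(\sum_{N}N^{2-\sigma}\bigr)$, which converges precisely when $\sigma>3$ (here $a_N$ is the count). To do the counting, fix a root edge $e$ contained in the three $3$-cells $X_1,X_2,X_3$. Removing $e$ disconnects the $4$-regular tree $\Omega^1$ into two subtrees $T_+,T_-$, and the supporting subtree of any $3$-cell other than $X_1,X_2,X_3$ --- i.e.\ the set of $0$-cells containing it --- lies entirely in exactly one of them; by symmetry it suffices to count $3$-cells whose supporting subtree lies in $T_+$. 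The map sending such a cell $X$ to the $0$-cell $v(X)$ of $T_+$ closest to $e$ among those containing $X$ is a bijection onto the vertices of $T_+$: at $v(X)$ the three $3$-cells of the edge pointing back toward $e$ are shared with the parent vertex, so $X$ must be the remaining ("opposite") $3$-cell, and conversely every vertex is hit exactly once. By the defining recursion of $F_e$ one has $F_e(X)=p+q+r$, where $\{p,q,r\}$ are the $F_e$-values carried by those three cells of the edge at $v(X)$.

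As $v$ ranges over $T_+$ these unordered triples $\tau_v=\{p,q,r\}$ are exactly the labels of the rooted ternary tree $\mathcal{U}$ with root $\{1,1,1\}$ whose three children of a label $\{p,q,r\}$ are $\{q,r,p+q+r\}$, $\{p,r,p+q+r\}$ and $\{p,q,p+q+r\}$. The crux is that $v\mapsto\tau_v$ is boundedly many-to-one. First, $\mathcal{U}$ is genuinely a tree: in any label $\{a\leq b\leq c\}\neq\{1,1,1\}$ the freshly created entry is the strict maximum $c$, so its unique parent must be $\{a,b,c-a-b\}$, and the $\mathcal{U}$-path from $\{1,1,1\}$ to a given $\tau$ is unique. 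A vertex $v$ with $\tau_v=\tau$ is then a lift of this path through the corresponding tree of \emph{ordered} triples, and at each step the number of ways to extend a lift is the number of ordered children carrying the prescribed next unordered label: this equals $3$ at the root $(1,1,1)$ (whose three children all carry the label $\{1,1,3\}$), equals $2$ at the single step where the path leaves the spine $\{1,1,1\},\{1,1,3\},\{1,1,5\},\dots$ (an ordered triple $(1,1,x)$ has two children with equal unordered label), and equals $1$ at every other step (distinct entries produce three distinct child-labels). Hence the fibres of $v\mapsto\tau_v$ have size at most $3\cdot2=6$. Since $\mathcal{U}$ is merely a set of triples of positive integers, $\#\{\tau=\{p,q,r\}\in\mathcal{U}:p+q+r\leq N\}\leq\#\{(p,q,r)\in\mathbb{Z}_{\geq1}^3:p+q+r\leq N\}=O(N^3)$; combining this with the fibre bound and with the two sides $T_\pm$ gives $a_N=O(N^3)$, as required.

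I expect the bounded-multiplicity assertion (equivalently, the tree structure of $\mathcal{U}$ plus the spine bookkeeping) to be the only genuinely delicate step; the rest is the combinatorics of the $4$-regular tree $\Omega^1$, which is already available, together with elementary summation. As a safety net, any bound of the shape $N^{o(1)}$ on the fibre sizes already yields $\sum_{X}F_e(X)^{-\sigma}<\infty$ for every $\sigma>3$, so only \emph{some} uniform control of the fibres --- not the precise constant --- is actually needed.
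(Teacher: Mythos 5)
Your proposal is correct and follows essentially the same route as the paper: reduce to $f=F_e$, identify $3$-cells with the vertices/edges of the rooted ternary tree of triples generated from $\{1,1,1\}$ by the sum rule, bound the multiplicity of the passage to unordered triples by a universal constant, and count triples to obtain an $O(N^3)$ cumulative count, whence convergence for $\sigma>3$. The only differences are cosmetic: the paper sharpens the triple count using coprimality and the Jordan totient $J_2$ (getting the level-set bound $4J_2(n)$ and the closed form $4\zeta(\sigma-2)/\zeta(\sigma)$), which is not needed for the stated threshold, while your lift-counting argument for the fibre bound ($\leq 6$ via the spine analysis) is actually more explicit than the paper's brief appeal to ``uniqueness up to symmetry of values of $F_e$ on paths.''
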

\begin{proof} It suffices for us to show that this sum converges for $f= F_e$.  We do this by bounding the growth of the level sets of $F_e$. We will prove that:
\begin{equation}  \label{fibtot}
\Card\left\{\,X\in\Omega^3\mid\ F_e(X)=n\right\}<4J_2(n)
\end{equation}
where $J_k$ is the Jordan totient function.  Hence
\begin{align*}
\sum_{X\in\Omega}F_e(X)^{-\sigma}<\sum_{n\geq1}4J_2(n)n^{-\sigma}=\frac{4\zeta(\sigma-2)}{\zeta(\sigma)}
\end{align*}
for $\zeta$ the Riemann zeta function,
and the sum converges for $\sigma>3$.\newline
\newline
For the remainder of this proof, we think of $F_e$ not just as a function on the $3$-cells $\Omega^3$, but also as a set-valued function on the 1-cells, where it assigns to each edge $\{\alpha,\beta,\gamma\}\in\Omega^1$ the unordered 3-tuple $\{F_e(\{\alpha\}),F_e(\{\beta\}),F_e(\{\gamma\})\}$.\newline
\newline
When $n>1$, there is a $1:3$ correspondence between
\begin{align*}
\{X\in\Omega^3\mid F_e(X)=n\}\text{ and }\left\{\{\alpha,\beta,\gamma\}\in\Omega^1\mid \max F_e(\{\alpha,\beta,\gamma\}) =n\right\}
\end{align*}
defined by assigning to $X\in\Omega^3$ the three edges closest to $e$ that lie on $X$.  By uniqueness up to symmetry of values of $F_e$ on paths in $\Omega$, the preimage of any unordered triple $\{l,m,n\}$ in the image of $F_e$ has cardinality at most:
\begin{itemize}
\item 1, if $\{l,m,n\}=\{1,1,1\}$,
\item 6, if $\{l,m,n\}=\{1,1,n\}$ and
\item 12, if $\{l,m,n\}$ are all distinct integers.
\end{itemize}
Thus, the relation given by assigning to a 3-cell $X$ the unordered 3-tuples of values of $F_e$ on the edges on $X$ closest to $e$ is at most $4:1$.  Any triple $\{l,m,n\}$ that is in the image of $F_e$ must be relatively prime. Otherwise, a common factor would inductively propagate back to $e$ and contradict the starting value of $\{1,1,1\}$. Thus, for $n>1$,
\begin{align*}
\Card\left\{X\in\Omega^3\mid F_e(X)=n\right\}&\leq 4\,\Card\left\{\, \{l,m,n\} \mid l,m<n\text{ and }\gcd(l,m,n)=1\right\}\\
&<4\,\Card\left\{\, (l,m,n) \mid l,m\leq n\text{ and }\gcd(l,m,n)=1\right\}\\
&=4J_2(n)\text{, and \eqref{fibtot} holds as required.}
\end{align*}
\end{proof}

These results enable us to conclude that: if the function
\begin{align*}
\log^+|\phi|:\Omega^3\rightarrow[0,\infty)
\end{align*}
satisfies the following inequality at every vertex $\{a,b,c,d\}\in\Omega^0$:
\begin{align}
\log^+|d|\leq\log^+|a|+\log^+|b|+\log^+|c|+2\log\left(\frac{1+\sqrt{13}}{2}\right),\label{ineq}
\end{align}
where $\log^+(x):=\mathrm{max}\{0,\log(x)\}$, then:
\begin{lem}
$\log^+|\phi|$ has an upper Fibonacci bound on $\Omega^3$.
\end{lem}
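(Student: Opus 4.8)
The plan is to derive this as a direct application of Lemma~\ref{upper}, with the inequality~\eqref{ineq} serving as exactly the hypothesis of that lemma.

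First I would record that the constant is admissible: since $\sqrt{13}>1$, the number $\tfrac{1+\sqrt{13}}{2}$ exceeds $1$, so $c:=\log\!\left(\tfrac{1+\sqrt{13}}{2}\right)$ is strictly positive, and the additive term in~\eqref{ineq} is precisely $2c$. Now fix any edge $e\in\Omega^1$ and let $X_1,X_2,X_3$ be the three $3$-cells whose common intersection is $e$. Writing $f:=\log^+|\phi|\colon\Omega^3\to[0,\infty)$, I claim that $f$ satisfies the hypothesis of Lemma~\ref{upper} for this $e$ and this $c$. Indeed, whenever four $3$-cells $A,B,C,D$ meet at a vertex and $D$ is strictly farther from $e$ than $A,B,C$, the values $\{\phi(A),\phi(B),\phi(C),\phi(D)\}$ form a Markoff quad (the vertex relation~\eqref{vertrel}), so~\eqref{ineq} applied at that vertex with $d=\phi(D)$ and $\{a,b,c\}=\{\phi(A),\phi(B),\phi(C)\}$ reads $f(D)\le f(A)+f(B)+f(C)+2c$, which is exactly what is demanded.

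Lemma~\ref{upper} then yields $f(X)\le(M+c)F_e(X)-c\le(M+c)F_e(X)$ for every $X\in\Omega^3$, where $M:=\max\{f(X_1),f(X_2),f(X_3)\}$ is a finite nonnegative constant. Setting $\kappa:=M+c>0$ gives $\log^+|\phi|(X)\le\kappa F_e(X)$ for all $X\in\Omega^3$, which is an upper Fibonacci bound in the sense of Definition~\ref{fibound}; and by the edge-independence observation recorded immediately before this lemma, the particular choice of $e$ is immaterial.

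I do not expect a genuine obstacle here: the substantive work has already been carried out in Lemma~\ref{upper} and is encoded in hypothesis~\eqref{ineq}, so what remains is essentially bookkeeping — checking that $c>0$ so that Lemma~\ref{upper} applies, that $M$ is finite, and that the cell playing the role of ``$d$'' in~\eqref{ineq} may be taken to be the cell $D$ that is farthest from $e$ at each vertex (this is harmless because~\eqref{ineq} is posited at every vertex, for any of its four cells in the role of $d$).
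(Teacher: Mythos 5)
Your reduction to Lemma~\ref{upper} is correct and is exactly what the paper dispatches in its opening sentence (``by the preceding comment, we only need to show that \eqref{ineq} holds''). The problem is that you have treated \eqref{ineq} as a hypothesis handed to you, when in fact it is the entire content of the lemma: the statement is an unconditional assertion about Markoff maps, and nothing in the vertex relation $(a+b+c+d)^2=abcd$ gives you
$\log^+|d|\leq\log^+|a|+\log^+|b|+\log^+|c|+2\log\bigl(\tfrac{1+\sqrt{13}}{2}\bigr)$
for free. The conditional phrasing in the text preceding the lemma is only signposting the logic; the proof must still establish the antecedent.

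Concretely, the missing step is to solve the quadratic for $d$,
\begin{align*}
d=\frac{abc}{4}\left(1\pm\sqrt{1-4\left(\tfrac{1}{ab}+\tfrac{1}{ac}+\tfrac{1}{bc}\right)}\,\right)^{2},
\end{align*}
and estimate $\log^+|d|$ from it. When $|a|,|b|,|c|\geq 1$ the radicand has modulus at most $13$ and the bound falls out with the constant $2\log\bigl(\tfrac{1+\sqrt{13}}{2}\bigr)$, which is where that otherwise mysterious constant comes from. But when some of $|a|,|b|,|c|$ are less than $1$, the corresponding $\log^+$ terms on the right-hand side of \eqref{ineq} are truncated to zero while those factors still contribute to $|abc|$, so one must absorb the small factors into the bracketed expression and re-estimate; the paper does this in four separate cases. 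This case analysis is precisely the part you have labelled ``bookkeeping'' and omitted, and without it the proof is incomplete.
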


\begin{proof}
By the preceding comment, we only need to show that \eqref{ineq} holds. To begin with, we see that when $|d|\leq1$, the desired identity is trivially satisfied. We therefore confine ourselves to when $|d|>1$, that is: when $\log|d|=\log^+|d|$. We now assume without loss of generality that $|a|\leq|b|\leq|c|$ and case-bash the desired result.

\begin{enumerate}
\item
If $1\leq|a|,|b|,|c|$, then:
\begin{align*}
\log^+|d|=\log|d|=&\log|abc|+2\log\left|\frac{1}{2}\left(1\pm\sqrt{1-4(\frac{1}{ab}+\frac{1}{ac}+\frac{1}{bc})}\right)\right| \\
\leq&\log|abc|+2\log\left|\frac{1}{2}\left(1+\sqrt{1+4(\frac{1}{|ab|}+\frac{1}{|ac|}+\frac{1}{|bc|})}\right)\right|\\
\leq&\log|a|+\log|b|+\log|c|+2\log\left(\frac{1+\sqrt{13}}{2}\right)
\end{align*}

\item
If $|a|<1\leq|b|,|c|$, then:
\begin{align*}
\log^+|d|=&\log|bc|+2\log\left|\frac{1}{2}\left(\sqrt{|a|}\pm\sqrt{1-4(\frac{|a|}{ab}+\frac{|a|}{ac}+\frac{|a|}{bc})}\right)\right| \\
\leq&\log|bc|+2\log\left|\frac{1}{2}\left(\sqrt{|a|}+\sqrt{1+4(\frac{1}{|b|}+\frac{1}{|c|}+\frac{|a|}{|bc|})}\right)\right|\\
\leq&\log|b|+\log|c|+2\log\left(\frac{1+\sqrt{13}}{2}\right)
\end{align*}

\item
And similarly, if $|a|,|b|<1\leq|c|$, then:
\begin{align*}
\log^+|d|=&\log|c|+2\log\left|\frac{1}{2}\left(\sqrt{|ab|}\pm\sqrt{1-4(\frac{|ab|}{ab}+\frac{|ab|}{ac}+\frac{|ab|}{bc})}\right)\right| \\
\leq&\log|c|+2\log\left|\frac{1}{2}\left(\sqrt{|ab|}+\sqrt{1+4(1+\frac{|b|}{|c|}+\frac{|a|}{|c|})}\right)\right|\\
\leq&\log|c|+2\log\left(\frac{1+\sqrt{13}}{2}\right)
\end{align*}

\item
And finally, if $|a|,|b|,|c|<1$, then:
\begin{align*}
\log^+|d|=&2\log\left|\frac{1}{2}\left(\sqrt{|abc|}\pm\sqrt{1-4(\frac{|abc|}{ab}+\frac{|abc|}{ac}+\frac{|abc|}{bc})}\right)\right| \\
\leq&2\log\left(\frac{\sqrt{|ab|}+\sqrt{1+4(|a|+|b|+|c|)}}{2}\right)\leq2\log\left(\frac{1+\sqrt{13}}{2}\right)
\end{align*}
\end{enumerate}
\end{proof}

We now aim to show that $\log|\phi|$ has a lower Fibonacci bound, and introduce the following notation: given an oriented edge $\vec{e}$ on $\Omega$, the removal of the edge $e$ from the tree in $\Omega$ results in two connected components. We denote the collection of $3$-cells containing edges from the tree on the \emph{head-side} of $\vec{e}$ by
\begin{align*}
\Omega^3_+(\vec{e})\text{; and }\Omega^3_-(\vec{e})
\end{align*}
for the the collection of $3$-cells containing edges from the tree on the \emph{tail-side} of $\vec{e}$. We also use the notation $\Omega_0^3(e)=\Omega^3_+(\vec{e})\cap\Omega^3_-(\vec{e})$ to refer to the three edges containing $e$.

\begin{lem}
Given an oriented edge $\vec{e}\in\vec{\Omega}^1$ such that $\Omega^3_0(e)\cap\Omega^3_\phi(2)=\varnothing$, then $\Omega^3_\phi(2)$ lies on the head-side of $e$, that is:
\begin{align*}
\Omega^3_\phi(2)\subseteq\Omega^3_+(\vec{e}).
\end{align*}
Furthermore, all oriented edges in $\Omega^3_-(\vec{e})$ must point toward $e$.
\end{lem}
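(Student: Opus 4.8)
The plan is to prove both assertions at once by an induction that propagates outward along the tail-side subtree from the endpoint $v_-=\{\alpha,\beta,\gamma,\delta'\}$ of $\vec e$. Write $(a,b,c,d')$ and $(a,b,c,d)$ for the Markoff quads at $v_-$ and at the head endpoint $v_+=\{\alpha,\beta,\gamma,\delta\}$ respectively; the hypothesis $\Omega^3_0(e)\cap\Omega^3_\phi(2)=\varnothing$ is exactly the statement $|a|,|b|,|c|>2$. The first step is to upgrade this to: \emph{all four} $3$-cells meeting $v_-$ have trace magnitude $>2$, i.e.\ also $|d'|>2$. Indeed, if $|d'|\le 2$ then $|d|\le|d'|\le 2$, so the edge relations \eqref{eq:edge} give $|a+b+c|^2=|dd'|\le 4$, whence $|abc|=|d+d'+2(a+b+c)|\le 4+2\cdot 2=8$, contradicting $|abc|=|a|\,|b|\,|c|>8$.

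Next I would set up the induction. For a tail-side vertex $v$ let $n(v)$ be its tree distance to $v_-$, and prove by induction on $N\ge 0$ the combined statement: every tail-side vertex at distance $\le N$ is a funnel whose unique outgoing edge points toward $v_-$ (the edge $e$ playing that role at $v_-$ itself), and every $3$-cell meeting a vertex at distance $\le N$ has trace magnitude $>2$. The base case $N=0$ combines the trace bound just proved with Lemma~\ref{th:<2}: by definition of the tail side $\vec e$ points out of $v_-$, and a second outgoing edge at $v_-$ would, by Lemma~\ref{th:<2} applied to that pair, put one of $\{\alpha\},\{\beta\},\{\gamma\}$ into $\Omega^3_\phi(2)$, which is excluded. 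For the inductive step, let $n(v)=N$ with inward neighbour $v'$ at distance $N-1$: since $v'$ is a funnel directed toward $v_-$, the edge $v'v$ is incoming at $v'$ and hence outgoing at $v$, and a second outgoing edge at $v$ would, by Lemma~\ref{th:<2}, exhibit a $3$-cell of trace magnitude $\le 2$ lying on $v'v$ and therefore meeting $v'$ --- contradicting the inductive hypothesis. Thus $v$ too is a funnel with outgoing edge $v'v$. Finally, the unique $3$-cell meeting $v$ but not $v'$ is the flip, across the now-known-to-be-outgoing edge $v'v$, of a $3$-cell meeting $v'$; since an oriented edge runs from the larger to the smaller of the two $3$-cells it separates, this new cell has trace magnitude at least that of a $3$-cell meeting $v'$, hence $>2$. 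This closes the induction.

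Letting $N\to\infty$, the induction shows that every edge of the tail-side subtree points toward $e$ (the funnel statement read edge by edge), and that every $3$-cell in $\Omega^3_-(\vec e)$ has trace magnitude $>2$, so $\Omega^3_-(\vec e)\cap\Omega^3_\phi(2)=\varnothing$; since $\Omega^3=\Omega^3_+(\vec e)\cup\Omega^3_-(\vec e)$, this gives $\Omega^3_\phi(2)\subseteq\Omega^3_+(\vec e)$. The crux --- and the reason the two halves cannot be proved in sequence --- is their circular dependence: controlling the edge orientations on the tail side requires the absence of short $3$-cells there, while excluding short $3$-cells requires the orientations. The simultaneous induction dissolves this circularity, and it can only be started because the short trace computation in the first step pins down the one $3$-cell at $v_-$, namely $\{\delta'\}$, that the hypothesis does not directly constrain.
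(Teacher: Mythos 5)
Your argument is correct, but it reaches the conclusion by a genuinely different route than the paper. The paper's proof is global in flavour: it first invokes the connectedness of $\Omega^3_\phi(2)$ (Lemma~\ref{th:conn}) to conclude that this set lies entirely on one side of $e$, rules out the tail side by a shortest-path argument whose internal vertices would violate Lemma~\ref{th:<2}, and then deduces the orientation statement by combining the resulting funnel structure with the classification of infinite descending rays in Lemma~\ref{ray}. You replace both global inputs --- Lemma~\ref{th:conn} and Lemma~\ref{ray} --- with two purely local ingredients: the short computation from \eqref{eq:edge} showing $|d'|>2$ (so that \emph{all four} cells at the tail vertex, not just the three named in the hypothesis, avoid $\Omega^3_\phi(2)$), and a simultaneous outward induction in which the funnel property at each tail-side vertex and the trace bound on the newly encountered $3$-cell reinforce one another via Lemma~\ref{th:<2} and the orientation convention $|d|\geq|d'|$ along an outgoing edge. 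Your version is more self-contained and makes explicit the circularity that the simultaneous induction resolves; the paper's version is shorter on the page because it amortises the work into lemmas it needs anyway for Theorem~\ref{converge}. I checked the details --- the base case, the identification of the offending $3$-cell in Lemma~\ref{th:<2} as one lying on the edge $v'v$ and hence meeting $v'$, and the final passage from ``every $3$-cell meeting a tail-side vertex has trace magnitude $>2$'' to $\Omega^3_\phi(2)\subseteq\Omega^3_+(\vec{e})$ --- and they all go through.
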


\begin{proof}
Due to the connectedness of $\Omega^3_\phi(2)$, it must lie in either $\Omega^3_+(\vec{e})$ or $\Omega^3_-(\vec{e})$. If it lies on the tail side of $\vec{e}$ then consider a shortest path containing $\vec{e}$ and touching $\Omega^3_\phi(2)$. No $3$-cell in $\Omega^3_\phi(2)$ may be in direct contact with $\vec{e}$ as this would force the region on the other end of $\vec{e}$ be in $\Omega^3_\phi(2)$ --- yielding a contradiction.\newline
\\
Hence, we have a path of length at least $2$ with outwardly oriented edges at the two end of this path. Thus resulting in at least one internal vertex with two outward pointing edges and hence an adjacent result in $\Omega^3_\phi(2)$. This then contradicts the shortest assumption we placed on our path.\newline
\\
We have shown that $\Omega^3_\phi(2)$ is on the head-side of $\vec{e}$ and by lemma~\ref{th:<2}, every vertex on the tail-side of $\vec{e}$ must have three incoming edges and one outgoing edge. Then lemma~\ref{ray} forces all of these edges to point toward $e$.
\end{proof}

\begin{lem}\label{tighter}
Given the hypotheses of the above result, define:
\begin{align*}
\mu:=\mathrm{min}\left\{\log^+|\phi(X_i)|+\log^+|\phi(X_j)|\mid X_k\in\Omega_0^3(\vec{e})\right\}>2\log2,
\end{align*}
then for every tail-side 3-cell $X\in\Omega_-^3(\vec{e})-\Omega^3_0(e)$, we have:
\begin{align*}
\log^+|\phi(X)|\geq(\mu-2\log2)F_e(X)+\log2,
\end{align*}
and hence $\log^+|\phi|$ has a lower Fibonacci bound over $\Omega^3_-(\vec{e})$.
\end{lem}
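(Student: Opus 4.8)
The plan is to apply Lemma~\ref{lower} to the function $f=\log^+|\phi|\colon\Omega^3\to[0,\infty)$, with the oriented edge $\vec e$ and the constant $c=\log 2$. Three things must be checked: that $\mu>2\log 2$ (so $c<\mu$, as Lemma~\ref{lower} demands); that $\log^+|\phi|$ agrees with $\log|\phi|$ on every cell that enters the argument; and that the Fibonacci recursion $f(D)\ge f(A)+f(B)+f(C)-2\log 2$ holds at every vertex of the tail-side subtree, which is all that the proof of Lemma~\ref{lower} uses.

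The first two are immediate from the preceding lemma. Its hypothesis is $\Omega^3_0(e)\cap\Omega^3_\phi(2)=\varnothing$, so each of the three cells $X_1,X_2,X_3$ containing $e$ has $|\phi(X_i)|>2$, hence $\log^+|\phi(X_i)|=\log|\phi(X_i)|>\log 2$, and therefore $\mu=\min_{i\neq j}\bigl(\log^+|\phi(X_i)|+\log^+|\phi(X_j)|\bigr)>2\log 2$. Moreover the preceding lemma gives $\Omega^3_\phi(2)\subseteq\Omega^3_+(\vec e)$, and since $\Omega^3_-(\vec e)$ meets $\Omega^3_+(\vec e)$ only along $\Omega^3_0(e)$, every cell of $\Omega^3_-(\vec e)-\Omega^3_0(e)$ has $|\phi|>2$; thus $\log^+|\phi|=\log|\phi|$ throughout $\Omega^3_-(\vec e)$.

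For the recursion, fix a vertex $v$ of the tail-side subtree, let $D$ be the unique $3$-cell at $v$ not shared with the neighbour of $v$ that lies one step closer to $e$, and let $A,B,C$ be the remaining three $3$-cells at $v$. Write $a,b,c,d$ for the $\phi$-values and $d'$ for the value on the cell that replaces $D$ under the flip toward $e$, so that $d,d'$ are the roots of the edge polynomial $x^2-(abc-2a-2b-2c)\,x+(a+b+c)^2$ and $F_e(D)=F_e(A)+F_e(B)+F_e(C)$. The edge of $v$ toward $e$ is the one flipping $D$ to $D'$, and by the second conclusion of the preceding lemma it points toward $e$; hence $|d|\ge|d'|$, i.e.\ $d$ is the root of larger modulus. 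Writing $d=\tfrac14 abc\,(1+w)^2$ with $w^2=1-\tfrac{4(a+b+c)}{abc}$ and choosing the square root with $\mathrm{Re}(w)\ge 0$ — exactly the condition $|1+w|\ge|1-w|$, equivalent to $|d|\ge|d'|$ — we get $|1+w|^2=1+2\,\mathrm{Re}(w)+|w|^2\ge 1$ and hence $|d|\ge\tfrac14|abc|$. Since $a,b,c$ are values on cells of $\Omega^3_-(\vec e)$ they all have modulus $>2$, and so does $d$; taking logarithms,
\begin{align*}
\log^+|d|=\log|d|\ge\log|a|+\log|b|+\log|c|-2\log 2=\log^+|a|+\log^+|b|+\log^+|c|-2\log 2,
\end{align*}
which is the recursion at $v$.

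Lemma~\ref{lower} now applies with $f=\log^+|\phi|$, $c=\log 2<\mu$, and $\vec e$, giving $\log^+|\phi(X)|\ge(\mu-2\log 2)F_e(X)+\log 2$ for every $X\in\Omega^3_-(\vec e)-\Omega^3_0(e)$, as claimed; and since $\mu-2\log 2>0$ this yields $\tfrac{1}{\kappa}F_e(X)\le\log^+|\phi(X)|$ with $\kappa=(\mu-2\log 2)^{-1}$ on all of $\Omega^3_-(\vec e)$ outside the three cells of $\Omega^3_0(e)$, i.e.\ a lower Fibonacci bound over $\Omega^3_-(\vec e)$. The main obstacle is the recursion step: one must invoke the preceding lemma to know that the cell $D$ pointing away from $e$ necessarily carries the root of larger modulus, and only then does the elementary inequality $|d|\ge\tfrac14|abc|$ close the argument; the restriction to the tail side, where no cell has $|\phi|\le 2$, is precisely what lets us replace $\log^+$ by $\log$ throughout and avoid the delicate small-modulus cases of the upper-bound lemma.
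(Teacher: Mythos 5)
Your proposal is correct and follows essentially the same route as the paper: use the preceding lemma to see that all tail-side edges point toward $e$ and that $\log^+|\phi|=\log|\phi|>\log 2$ there, deduce $|d|\geq\tfrac14|abc|$ at each tail-side vertex, and feed the resulting recursion into Lemma~\ref{lower} with $c=\log 2$. The only cosmetic difference is that you extract $|d|\geq\tfrac14|abc|$ from the explicit quadratic formula for the larger root, while the paper gets it from $\bigl|\tfrac{d}{a+b+c+d}\bigr|\geq\tfrac12$ combined with the vertex relation; these are the same inequality.
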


\begin{proof}
Let $\{\alpha\},\{\beta\},\{\gamma\},\{\delta\}\in\Omega_-^3(\vec{e})$ be the adjacent 3-cells to an arbitrarily chosen tail-side vertex, such that $\{d\}$ is farthest from $e$. Then we know from every edge being naturally directed towards $e$ that:
\begin{align*}
\sqrt{\frac{|d|}{|abc|}}\geq\frac{1}{2}\Rightarrow \log|d|\geq \log|a|+\log|b|+\log|c|-2\log2.
\end{align*}
Since this is satisfied for every tail-side vertex, lemma~\ref{lower} then gives the desired conclusion.
\end{proof}

Fix an arbitrary $2$-cell $\{\alpha,\beta\}\in\Omega^2$, its boundary is a bi-infinite path which we label by $\{e_n\}_{n\in\mathbb{Z}}$. Each edge $e_n$ is the intersection of three distinct 3-cells, two of which are $\{\alpha\},\{\beta\}$ and the last we'll denote by $\{\gamma_n\}$.

\begin{lem}\label{growth}
Given the above setup, then:
\begin{enumerate}
\item
If $ab\notin [0,4]$, then either $|c_n|$ grows exponentially as $n\rightarrow\pm\infty$ or $c_n=0$.
\item
If $ab\in [0,4)$, then $|c_n|$ remains bounded.
\item
If $ab=4$, then $c_n=A+Bn-(a+b)n^2$ for some $A,B\in\mathbb{C}$.
\end{enumerate}
\end{lem}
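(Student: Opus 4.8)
The plan is to show that the sequence $(c_n)_{n\in\mathbb Z}$ of the third vertices along the boundary of $\{\alpha,\beta\}$ satisfies a second-order linear recurrence with constant coefficients, and then to read off all three conclusions from its explicit solution.

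First I would record the combinatorics along the boundary path. Consecutive edges $e_n=\{\alpha,\beta,\gamma_n\}$ and $e_{n+1}=\{\alpha,\beta,\gamma_{n+1}\}$ meet at the vertex $\{\alpha,\beta,\gamma_n,\gamma_{n+1}\}\in\Omega^0$, so $e_n$ is exactly the flip-edge across which $\gamma_{n-1}$ and $\gamma_{n+1}$ are interchanged. Applying the edge relation \eqref{eq:edge} along $e_n$, with $(a,b,c_n)$ in the role of the fixed triple and $(c_{n-1},c_{n+1})$ the flipped pair, gives
\begin{align*}
c_{n-1}+c_{n+1}+2(a+b+c_n)=ab\,c_n ,
\end{align*}
that is, after re-indexing,
\begin{align*}
c_{n+2}=(ab-2)\,c_{n+1}-c_n-2(a+b).
\end{align*}
The product half of \eqref{eq:edge} also yields $c_{n-1}c_{n+1}=(a+b+c_n)^2$, and the vertex relation at each vertex gives $(a+b+c_n+c_{n+1})^2=ab\,c_nc_{n+1}$; the linear recurrence does almost all the work, while these last two identities are needed only to settle degenerate configurations.

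Next I would solve the recurrence through its characteristic polynomial $\lambda^2-(ab-2)\lambda+1$, whose roots $\lambda_\pm$ satisfy $\lambda_+\lambda_-=1$. A short argument shows $|\lambda_+|=|\lambda_-|$ forces $|\lambda_\pm|=1$ and $\lambda_-=\overline{\lambda_+}$, hence $ab-2=2\,\mathrm{Re}\,\lambda_+\in[-2,2]$, i.e.\ $ab\in[0,4]$; equivalently, $ab\notin[0,4]$ iff, after relabelling, $|\lambda_+|>1>|\lambda_-|$. When $ab=4$ the root $1$ is double and, since the second difference of $n^2$ is the constant $2$, the general solution of the recurrence is $A+Bn-(a+b)n^2$; as $(c_n)$ is one such sequence, conclusion (3) is immediate. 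When $ab\in(0,4)$ the roots are distinct on the unit circle and $K=2(a+b)/(ab-4)$ is a constant particular solution, so $c_n=A\lambda_+^n+B\lambda_-^n+K$ is bounded; the borderline value $ab=0$ (double root $-1$) forces $a=0$ or $b=0$, whereupon the vertex relation collapses to $a+b+c_n+c_{n+1}=0$ for all $n$, which kills the would-be linear term and again leaves $(c_n)$ bounded — this is (2). When $ab\notin[0,4]$ the general solution is $c_n=A\lambda_+^n+B\lambda_-^n+K$ (with $K=2(a+b)/(ab-4)$, or $K=0$ when $a+b=0$): if $A\neq0$ then $|c_n|\sim|A|\,|\lambda_+|^n\to\infty$ as $n\to+\infty$; if $B\neq0$ then $|c_n|\to\infty$ as $n\to-\infty$ since $|\lambda_-|<1$; and if $A=B=0$ the constant sequence $c_n\equiv K$ must satisfy $K^2=(a+b+K)^2$ by the product relation, forcing $a+b=0$ and then $K=0$. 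Thus either $c_n\equiv0$ or $|c_n|$ grows exponentially as $n\to+\infty$ or as $n\to-\infty$, which is (1).

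The computations are all elementary, so I expect the difficulty to be organisational rather than conceptual: getting the indexing of the flip right so that the recurrence comes out with the correct coefficients, and then treating the degenerate root patterns ($ab=0$, and $a+b=0$ inside case (1)) carefully, since there one must appeal to the vertex and product relations rather than the bare linear recurrence in order to exclude spurious unbounded-linear or nonzero-constant behaviour.
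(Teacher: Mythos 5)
Your derivation of the recurrence from the edge relation, the use of the characteristic polynomial $\lambda^2-(ab-2)\lambda+1$ with roots multiplying to $1$, and your treatment of cases (2) and (3) --- including the degenerate value $ab=0$, where the vertex relation kills the linear term coming from the double root $-1$ --- all coincide with the paper's argument. The problem is case (1). You read the conclusion as ``exponential growth in at least one of the two directions,'' and accordingly you only rule out the sub-case $A=B=0$. But the lemma is meant to give (and the paper's proof explicitly aims at) exponential growth as $n\to+\infty$ \emph{and} as $n\to-\infty$ unless $c_n\equiv 0$: that bi-directional growth is what feeds the Fibonacci \emph{lower} bound for $\log^+|\phi|$ along a spiral around a 2-cell in the proof of Theorem~\ref{converge}, and a spiral is bi-infinite. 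Your argument leaves untouched the mixed case, say $A\neq 0$ and $B=0$, in which $c_n\to K$ as $n\to-\infty$ and the required growth fails in that direction.

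The paper excludes the mixed case by back-substitution: if $B=0$ and $|\lambda|>1$, then $c_n=A\lambda^n+K$ with $K=2(a+b)/(ab-4)$; feeding this into the vertex relation and letting $n\to-\infty$ yields $(ab)^2(a+b)^2=4ab(a+b)^2$, hence $a+b=0$ since $ab\neq 0,4$, whence $K=0$ and $c_n=A\lambda^n$; it then argues that $A$ must vanish. You need to supply an argument of this kind. Be aware that the final step is delicate: when $a+b=0$ the pure exponential $c_n=A\lambda^n$ satisfies the vertex relation identically, so excluding $A\neq 0$ genuinely requires extra input beyond the single boundary path --- for instance, for $\phi\in\Phi_{BQ}$ the decay $|ac_n|\to 0$ in one direction would contradict the finiteness of $\Omega^2_\phi(k)$. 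As written, your proof establishes a strictly weaker statement than the one the paper states and later uses.
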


\begin{proof}
The edge relation~\eqref{edgerel} then tells us that:
\begin{align*}
c_{n+1}+(2-ab)c_n+c_{n-1}+2(a+b)=0.
\end{align*}

If $ab\neq 0,4$, we may solve for this difference equation:
\begin{align}\label{eq:difference}
c_n=A\lambda^n+B\lambda^{-n}-\frac{2(a+b)}{4-ab}\text{, where }\lambda^{\pm1}=\frac{1}{2}(ab-2\pm\sqrt{ab(ab-4)}).
\end{align}
\textbf{Case (1):} Assume that $ab\notin [0,4]$.  Since $ab=(\lambda^{\frac{1}{2}}+\lambda^{-\frac{1}{2}})^2$, then the fact that $|\lambda|=1$ if and only $ab\in[0,4]$ means that $|\lambda|\neq1$. Thus, to show that $|c_n|$ grows exponentially, it suffices to show that neither $A$ or $B$ equals $0$. We prove this by contradiction: assume wlog that $B=0$ and that $|\lambda|>1$, then
\begin{align*}
c_n=A\lambda^n-\frac{2(a+b)}{4-ab}.
\end{align*}
Substituting this into equation~\eqref{quad} and taking the limit as $n\rightarrow-\infty$, we obtain that:
\begin{align*}
\left(\frac{ab(a+b)}{4-ab}\right)^2=4ab\left(\frac{a+b}{4-ab}\right)^2.
\end{align*}
By assumption, $ab\neq 0,4$ and therefore $a+b=0$. This in turn means that $c_n=A\lambda^n$. Substuting this into equation~\eqref{quad} shows that either $A=0$ or $|\lambda|=1$. Since $\lambda|>1$, we conclude that $c_n=0$ for all $n$.\newline
\newline
\textbf{Case (2):} If $ab\in(0,4)$, then $|\lambda|=1$ and by equation~\eqref{eq:difference}, $|c_n|$ is bounded above by $|A|+|B|+\left|\frac{2(a+b)}{4-ab}\right|$. If $ab=0$, we assume wlog that $b=0$. Then by the vertex relation \eqref{vertrel}, $c_n+c_{n-1}+a=0$ and the sequence is either constant or oscillates between two values. \newline
\newline
\textbf{Case (3):} For $ab=4$, simply solving for the edge relation~\eqref{edgerel} as a difference equation yields the desired expression for $c_n$. 
\end{proof}

\begin{thm}\label{converge}
If $\phi\in\Phi_{BQ}$, then $\log^+|\phi|$ has Fibonacci growth.
\end{thm}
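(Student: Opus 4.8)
The upper Fibonacci bound for $\log^{+}|\phi|$ is already in hand (it is the content of the lemma establishing \eqref{ineq}), so the whole task is to produce a matching lower Fibonacci bound: a $\kappa>0$ with $\log^{+}|\phi(X)|\geq\kappa^{-1}F_{e}(X)$ for all but finitely many $X\in\Omega^{3}$. The first step is to upgrade the finiteness hypothesis in $\Phi_{BQ}$: for \emph{every} $k$ the set $\Omega^{3}_{\phi}(k)$ is finite, since any two of its $3$-cells sharing a face share a $2$-cell lying in $\Omega^{2}_{\phi}(k^{2})$ (finite by hypothesis), while $\Omega^{3}_{\phi}(k)$ is connected by Lemma~\ref{th:conn}, and a connected complex with finitely many $2$-cells has finitely many $3$-cells.

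If $\Omega^{3}_{\phi}(2)=\varnothing$ the argument is short: Theorem~\ref{th:4nonempty} gives a unique sink $v_{0}$, into which all edge-orientations flow. Applying Lemma~\ref{tighter} to each of the four oriented edges $\vec e_{i}$ meeting $v_{0}$ — its parameter $\mu_{i}$ automatically exceeds $2\log 2$ because every $3$-cell incident to $\vec e_{i}$ has modulus $>2$ — gives a lower Fibonacci bound on each tail-side $\Omega^{3}_{-}(\vec e_{i})$; since the four tail-sides exhaust $\Omega^{3}$ up to finitely many cells and the $F_{\vec e_{i}}$ are mutually comparable, $\log^{+}|\phi|$ has a lower Fibonacci bound, hence Fibonacci growth.

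The real work is the case $\Omega^{3}_{\phi}(2)\neq\varnothing$. The finite sets $\Omega^{3}_{\phi}(2)$ and $\Omega^{2}_{\phi}(4)$ organise the ``small part'' of $\Omega$: by Lemma~\ref{ray}, and since $\Omega^{3}_{\phi}(2)$ is finite (so alternative~(3) of that lemma is excluded), every oriented ray eventually either settles on a $3$-cell of $\Omega^{3}_{\phi}(2)$ or spirals along the boundary of a $2$-cell $\{\xi,\eta\}\in\Omega^{2}_{\phi}(4)$. On such a boundary the third trace satisfies the difference equation of Lemma~\ref{growth}; since $\xi\eta\notin[0,4]$ (the defining property of $\Phi_{BQ}$) and $c_{n}\equiv 0$ is impossible — it would force the infinitely many distinct cells $\{\gamma_{n}\}$ into the finite set $\Omega^{3}_{\phi}(2)$ — alternative~(1) of Lemma~\ref{growth} applies and $|c_{n}|$ grows exponentially in $n$. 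I would then cover $\Omega^{3}$, up to a finite set, by the strict tail-sides of the oriented edges $\vec e$ with $\Omega^{3}_{0}(e)\cap\Omega^{3}_{\phi}(2)=\varnothing$ lying just outside this small part, so that every $3$-cell of modulus $>2$ lies in at least one such tail-side, and apply Lemma~\ref{tighter} — again with $\mu_{\vec e}>2\log 2$ automatic — to each.

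The subtlety, and the step I expect to be the main obstacle, is that this covering genuinely requires \emph{infinitely} many edges $\vec e$ (one cannot cut a whole spiral off with a single edge), so the resulting constants must be shown to be uniform. This is exactly where the exponential growth along the spirals pays off: along the boundary of $\{\xi,\eta\}$ the Fibonacci function grows only \emph{linearly} in $n$, since it repeatedly adds the two fixed values $F_{e_{0}}(\{\xi\})$ and $F_{e_{0}}(\{\eta\})$, so the constant comparing $F_{\vec e_{n}}$ to a fixed reference $F_{e_{0}}$ grows only polynomially in $n$; meanwhile Lemma~\ref{tighter} applied at the frontier edge $\vec e_{n}$ near the $n$-th spiral vertex yields a gain $\mu_{\vec e_{n}}-2\log 2$ of order $\log|c_{n}|$, which is also of order $n$. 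These two rates cancel, so a single $\kappa$ serves the whole family; the analogous but easier bookkeeping around the finitely many cells of $\Omega^{3}_{\phi}(2)$ is handled the same way. Combining this lower Fibonacci bound with the upper bound of \eqref{ineq} yields that $\log^{+}|\phi|$ has Fibonacci growth.
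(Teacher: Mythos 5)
Your proposal is correct and follows essentially the same route as the paper: split on whether the ``small'' part is empty (your $\Omega^3_\phi(2)=\varnothing$ is equivalent to the paper's $\Omega^2_\phi(4)=\varnothing$, since a $2$-cell with $|ab|\leq 4$ forces a $3$-cell with modulus $\leq 2$), handle that case via the unique sink and Lemma~\ref{tighter}, and otherwise build the lower bound from a finite core by matching the exponential trace growth of Lemma~\ref{growth} along spirals against the linear growth of $F_e$, then propagate outward through Lemma~\ref{tighter} with the uniformity of constants obtained exactly as in the paper. The one point you identify as the main obstacle --- uniformity over the infinitely many frontier edges --- is resolved by precisely the same linear-versus-linear cancellation the paper uses.
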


\begin{proof}
If $\Omega^2_\phi(4)=\varnothing$, then there is a unique sink in $\Omega^0$. Otherwise, a path between two sinks would contain some vertex with at least two outward pointing oriented edges and hence be adjacent to a 2-cell in $\Omega^2_\phi(4)$ by lemma~\ref{th:<2}. Then apply lemma~\ref{tighter} to the four oriented edges pointing into this unique sink to obtain the desired Fibonacci lower bound.\newline
\\
Otherwise,
\begin{align*}
\Omega_\phi^2(4)=\left\{\{\alpha_1,\beta_1\},\{\alpha_2,\beta_2\},\ldots,\{\alpha_l,\beta_l\}\right\}
\end{align*}
is finite but non-empty. Then let $T$ denote the smallest tree in $\Omega$ containing the boundaries of all the 2-cells in $\Omega^2_\phi(4)$. We claim that $T$ must contain every sink and saddle. Firstly, it's clear from lemma~\ref{th:<2} that every saddle lies on the boundary of some 2-cell in $\Omega^2_\phi(4)$ and hence in $T$.\newline
\\
Now take an arbitrary sink $v$. Since $\Omega^2_\phi(4)$ is non-empty, $\Omega^3_\phi(2)$ must also be non-empty. Consider the shortest path between $\Omega^3_\phi(2)$ and $v$, if the length of this path is 2 or more, then we reach a contradiction because there must be an internal vertex adjacent to a 2-cell in $\Omega^2_\phi(4)$ and hence a 3-cell in $\Omega^3_\phi(2)$. And if the length of this path is 1, then we contradict the connectedness of $\Omega^3(2)$. Hence $v$ lies on the boundary of some 3-cell $A\in\Omega^3_\phi(2)$.\newline
\\
Now, thanks to the connectedness of $\Omega^3_\phi(2)$, the boundary of $A$ must contain some 2-cell in $\Omega^2_\phi(4)$. Thus the shortest path from $v$ to $\Omega^2_\phi(4)$ lies on the boundary of $A$. Note that the 3-cell at the tail of the chosen edge on this path closest to $\Omega^2_\phi(4)$ must point towards $\Omega^2_\phi(4)$ or else produce a closer 2-cell in $\Omega^2_\phi(4)$. Hence, by similar arguments as used in the previous paragraph, $v$ must lie on the boundary of a 2-cell in $\Omega^2_\phi(4)$. \newline
\\
We now show that all but finitely many vertices are funnels.  Observe that all but finitely many edges in $T$ lie on the boundary of some 2-cell in $\Omega^2_\phi(4)$. Then lemma~\ref{growth} tells us that since $|\phi|$ grows exponentially as we traverse the boundary of a 2-cell, there can only be finitely many sinks. Further observe that by lemma~\ref{ray}, every oriented edge outside of $T$ must point into $T$. This means that along the boundary of any of the 2-cells in $\Omega^2_\phi(4)$, there must  (in all but finitely many cases) be a sink in between two saddles. Hence, we see that the number of saddles is also finite.\newline
\\
We now show that a Fibonacci lower bound holds over the set:
\begin{align*}
\Omega^3_0(T):=\{\text{ 3-cells touching }T\}.
\end{align*}
We know that all but finitely many of the 3-cells in $\Omega^3_0(T)$ spiral around some 2-cell in $\Omega^2_\phi(4)$. And lemma~\ref{growth} tells us that $\log^+|\phi|$ over each of these spirals grows linearly, and hence for the spiral $\Omega^3_0(\{\alpha_i,\beta_i\})-\{\{\alpha_i\},\{\beta_i\}\}$ around $\{\alpha_i,\beta_i\}\in\Omega^2_\phi(4)$, we have:
\begin{align*}
\log^+|\phi(X)|\geq\kappa_i F_e(X)+\mu_i,
\end{align*}
where $\kappa_i$ is a function of $|ab|$ and the minimum of $F_e$ on this spiral around $\{\alpha_i,\beta_i\}$, and $\mu_i$ may be negative. Since there are finitely many such spirals, only finitely many 3-cells in $\Omega^3_0(T)$ not on a spiral and the minimum of $\log^+|\phi|$ is greater than $0$, we see that:
\begin{align*}
\log^+|\phi(X)|\geq\kappa F_e(X),\text{ for all }X\in\Omega^3_0(T).
\end{align*}

Finally, label all the oriented edges touching but not contained in $T$ by $\{\vec{\epsilon}_i\}$ (in order of increasing distance from $e$ if you so wish), and for each $\vec{\epsilon}_i$, label the the three 3-cells in $\Omega^3_0(\epsilon_i)$ by:
\begin{align*}
\Omega^3_0(\epsilon_i)=\{X_i,Y_i,Z_i\},\text{ such that }\log^+|\phi(X_i)|\leq\log^+|\phi(Y_i)|\leq\log^+|\phi(Z_i)|.
\end{align*}
Then lemma~\ref{tighter} tells us that for any 3-cell $X\in\Omega^3_-(\vec{\epsilon}_i)$,
\begin{align*}
\log^+|\phi(X)|&\geq (\log|\phi(X_i)\phi(Y_i)|-\log(4))F_{\epsilon_i}(X),\text{ and hence }\\
&\geq \frac{\log|\phi(X_i)\phi(Y_i)|-\log(4)}{\max\{F_e(X_i),F_e(Y_i),F_e(Z_i)\}}F_e(X).
\end{align*}
Therefore, if we can show that
\begin{align*}
\inf_i\left\{\frac{\log|\phi(X_i)\phi(Y_i)|-\log(4)}{\max\{F_e(X_i),F_e(Y_i),F_e(Z_i)\}}\right\}>0
\end{align*}
then we'll have shown that $\log^+|\phi|$ has a lower Fibonacci bound over all of $\Omega^3$. And to see that this holds, first notice that by going out sufficiently far from $e$, we may effectively ignore the $\log(4)$ term. Then, because $X_i$ and $Y_i$ are in $\Omega^3_0(T)$, we see that:
\begin{align*}
\frac{\log|\phi(X_i)\phi(Y_i)|}{\max\{F_e(X_i),F_e(Y_i),F_e(Z_i)\}}\geq\frac{\kappa(F_e(X_i)+F_e(Y_i))}{\max\{F_e(X_i),F_e(Y_i),F_e(Z_i)\}}\geq\frac{\kappa}{2},
\end{align*}
thus yielding the desired Fibonacci lower bound. We complete this proof by invoking lemma~\ref{upper} for the upper Fibonacci bound.
\end{proof}

\subsection{McShane Identity}\label{sec:mcshane}

Our method for proving Theorem~\ref{th:main} follows Bowditch \cite{BowPro,BowMar}. Starting with a 4-tuple of simple closed one-sided curves $\{\alpha,\beta,{\color{red}\gamma_0},{\color{blue}\gamma_1}\}$, consider the sequence of 4-tuples produced by repeatedly flipping ${\color{red}\gamma_{2i}}$ to get ${\color{red}\gamma_{2i+2}}$ and ${\color{blue}\gamma_{2i+1}}$ to get ${\color{blue}\gamma_{2i+3}}$. 
\begin{figure}[ht]
\begin{center}
\includegraphics[scale=.35]{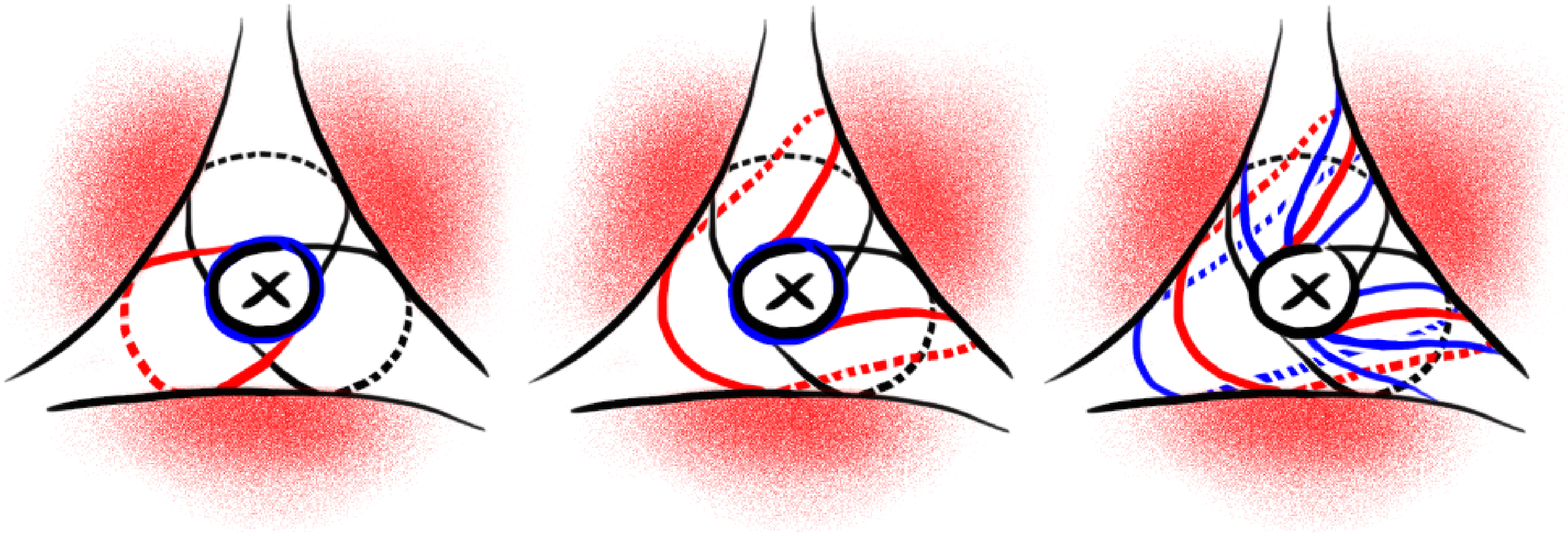}	
\end{center}
\caption{Flipping ${\color{red}\gamma_{2i}}$ followed by ${\color{blue}\gamma_{2i+1}}$.}
\label{fig:doubleflip}
\end{figure}

In this manner, we obtain a sequence of Markoff quads:
\begin{align*}
(a,b,{\color{red}c_0},{\color{blue}c_1}),(a,b,{\color{red}c_2},{\color{blue}c_1}),(a,b,{\color{red}c_2},{\color{blue}c_3}),(a,b,{\color{red}c_4},{\color{blue}c_3}),(a,b,{\color{red}c_4},{\color{blue}c_5}),\ldots
\end{align*}
By comparing the vertex relation \eqref{vertrel} at $\{\alpha,\beta,\gamma_k,\gamma_{k+1}\}$ and the edge relation \eqref{edgerel} at $\{\alpha,\beta,\gamma_k\}$, we obtain that:
\begin{align*}
\tfrac{c_{k-1}}{a+b+c_{k-1}+c_{k}}=\tfrac{c_{k}}{a+b+c_{k}+c_{k+1}}+\tfrac{a}{a+b+c_{k}+c_{k+1}}+\tfrac{b}{a+b+c_{k}+c_{k+1}}.
\end{align*}
In this decomposition for $\frac{c_{k-1}}{a+b+c_{k-1}+c_{k}}$, there is another summand of the same form but with shifted indices. Thus, starting with the the vertex relation \eqref{vertrel}, we may iteratively decompose terms of the form $\frac{c_{k-1}}{a+b+c_{k-1}+c_k}$ to obtain:
\begin{align*}
1&=\tfrac{c_0}{a+b+c_0+c_1}+\tfrac{c_1}{a+b+c_0+c_1}+\tfrac{a}{a+b+c_0+c_1}+\tfrac{b}{a+b+c_0+c_1}\\
&=\tfrac{c_1}{a+b+c_1+c_2}+\tfrac{a}{a+b+c_0+c_1}+\tfrac{b}{a+b+c_0+c_1}+\tfrac{a}{a+b+c_1+c_2}+\tfrac{b}{a+b+c_1+c_2}+\tfrac{c_1}{a+b+c_0+c_1}\\
&\hspace{5em}\vdots\\
&=\tfrac{c_n}{a+b+c_n+c_{n+1}}+\sum_{i=0}^n\left(\tfrac{a}{a+b+c_i+c_{i+1}}+\tfrac{b}{a+b+c_i+c_{i+1}}\right)+\tfrac{c_1}{a+b+c_0+c_1}.
\end{align*}
Since the edge relation \eqref{edgerel} for $\{\alpha,\beta,\gamma_n\}$ is a second order difference equation, we may explicitly compute
\begin{align*}
\lim_{n\to\infty}\frac{c_n}{a+b+c_n+c_{n+1}}=\frac{1}{1+\lambda},\text{ where }\lambda=\frac{ab-2+\sqrt{ab(ab-4)}}{2}. 
\end{align*}
When we indefinitely apply this splitting algorithm to every summand that arises, we might intuitively expect to derive a series that sums to one, whose summands each take the form $\frac{1}{1+\lambda}$ for some $\lambda$ --- this series \emph{is} the McShane identity.\newline
\newline
In the Fuchsian case, the summands $\Psi(\vec{e})$ are real numbers and correspond to the lengths of intervals in the length $1$ horocycle around one of the cusps on $X$. The complement of all of these intervals is, by construction, the union of a Cantor set and a countable set.   To show that this Cantor set is of measure $0$ in order to conclude that the series sums to $1$, we bound the measure of this Cantor set by smaller and smaller tails of the convergent series $\frac{1}{1+\lambda}$.  Convergence follows from the Fibonacci growth rates established above.\newline
\newline
Define the function $\Psi:\vec{\Omega}^1\rightarrow[0,1]$ by:
\begin{align*}
\Psi(\vec{e})=\Psi(\{\alpha,\beta,\gamma;\delta'\rightarrow \delta\}):=\frac{d}{a+b+c+d}=\frac{a+b+c+d}{abc}.
\end{align*}

Then, the edge relation \eqref{edgerel} becomes:
\begin{align*}
\Psi(\vec{e})+\Psi(\cev{e})=1,
\end{align*}
and the vertex relation \eqref{vertrel} is the following relation on four incoming oriented edges $\vec{e}_1,\vec{e}_2,\vec{e}_3,\vec{e}_4$:
\begin{align*}
\Psi(\vec{e}_1)+\Psi(\vec{e}_2)+\Psi(\vec{e}_3)+\Psi(\vec{e}_4)=1.
\end{align*}
These two properties in turn tell us that for a funnel with oriented edges $\vec{e}_1,\vec{e}_2,\vec{e}_3$ and outgoing edge $\vec{e}_4$:
\begin{align*}
\Psi(\vec{e}_4)=\Psi(\vec{e}_1)+\Psi(\vec{e}_2)+\Psi(\vec{e}_3),
\end{align*}
and so we may iteratively expand either the edge or the vertex relation \eqref{vertrel} into a statements about a finite collection of terms of the form $\Psi(\vec{e})$ summing to $1$. For a tree $T$ in the 1-skeleton of $\Omega$, if we use the notation $C(T)$ to denote
\begin{align*}
C(T):=\left\{\vec{e}\in\vec{\Omega}^1\mid\vec{e}\text{ points into, but is not contained in }T\right\},
\end{align*}
then we have:

\begin{lem}\label{equals1}
For any finite subtree $T$ in the 1-skeleton of $\Omega$,
\begin{align*}
\sum_{\vec{e}\in C(T)}\Psi(\vec{e})=1.
\end{align*}
\end{lem}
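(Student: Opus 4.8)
The plan is to induct on the number of vertices of $T$, using the fact that the $1$-skeleton of $\Omega$ is a tree: any finite subtree with more than one vertex has a leaf, and hence every finite subtree is obtained from a single vertex by repeatedly grafting on leaves. \textbf{Base case.} Take $T=\{v\}$ with $v=\{\alpha,\beta,\gamma,\delta\}\in\Omega^0$. Since $T$ contains no edge, $C(T)$ is exactly the set of four oriented edges of $\vec\Omega^1$ with head $v$, and on these $\Psi$ takes the values $\tfrac{a}{a+b+c+d},\tfrac{b}{a+b+c+d},\tfrac{c}{a+b+c+d},\tfrac{d}{a+b+c+d}$; the vertex relation \eqref{vertrel} says these sum to $1$. (One could instead start from a single edge and invoke the edge relation \eqref{edgerel}, but the one-vertex base case is cleanest.)

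\textbf{Inductive step.} Assume the identity for all finite subtrees on $n$ vertices and let $T'$ have $n+1\ge 2$ vertices. Pick a leaf $w$ of $T'$, let $e_0$ be the unique edge of $T'$ incident to $w$ (joining $w$ to some $v\in T'$), and set $T:=T'\setminus\{w,e_0\}$, a finite subtree on $n$ vertices. Since the $1$-skeleton is $4$-regular, $w$ meets exactly three further edges $e_1,e_2,e_3$, none of which lies in $T'$. I would then check the bookkeeping
\[
C(T')=\bigl(C(T)\setminus\{\vec e_0\}\bigr)\cup\{\vec e_1,\vec e_2,\vec e_3\},
\]
where each $\vec e_i$ ($i=1,2,3$) is oriented toward its head $w$, while $\vec e_0$ is oriented toward its head $v$, so that $\vec e_0$ points \emph{out of} $w$. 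Indeed, in passing from $T$ to $T'$ the only edge that becomes internal is $e_0$, so $\vec e_0$ leaves $C(T)$; the only new edges incident to $T'$ but not contained in it are $e_1,e_2,e_3$, each oriented into $w$; and because $\Omega$ is a tree, no other oriented edge of $C(T)$ is affected.

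\textbf{Closing the induction.} At $w$ the four incoming oriented edges are $\cev e_0,\vec e_1,\vec e_2,\vec e_3$, so the vertex relation \eqref{vertrel} gives $\Psi(\cev e_0)+\Psi(\vec e_1)+\Psi(\vec e_2)+\Psi(\vec e_3)=1$, and the edge relation \eqref{edgerel} gives $\Psi(\vec e_0)+\Psi(\cev e_0)=1$. Combining these with the bookkeeping above,
\begin{align*}
\sum_{\vec e\in C(T')}\Psi(\vec e)
&=\sum_{\vec e\in C(T)}\Psi(\vec e)-\Psi(\vec e_0)+\Psi(\vec e_1)+\Psi(\vec e_2)+\Psi(\vec e_3)\\
&=\sum_{\vec e\in C(T)}\Psi(\vec e)+\bigl(\Psi(\cev e_0)-1\bigr)+\bigl(1-\Psi(\cev e_0)\bigr)=\sum_{\vec e\in C(T)}\Psi(\vec e)=1,
\end{align*}
the last equality being the inductive hypothesis.

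The only point requiring genuine care — and thus the main obstacle, such as it is — is the purely combinatorial description of how $C(T)$ changes when a leaf is attached. Once that is pinned down, the edge and vertex relations make the sum telescope immediately; in particular no analytic input (and, notably, no Fibonacci growth) is needed for this lemma.
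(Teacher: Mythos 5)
Your induction is correct and is exactly the argument the paper intends: the lemma is presented there as an immediate consequence of iteratively expanding the edge and vertex relations (the ``funnel'' identity $\Psi(\vec e_4)=\Psi(\vec e_1)+\Psi(\vec e_2)+\Psi(\vec e_3)$), which is precisely your leaf-grafting step. You have simply made the bookkeeping of how $C(T)$ changes explicit, which is a welcome level of detail the paper omits.
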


Next, define the function $h:\mathbb{C}-[0,4]\rightarrow \mathbb{C}$,
\begin{align*}
h(x)=\frac{1}{2}(1-\sqrt{1-4/x})=\frac{2}{x(1+\sqrt{1-4/x})}.
\end{align*}
For an edge $e=\{\alpha,\beta,\gamma\}$, we define
\begin{align*}
h(e)=h(\{\alpha,\beta,\gamma\}):=h\left(\tfrac{abc}{a+b+c}\right)=h\left(\left(\tfrac{1}{ab}+\tfrac{1}{ac}+\tfrac{1}{bc}\right)^{-1}\right).
\end{align*}
A little algebraic manipulation shows that:
\begin{align*}
h(e)=\Psi(\vec{e})\text{ if and only if }\mathrm{Re}(\Psi(\vec{e}))\leq\mathrm{Re}(\Psi(\cev{e})).
\end{align*}
In other words, $\Psi$ of a chosen edge $\vec{e}$ is equal to $h(e)$. In fact, the main point of Theorem~\ref{converge} is to prove the following result:

\begin{lem}\label{converge2}
The following infinite series taken over $\Omega^2$ converges absolutely for all $s>0$,
\[
\sum_{\{\xi,\eta\}\in\Omega^2}|xy|^{-s}<\infty.
\]
\end{lem}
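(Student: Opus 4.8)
The plan is to combine Theorem~\ref{converge} with the counting estimate \eqref{fibtot}. We are in the situation $\phi\in\Phi_{BQ}$, so Theorem~\ref{converge} furnishes a constant $\kappa>0$ and a finite set $E_{0}\subset\Omega^{3}$ with $\log^{+}|\phi(X)|\geq\tfrac{1}{\kappa}F_{e}(X)$ for all $X\notin E_{0}$. I would first record two consequences. (a) $\phi$ is nowhere zero on $\Omega^{3}$: were $\phi(\{\alpha\})=0$, then \emph{every} $2$-cell $\{\alpha,\beta\}$ would have $ab=0\in[0,4]$ and lie in $\Omega^{2}_{\phi}(4)$, contradicting the definition of $\Phi_{BQ}$; in particular $xy\neq0$ for every $2$-cell $\{\xi,\eta\}$, where as in the statement $x=\phi(\{\xi\})$, $y=\phi(\{\eta\})$. (b) $\Omega^{3}_{\phi}(k)$ is finite for every $k$: outside the finite set $E_{0}$ we have $|\phi(X)|\geq\exp(\tfrac{1}{\kappa}F_{e}(X))$, while by \eqref{fibtot} each level set $\{F_{e}=n\}$ is finite, so $F_{e}(X)\to\infty$ as $X$ exhausts $\Omega^{3}$. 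Enlarging $E_{0}$ to a finite set $\mathcal{E}\supseteq E_{0}\cup\Omega^{3}_{\phi}(4)$, every $X\notin\mathcal{E}$ then satisfies $|\phi(X)|>1$, hence $\log|\phi(X)|=\log^{+}|\phi(X)|\geq\tfrac{1}{\kappa}F_{e}(X)$.

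Next I would split $\Omega^{2}$ according to how a $2$-cell meets $\mathcal{E}$. Let $\mathcal{F}$ be the union of $\Omega^{2}_{\phi}(4)$ with the (at most $|\mathcal{E}|^{2}$) $2$-cells whose two bounding $3$-cells both lie in $\mathcal{E}$; this is a finite set of $2$-cells, each with $xy\neq0$, so $\sum_{\{\xi,\eta\}\in\mathcal{F}}|xy|^{-s}<\infty$. Any $2$-cell not in $\mathcal{F}$ has at most one bounding cell in $\mathcal{E}$. If both bounding cells $\xi,\eta$ lie outside $\mathcal{E}$, then $\log|xy|=\log|x|+\log|y|\geq\tfrac{1}{\kappa}\bigl(F_{e}(\xi)+F_{e}(\eta)\bigr)$, so $|xy|^{-s}\leq\exp\!\bigl(-\tfrac{s}{\kappa}(F_{e}(\xi)+F_{e}(\eta))\bigr)$. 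If exactly one bounding cell, say $\eta$, lies in $\mathcal{E}$, put $C:=\max_{Y\in\mathcal{E}}\bigl|\log|\phi(Y)|\bigr|<\infty$; then $\log|xy|\geq\tfrac{1}{\kappa}F_{e}(\xi)-C\geq\tfrac{1}{2\kappa}F_{e}(\xi)$ whenever $F_{e}(\xi)\geq2\kappa C$. Only finitely many $3$-cells $\xi$ violate this (again by \eqref{fibtot}), so after moving the corresponding finitely many $2$-cells into $\mathcal{F}$ we may assume $|xy|^{-s}\leq\exp\!\bigl(-\tfrac{s}{2\kappa}F_{e}(\xi)\bigr)$ on the rest.

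It then remains to sum the two exponential tails, for which I would use the elementary bound $S_{t}:=\sum_{X\in\Omega^{3}}e^{-tF_{e}(X)}\leq\sum_{n\geq1}4J_{2}(n)\,e^{-tn}<\infty$, valid for every $t>0$ because $J_{2}(n)=O(n^{2})$; this follows at once from the level-set bound \eqref{fibtot}. The $2$-cells with exactly one end in $\mathcal{E}$ then contribute at most $\sum_{\eta\in\mathcal{E}}\sum_{\xi\in\Omega^{3}}\exp(-\tfrac{s}{2\kappa}F_{e}(\xi))=|\mathcal{E}|\,S_{s/2\kappa}<\infty$, and the $2$-cells with both ends outside $\mathcal{E}$ contribute at most $\tfrac{1}{2}\sum_{\xi\in\Omega^{3}}e^{-\frac{s}{\kappa}F_{e}(\xi)}\Bigl(\sum_{\eta\ \mathrm{adjacent\ to}\ \xi}e^{-\frac{s}{\kappa}F_{e}(\eta)}\Bigr)\leq\tfrac{1}{2}\,S_{s/\kappa}^{2}<\infty$, the factor $\tfrac12$ compensating for counting each unordered pair twice. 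Adding the three bounded pieces proves the lemma. I expect the main obstacle to be the bookkeeping around $2$-cells meeting the finite exceptional set $\mathcal{E}$: a single $3$-cell lies on infinitely many $2$-cells, so (unlike for the $3$-cell sum of Lemma~\ref{th:sumcon}) these cannot simply be discarded; one must see that the \emph{other} bounding cell still has $F_{e}\to\infty$ and its $\log|\phi|$ swamps the bounded contribution of the exceptional cell. The secondary point requiring care is the finiteness of $\Omega^{3}_{\phi}(k)$, which genuinely needs Theorem~\ref{converge} and not merely the definition of $\Phi_{BQ}$.
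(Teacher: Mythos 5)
Your proof is correct and follows essentially the same route as the paper's: the paper also reduces the $2$-cell sum to (the square of) a convergent $3$-cell sum, with convergence coming from the lower Fibonacci bound of Theorem~\ref{converge} together with the level-set count \eqref{fibtot}. The only difference is that you spell out the bookkeeping around the finite exceptional set and the vanishing/smallness of $\phi$, which the paper's three-line argument ($\sum_{\{\xi\}}|x|^{-s/2}<\infty$, then square) leaves implicit.
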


\begin{proof}
We see from Theorem~\ref{converge} that the following series converges (absolutely):
\[
\sum_{\{\xi\}\in\Omega^3}\left|\log|x|\right|^{-3}<\infty.
\]
Hence, the following series converges:
\[
\sum_{\{\xi\}\in\Omega^3}|x|^{-\frac{s}{2}}<\infty.
\]
Squaring this series, we obtain an absolutely convergent series that's strictly greater than our desired quantity.
\end{proof}

Before we state and prove theorem~\ref{th:main}, we introduce one more piece of notation. Given a subset $E$ consisting of edges in the 1-skeleton of $\Omega$, we define:
\[
\Omega^2(E):=\left\{\{\xi,\eta\}\in \Omega^2\mid\{\xi,\eta\}\text{ contains an edge in }E\right\}.
\]

\begin{thm}\label{th:bqmcshane}
If $\phi\in\Phi_{BQ}$, then
\begin{align}
\sum_{\{\alpha,\beta\}\in\Omega^2} h(ab)=\frac{1}{2}.
\end{align}
\end{thm}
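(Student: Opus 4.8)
The plan is to follow Bowditch's strategy via Lemma~\ref{equals1}: exhaust the $1$-skeleton of $\Omega$ by finite subtrees $T$, and show that in the limit the decomposition $\sum_{\vec{e}\in C(T)}\Psi(\vec{e})=1$ concentrates on the $2$-cells, each $\{\alpha,\beta\}\in\Omega^2$ contributing exactly $2h(ab)$. By Theorem~\ref{converge} (and its proof, via Lemma~\ref{ray} and Lemma~\ref{th:<2}), there is a finite subtree $T_0$ of the $1$-skeleton containing every sink and saddle and such that every vertex outside $T_0$ is a funnel whose unique outgoing oriented edge points towards $T_0$. Fix an increasing exhaustion $T_0\subseteq T_1\subseteq T_2\subseteq\cdots$ by finite subtrees such that $T_i$ contains every edge $\{\alpha,\beta,\gamma\}\in\Omega^1$ all three of whose $3$-cells lie in $\Omega^3_\phi(R_i)$, for some sequence $R_i\to\infty$; each $T_i$ is finite because $\Omega^3_\phi(R_i)$ is. If $\vec{e}\in C(T_i)$, its tail vertex lies outside $T_0$, hence is a funnel, and the unique edge at that vertex pointing towards $T_i$ --- namely $\vec{e}$ --- is its outgoing edge; so $\vec{e}$ is oriented along its natural direction, $\Psi(\vec{e})=h(e)$, and the edge $e$ carries a trace exceeding $R_i$ on at least one $3$-cell (else $e\in T_i$). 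Lemma~\ref{equals1} then gives $\sum_{e\in C(T_i)}h(e)=1$ for all $i$.

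It remains to show $\lim_i\sum_{e\in C(T_i)}h(e)=\sum_{\{\alpha,\beta\}\in\Omega^2}2h(ab)$. Since $\phi\in\Phi_{BQ}$, every $2$-cell $E=\{\alpha,\beta\}$ has $ab\notin[0,4]$, so by Lemma~\ref{growth} the traces $c_n$ on the third $3$-cells along the bi-infinite boundary path of $E$ grow exponentially in both directions; hence for large $i$ the intersection $\partial E\cap T_i$ is a finite central sub-path and $\partial E$ leaves $T_i$ through exactly two edges $e_E^{\pm}$, each carrying a trace $c$ on its third cell that tends to $\infty$ with $i$. Since $\tfrac{abc}{a+b+c}\to ab$ and $h$ is continuous on $\mathbb{C}\setminus[0,4]$, we get $h(e_E^{\pm})\to h(ab)$ (equivalently, $\Psi$ of the naturally oriented $e_E^{\pm}$ tends to $\tfrac{1}{1+\lambda}=h(ab)$ by the difference-equation computation preceding Lemma~\ref{equals1}). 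Conversely, an edge of $C(T_i)$ with exactly one large $3$-cell is precisely an exit edge $e_E^{\pm}$ of the unique captured $2$-cell $E$ formed by its two small cells, while an edge with two or more large $3$-cells satisfies $|h(e)|\lesssim|xy|^{-1}$ for a $2$-cell $\{\xi,\eta\}$ with $|xy|>R_i$; since each small edge of $T_i$ is adjacent to only boundedly many such configurations and $\sum_{\{\xi,\eta\}\in\Omega^2}|xy|^{-s}<\infty$ for every $s>0$ by Lemma~\ref{converge2}, the total of such terms is $o(1)$. Therefore
\begin{align*}
1=\sum_{e\in C(T_i)}h(e)=\sum_{\substack{E\in\Omega^2\\ E\text{ captured by }T_i}}\bigl(h(e_E^-)+h(e_E^+)\bigr)+o(1),
\end{align*}
and since $|h(e_E^{\pm})|\lesssim|ab|^{-1}$ uniformly in $i$ with $\sum_{\Omega^2}|ab|^{-1}<\infty$ (Lemma~\ref{converge2}), dominated convergence lets us pass to the limit to obtain $1=\sum_{\{\alpha,\beta\}\in\Omega^2}2h(ab)$. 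Dividing by $2$ proves the theorem (and hence, since $h(ab)=\tfrac{1}{1+\exp\frac12\ell_\gamma}$ for the two-sided curve $\gamma$ with $\tr\rho(\gamma)=ab-2$, also Theorem~\ref{th:main}).

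The main obstacle is the combinatorial bookkeeping in the second paragraph: one must establish simultaneously that $C(T_i)$ is, up to a remainder of vanishing total, the disjoint union over $2$-cells captured by $T_i$ of their two exit edges; that the approximation $h(e_E^{\pm})\approx h(ab)$ is uniform enough to survive the limit; and that both the as-yet-uncaptured $2$-cells and the ``doubly large'' edges of $C(T_i)$ contribute negligibly. All three rest on the super-polynomial (doubly-exponential, Fibonacci) growth of traces packaged in Lemma~\ref{converge2} and on the exponential growth of $|\phi|$ along $2$-cell boundaries from Lemma~\ref{growth}.
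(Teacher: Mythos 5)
Your proposal is correct and follows the same skeleton as the paper's proof: Lemma~\ref{equals1} applied to an exhaustion by finite subtrees containing the attracting tree $T$, the observation that each $2$-cell meets $C(T_i)$ in exactly $0$ or $2$ edges, and convergence from Lemma~\ref{converge2} to pass to the limit $1=2\sum h(ab)$. The difference is in how the error is controlled. The paper exhausts by metric neighbourhoods $B_n(T)$ and proves the uniform pointwise inequality $|h(\{\alpha,\beta,\gamma\})-h(ab)|\leq\kappa|h(ac)+h(bc)|$, so that the discrepancy between $\sum_{C(B_n(T))}h(e)$ and $2\sum_{\Omega^2(B_{n-1}(T))}h(ab)$ is bounded in one stroke by the tail $\sum h(cd)$ over the $2$-cells first encountered at stage $n$, which vanishes; your version instead uses continuity of $h$ plus dominated convergence for the exit edges and a separate count of ``doubly large'' edges. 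Both work, but your justification of that last count is the one loose spot: ``each small edge of $T_i$ is adjacent to only boundedly many such configurations'' is not the relevant multiplicity bound --- what you need (and what the paper states explicitly) is that each $2$-cell $\{\xi,\eta\}$ contains at most two edges of $C(T_i)$, because $\partial\{\xi,\eta\}\cap T_i$ is a connected subpath of a tree; with that, the doubly-large edges map at most $2$-to-$1$ into $2$-cells outside $\Omega^2_\phi(R_i^2)$ and their total is indeed $o(1)$. Similarly, your uniform domination $|h(e_E^{\pm})|\lesssim|ab|^{-1}$ needs the remark that the third cell of an exit edge satisfies $|c|\geq\max\{|a|,|b|\}$ (else the edge would lie in $T_i$), so $|\tfrac{1}{ab}+\tfrac{1}{ac}+\tfrac{1}{bc}|\leq\tfrac{3}{|ab|}$; the paper's single inequality packages both of these at once, which is what it buys over your route.
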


\begin{proof}
We first note that $h(x)$ is roughly order $O(|x|^{-1})$, and so lemma~\ref{converge2} tells us that:
\begin{align*}
\sum_{\{\alpha,\beta\}\in\Omega^2} h(ab)<\infty.
\end{align*}
Next, we prove an inequality of the following form:
\begin{align*}
|h(\{\alpha,\beta,\gamma\})-h(ab)|\leq \kappa|h(ac)+h(bc)|,
\end{align*}
where $\kappa>0$ is independent of $a,b$ and $c$. We begin by noting that
outside of a finite set of edges $\{\alpha,\beta,\gamma\}$, either $|a|\gg0,|b|\gg0$ or $|c|\gg0$. If $|a|$ or $|b|\gg0$, then:
\begin{align*}
\frac{|h(\{\alpha,\beta,\gamma\})-h(ab)|}{|h(ac)+h(bc)|}\approx1,
\end{align*}
and if $|c|\gg0$, then:
\begin{align*}
\frac{|h(\{\alpha,\beta,\gamma\})-h(ab)|}{|h(ac)+h(bc)|}\approx\frac{1}{2}\left|1+\sqrt{1-\frac{4}{ab}}\right|^{-1}<\frac{1}{2}.
\end{align*}
Therefore, we know that there exists a $\kappa$ satisfying our requirements.\newline
\newline
In the proof of Theorem~\ref{converge}, we construct a finite attracting tree $T$, outside of which every vertex is a funnel. Now, if we take $B_n(T)$ to be the distance $n$ neighbourhood of $T$ in the 1-skeleton of $\Omega$, then lemma~\ref{equals1} tells us that:
\begin{align*}
1=\sum_{\vec{e}\in C(B_n(T))}\Psi(\vec{e})=\sum_{\vec{e}\in C(B_n(T))} h(e).
\end{align*}
Given $\vec{e}=\{\alpha,\beta,\gamma;\delta'\rightarrow \delta\}\in C(B_n(T))$, and suppose that $\vec{e}$ joins directly onto the oriented edge $\{\alpha,\beta,\delta;\gamma\rightarrow \gamma'\}\in C(B_{n-1}(T))$, then of the three $2$-cells $\{\alpha,\beta\}$, $\{\alpha,\gamma\}$, $\{\beta,\gamma\}$ containing $e$, we know that $\{\alpha,\beta\}\in \Omega^2(B_{n-1}(T))$ and $\{\alpha,\gamma\}$, $\{\beta,\gamma\}\in \Omega^2_{n}(B_n(T))-\Omega^2(B_{n-1}(T))$. Hence, summing over all of $C(B_n(T))$, we obtain the following inequality:
\begin{align*}
\left|1-2\sum_{\{\alpha,\beta\}\in \Omega^2(B_{n-1}(T))}h(ab)\right|=&\left|\sum_{ C(B_n(T))}h(e)-2\sum_{\{\alpha,\beta\}\in \Omega^2(B_{n-1}(T))}h(ab)\right|\\
\leq&2\kappa\left|\sum_{\{\gamma,\delta\}\in \Omega^2(B_n(T))- \Omega^2(B_{n-1}(T))}h(cd)\right|,
\end{align*}
noting that we'd made use of the fact that any 2-cell meets either two or no edges in $C(B_n(T))$.
Then, by taking $n\rightarrow\infty$ and observing that the second term tends to $0$, we obtain that:
\[
1=2\sum_{\{\alpha,\beta\}\in\Omega^2}h(ab).
\]
\end{proof}

The McShane identity follows as a corollary:

\begin{repeatthm2}
Let $\rho$ be a quasi-Fuchsian representation of the thrice-punctured projective plane fundamental group $\pi_1(N_{1,3})$. Then,
\begin{align*}
\sum_{\gamma\in\Sim_2(N_{1,3})}\frac{1}{1+\exp{\tfrac{1}{2}\ell_{\gamma}(\rho)}}=\frac{1}{2},
\end{align*}
where the sum is over the collection $\Sim_2(N_{1,3})$ of free homotopy classes of essential, non-peripheral two-sided simple closed curves $\gamma$ on $N_{1,3}$.
\end{repeatthm2}

\begin{proof}
Given a simple closed two-sided geodesics $\gamma$, there is a unique pair of once-intersecting simple closed one-sided geodesics $\alpha,\beta$ that do not intersect $\gamma$ (and vice versa). Firstly, this bijection affords us the desired change in the summation indices. Secondly, by invoking the face relation \eqref{facerel}:
\begin{align*}
ab=e+2=2\cosh(\tfrac{1}{2}\ell_\gamma)+2,
\end{align*}
$h(ab)$ yields the desired summand.
\end{proof}

\begin{remark}
It is not yet clear to us whether Theorem~\ref{th:main} and Theorem~\ref{th:bqmcshane} are equivalent: if every BQ-Markoff map arises from a quasi-Fuchsian representation, then the two theorems are equivalent (and we'd have an algebraic characterisation for whether a representation is quasi-Fuchsian). If not, then Theorem~\ref{th:bqmcshane} is strictly stronger. It should be noted that it is still an open question whether Bowditch's original BQ-conditions characterise the quasi-Fuchsian punctured torus representations \cite{BowMar}.
\end{remark}

\subsection{Asymptotic growth of the simple length spectrum}  \label{sec:asymp}

A punctured Klein bottle $K$ has a unique two-sided simple closed curve $\alpha$, and a family $\alpha_i$, $i\in\mathbb{Z}$ of one-sided simple closed curves.  Set $A=2\cosh\frac{1}{2}\ell_{\alpha}(X)$ and $a_i=\sinh\frac{1}{2}\ell_{\alpha_i}(X)$ for a hyperbolic 1-cusped Klein bottle $X$.  A trace identity yields:
\begin{equation}  \label{eq:quad}
a_i^2+a_{i+1}^2-a_ia_{i+1}A=-1.
\end{equation}
hence:
\begin{align*}
\lambda_{\pm}:=\lim_{i\to\pm\infty}\frac{a_i}{a_{i+1}}\text{ satisfy }\lambda_{\pm}^2-A\lambda_{\pm}+1=0,\text{ and }\lambda_{\pm}=\exp(\pm \tfrac{1}{2}\ell_{\alpha}).
\end{align*}
 Thus, for $k\gg 0$, the sequence of traces for $\{\alpha_i\}$ is eventually approximated by:
\begin{align*}
\ldots,a_{\pm k},\ a_{\pm k}\exp\tfrac{1}{2}\ell_{\alpha},\ a_{\pm k}\exp\tfrac{2}{2}\ell_{\alpha},\ a_{\pm k}\exp\tfrac{3}{2}\ell_{\alpha},\ldots
\end{align*}
And since $2\arcsinh(\tfrac{1}{2}\cdot)\approx2\log(\cdot)$ for large numbers, the lengths for $\{\alpha_i\}$ eventually resemble:
\begin{align*}
\ldots,\log(a_{\pm k}),\ \ell_{\alpha}+\log(a_{\pm k}),\ 2\ell_{\alpha}+\log(a_{\pm k}),\ 3\ell_{\alpha}+\log(a_{\pm k}),\ldots
\end{align*}
We see therefore that $s_X(L)$ grows linearly,
\begin{align*}
s_X(L) \sim \eta(X)\cdot L=\eta(X)\cdot L^{\dim\mathcal{M}(K)}
\end{align*}
and \eqref{gropol} also holds for cusped Klein bottles. A natural question arises: \textit{does polynomial growth still hold for non-orientable surfaces?}\newline
\newline
We show using Markoff quads that the answer is no, for 1-sided simple closed geodesics.

\begin{repeatthm3} 
Given a quasi-Fuchsian representation $\rho$ of the thrice-punctured projective plane $N_{1,3}$,
\begin{align*}
\lim_{L\to\infty}\frac{s_\rho(L)}{L^m}>0
\end{align*}
for some $m$ satisfying $2.430<m < 2.477$.
\end{repeatthm3}

\begin{proof}
Let $\phi$ denote the BQ-Markoff map induced by $\rho$ and let $\phi_0$ denote the Markoff map corresponding to the $(4,4,4,4)$ Markoff quad. Theorem~\ref{converge} tells us that there is a positive number $\kappa\in\mathbb{R}^+$ such that for all but finitely many $X\in\Omega^3$,
\begin{align*}
\frac{1}{\kappa}\log^+|\phi_0(X)|\leq\log^+|\phi(X)|\leq\kappa\log^+|\phi_0(X)|.
\end{align*}
Since $2\log^+(\cdot)=2\log(\cdot)\approx2\arcsinh(\frac{1}{2}\cdot)$ for large inputs, this means that the growth rate of $\phi$ and $\phi_0$ are of the same order. The spectrum of $\phi_0$ consists of Markoff quads generated starting from $(4,4,4,4)$, and may be paraphrased in terms of the integral solutions of the $n=4$ Markoff-Hurwitz equation \ref{markoff-hurwitz}. This is known to be between the orders $L^{2.430}$ and $L^{2.477}$ by Baragar's work \cite{BarExp}, and the result follows.
\end{proof}

\bibliographystyle{plain}
\bibliography{Bibliography}

\begin{thebibliography}{10}

\bibitem{AkiVar}
Hirotaka Akiyoshi, Hideki Miyachi, and Makoto Sakuma.
\newblock Variations of {M}c{S}hane's identity for punctured surface groups.
\newblock In {\em Spaces of {K}leinian groups}, volume 329 of {\em London Math.
  Soc. Lecture Note Ser.}, pages 151--185. Cambridge Univ. Press, Cambridge,
  2006.

\bibitem{BarExp}
Arthur Baragar.
\newblock The exponent for the {M}arkoff-{H}urwitz equations.
\newblock {\em Pacific J. Math.}, 182(1):1--21, 1998.

\bibitem{BowPro}
B.~H. Bowditch.
\newblock A proof of {M}c{S}hane's identity via {M}arkoff triples.
\newblock {\em Bull. London Math. Soc.}, 28(1):73--78, 1996.

\bibitem{BowMar}
B.~H. Bowditch.
\newblock Markoff triples and quasi-{F}uchsian groups.
\newblock {\em Proc. London Math. Soc. (3)}, 77(3):697--736, 1998.

\bibitem{CasInt}
J.~W.~S. Cassels.
\newblock {\em An introduction to {D}iophantine approximation}.
\newblock Cambridge Tracts in Mathematics and Mathematical Physics, No. 45.
  Cambridge University Press, New York, 1957.

\bibitem{CohApp}
Harvey Cohn.
\newblock Approach to {M}arkoff's minimal forms through modular functions.
\newblock {\em Ann. of Math. (2)}, 61:1--12, 1955.

\bibitem{GolTra}
William~M. Goldman.
\newblock Trace coordinates on {F}ricke spaces of some simple hyperbolic
  surfaces.
\newblock In {\em Handbook of {T}eichm\"uller theory. {V}ol. {II}}, volume~13
  of {\em IRMA Lect. Math. Theor. Phys.}, pages 611--684. Eur. Math. Soc.,
  Z\"urich, 2009.

\bibitem{GroFil}
Mikhael Gromov.
\newblock Filling {R}iemannian manifolds.
\newblock {\em J. Differential Geom.}, 18(1):1--147, 1983.

\bibitem{HuHIde}
Hengnan Hu.
\newblock {\em Identities on hyperbolic surfaces, group actions and the
  Markoff-Hurwitz equations}.
\newblock PhD thesis, National University of Singapore, August 2013.

\bibitem{HuaMcS}
Yi~Huang.
\newblock A mcshane-type identity for closed surfaces.
\newblock {\em preprint (to appear in the Nagoya Mathematical Journal)}, 2012.
\newblock arXiv:1203.3860.

\bibitem{HuaMod}
Yi~Huang.
\newblock {\em Moduli Spaces of Surfaces}.
\newblock PhD thesis, The University of Melbourne, June 2014.

\bibitem{LuoDil}
Feng Luo and Ser~Peow Tan.
\newblock A dilogarithm identity on moduli spaces of curves.
\newblock {\em J. Differential Geom.}, 97(2):255--274, 2014.

\bibitem{MagRin}
Wilhelm Magnus.
\newblock Rings of {F}ricke characters and automorphism groups of free groups.
\newblock {\em Math. Z.}, 170(1):91--103, 1980.

\bibitem{MPTCha}
Sara Maloni, Frederick Palesi, and Ser~Peow Tan.
\newblock On the character variety of the four-holed sphere.
\newblock {\em preprint}, 2014.
\newblock arXiv:1304.5770.

\bibitem{MarSur}
Andr{\'e} Markoff.
\newblock Sur les formes quadratiques ternaires ind\'efinies.
\newblock {\em Math. Ann.}, 56(2):233--251, 1902.

\bibitem{McSRem}
Greg McShane.
\newblock {\em A remarkable identity for lengths of curves}.
\newblock PhD thesis, University of Warwick, May 1991.

\bibitem{McSSim}
Greg McShane.
\newblock Simple geodesics and a series constant over {T}eichmuller space.
\newblock {\em Invent. Math.}, 132(3):607--632, 1998.

\bibitem{MRiSim}
Greg McShane and Igor Rivin.
\newblock Simple curves on hyperbolic tori.
\newblock {\em C. R. Acad. Sci. Paris S\'er. I Math.}, 320(12):1523--1528,
  1995.

\bibitem{MirSim}
Maryam Mirzakhani.
\newblock Simple geodesics and {W}eil-{P}etersson volumes of moduli spaces of
  bordered {R}iemann surfaces.
\newblock {\em Invent. Math.}, 167(1):179--222, 2007.

\bibitem{MirGro}
Maryam Mirzakhani.
\newblock Growth of the number of simple closed geodesics on hyperbolic
  surfaces.
\newblock {\em Ann. of Math. (2)}, 168(1):97--125, 2008.

\bibitem{NorLen}
Paul Norbury.
\newblock Lengths of geodesics on non-orientable hyperbolic surfaces.
\newblock {\em Geom. Dedicata}, 134:153--176, 2008.

\bibitem{PenDec}
R.~C. Penner.
\newblock The decorated {T}eichm\"uller space of punctured surfaces.
\newblock {\em Comm. Math. Phys.}, 113(2):299--339, 1987.

\bibitem{PuPSom}
P.~M. Pu.
\newblock Some inequalities in certain nonorientable {R}iemannian manifolds.
\newblock {\em Pacific J. Math.}, 2:55--71, 1952.

\bibitem{RivSim}
Igor Rivin.
\newblock Simple curves on surfaces.
\newblock {\em Geom. Dedicata}, 87(1-3):345--360, 2001.

\bibitem{SchCom}
Martin Scharlemann.
\newblock The complex of curves on nonorientable surfaces.
\newblock {\em J. London Math. Soc. (2)}, 25(1):171--184, 1982.

\bibitem{SchCon}
Paul Schmutz.
\newblock Congruence subgroups and maximal {R}iemann surfaces.
\newblock {\em J. Geom. Anal.}, 4(2):207--218, 1994.

\bibitem{SelHar}
A.~Selberg.
\newblock Harmonic analysis and discontinuous groups in weakly symmetric
  {R}iemannian spaces with applications to {D}irichlet series.
\newblock {\em J. Indian Math. Soc. (N.S.)}, 20:47--87, 1956.

\bibitem{TWZGen}
Ser~Peow Tan, Yan~Loi Wong, and Ying Zhang.
\newblock Generalizations of {M}c{S}hane's identity to hyperbolic
  cone-surfaces.
\newblock {\em J. Differential Geom.}, 72(1):73--112, 2006.

\end{thebibliography}

\end{document}